\definecolor{MyBlue}{HTML}{210cac}
\definecolor{MyCiteColor}{HTML}{0099FF}
\definecolor{MyRed}{HTML}{3E186A}
\definecolor{Red2}{HTML}{FF6E00}
	\definecolor{oran}{HTML}{0A7F5E}
\newcommand{\eqnum}{\refstepcounter{equation}\textup{\tagform@{\theequation}}}
\newcommand\numberthis{\addtocounter{equation}{1}\tag{\theequation}}
\newtheorem{theorem}{Theorem}
\numberwithin{theorem}{section}
\newtheorem{proposition}[theorem]{Proposition}
\newtheorem{lemma}[theorem]{Lemma}
\newtheorem{conjecture}[theorem]{Conjecture}
\theoremstyle{definition}
\newtheorem{remark}[theorem]{Remark}
\newtheorem{example}[theorem]{Example}
\newcommand{\RR}{\mathbb{R}}
\newcommand{\R}{\mathbb{R}}
\newcommand{\PP}{\mathbb{P}}
\newcommand{\ZZ}{\mathbb{Z}}
 \date{}
\renewcommand{\k}{\upkappa}
\newcommand{\cc}{\mathsf{c}}
\DeclareMathOperator*{\sign}{sgn}
\DeclareMathOperator*{\im}{im}
\DeclareMathOperator*{\sgn}{sgn}
\DeclareMathOperator*{\tr}{Tr}
\title{{Multistationarity and Bistability} for Fewnomial Chemical Reaction Networks}
\author{Elisenda Feliu}
\address{Elisenda Feliu \\ Department of Mathematical Sciences \\
University of Copenhagen \\
Universitetsparken 5 \\
DK-2100 Copenhagen, Denmark}
\email{efeliu@math.ku.dk}
\author{Martin Helmer}
\address{Martin Helmer \\ Department of Mathematical Sciences \\
University of Copenhagen \\
Universitetsparken 5 \\
DK-2100 Copenhagen, Denmark}
\email{m.helmer@math.ku.dk }
\begin{document} 
\renewcommand\windowpagestuff{\rule{2cm}{2cm}}
\opencutleft

\begin{abstract} \noindent
Bistability and multistationarity are properties of reaction networks linked to switch-like responses and connected to cell memory and cell decision making.
Determining whether and when a network exhibits bistability is a hard and open mathematical problem. One successful strategy consists of analyzing small networks and deducing that some of the properties are preserved upon passage to the full network. 
Motivated by this we study chemical reaction networks with few chemical complexes. Under mass-action kinetics the steady states of these networks are described by fewnomial systems, that is polynomial systems having few distinct monomials. Such systems of polynomials are often studied in real algebraic geometry by the use of Gale dual systems.  Using this Gale duality we give precise conditions in terms of the reaction rate constants for the number and stability of the steady states of families of reaction networks with one non-flow reaction. 
\end{abstract}
\maketitle

\color{black}

\section{Introduction}

Bistability, that is, the existence of two asymptotically stable steady states together with an unstable steady state, is a key property of dynamical systems that provides an explanation of switch-like behavior in real systems. In particular, bistability, and more generally multistability, are linked to cell decision making and differentiation, explaining the coexistence of different states in cells with identical genetic material \cite{Ferrell:Waddington,laurent1999,Xiong:2003jt}.  
Although the term bistability is widespread in molecular biology, determining whether and when a mathematical model exhibits bistability is a highly nontrivial task. This task is further complicated by the high number of variables and unknown parameters present in the models. Since bistability requires multistationarity, that is, the existence of more than one steady state, much effort has been centered around the easier (but still complex) problem of determining whether and when a network displays multistationarity. 

One of the successful strategies to address multistationarity and bistability focuses on studying a smaller, but related, model, and then seeks to lift these properties to the larger full model. To effectively implement this strategy one requires a catalog of small networks that are multistationary or bistable. We contribute to this catalog through a detailed analysis of networks with one non-flow reaction. 

Specifically, 
we consider the mathematical framework in which 
\color{black}
the evolution of the concentration of species in a chemical reaction network is modeled by means of a system of polynomial ordinary differential equations (ODEs). This system has the form  
\[
\frac{dx}{dt} = f_{\kappa} (x), \qquad x\in \R^n_{\geq 0},
\]
where $x=(x_1,\dots,x_n)$ is the vector of concentrations, $\kappa\in \R^s_{>0}$ is the vector of reaction rate constants, and $f_{\kappa}(x)=[f_{\kappa,1}(x),\dots, f_{\kappa,n}(x)]^T \in (\R[x_1,\dots,x_n])^n$ is a vector of $n$ variable polynomials.
Due to $\R$-linear relations among the polynomials $f_{\kappa,i}(x)$, $i=1,\dots,n$, the dynamics of this ODE system are often confined to linear subspaces defined by relations
\begin{equation}\label{eq:conservation_laws}
 W x = c, 
\end{equation}
where $c\in \R^d$ is determined by the initial conditions and $W$ is a matrix in $\R^{d\times n}$. These equations are called conservation laws.

In this setting, the positive steady states of the network are  the elements of the set
$$ S_{\kappa,c} = \{ x\in \R^n_{>0} \mid f_\kappa(x)=0,\ Wx = c\}. $$
Multistationarity then refers to the existence of a choice of the parameters $\kappa$ and $c$ such that the set $S_{\kappa,c}$ contains at least two elements.
The question of deciding \textit{whether} a network exhibits multistationarity is essentially solved, and one can employ one of several existing methods, see for example \cite{PerezMillan,conradi-switch,crnttoolbox,feliu-bioinfo,control}. On the other hand, deciding for \emph{what} parameter values the network exhibits multistationarity, deciding \emph{how many} elements the set $S_{\kappa,c}$ can contain, and deciding whether bistability arises remain open (and hard) problems.

As already discussed, it can be fruitful to have a detailed understanding of smaller networks contained within the larger network. In particular, it has been shown that after introducing inflow/outflow reactions \cite{craciun-feinberg} or adding so-called intermediates to a reaction network \cite{feliu:intermediates}, the maximum number of elements in the sets $S_{\kappa,c}$ can only increase. Similar relations are found for subnetworks or embedded networks \cite{joshi-shiu-II}. 
Motivated by this, in \cite{Joshi} a characterization of multistationarity was given for generic reaction networks with one `arbitrary' reaction and inflow/outflow reactions for all species; however neither the actual number of positive steady states nor the existence of bistability were  determined.
 In this work we expand upon this characterization by counting the possible number of steady states when the non-flow reaction is irreversible and by exploring the parameter region with respect to the cardinality of $S_{\kappa,c}$. In particular, we determine that these networks have at most three positive steady states. Additionally we also determine which of these networks exhibit bistability.

An additional goal of this paper is to present a strategy that can enhance our understanding of multistationarity, when the number of complexes appearing in the network is small. 
In principle, using cylindrical algebraic decompositions one can determine the region of multistationarity and the number of elements of $S_{\kappa,c}$, see for example, the book \cite{basu2007algorithms}. In practice, however, computing cylindrical algebraic decompositions is often unfeasible due to the high computational cost in relation to the degree, number of variables, and number of parameters in the polynomial systems being studied. 
In light of this, case specific approaches (sometimes in conjunction with partial results from \cite{Conradi:PLOSCOMP}) are often employed.

Here we study the number of steady states by finding a \emph{Gale dual system} \cite{bihan2008gale,bihan2007new,sottile2011real}. Let $l>0$ be an integer and consider a square system of $n$ polynomial equations with $n$ variables, $n+l+1$ monomials, and having a finite number of solutions. A Gale dual system is a new system of $l$ equations in $l$ variables, together with a cone, such that the solutions of the new system in the cone are in one-to-one correspondence with the \emph{positive} solutions of the original system. Hence, it can be advantageous to pass to the Gale dual system when $l$ is small, i.e.~when the original system has few monomials. Such systems are often called \textit{fewnomial} systems.

Moving to the Gale dual system is particularly advantageous when $l=1$, as is the case for the networks in \cite{Joshi} with one non-flow irreversible  reaction. The Gale dual system in this case is a single polynomial in one variable and the constraining cone is simply an interval on the real line. In this setting classical methods such as the (generalized) Descartes' rule of signs, Sturm sequences, or real analytical techniques in $\R$, can be applied to study the solutions in terms of the coefficients (which correspond to the unknown reaction rate constants).   

Using Gale dual systems we also study a second family of networks from \cite{Joshi}, constructed similarly to the case above, but where the non-flow reaction is reversible. In this case $l=2$, but with an appropriate choice of a Gale dual system, the problem is again reduced to the study of the roots of a single variable polynomial in an interval. The resulting polynomial in this case has a more complicated structure. However, we determine the number of solutions in terms of the reaction rate constants in some special cases. Based on these results we conjecture that the maximum number of positive steady states is also three for this second family of networks. We now give an example which illustrates our approach. 

\begin{example} Consider the chemical reaction network 
$$ X_1+X_2 \ce{ ->[\ell]} 5X_1+17X_2, \qquad 0 \ce{<=>[k_1][k_2]} X_1,\qquad  0\ce{<=>[k_3][k_4]} X_2.   $$
In this case there are no conservation laws. Hence, setting $\k=[\ell,k_1,k_2,k_3,k_4]$, we have that
$S_{\k}  = \{ x\in \R^2_{>0} \mid 4 \ell x_1x_2- k_2x_1+k_1=0, \ 16\ell x_1x_2-k_4x_2+k_3=0  \}.$
The system has two variables and $n+l+1=2+1+1=4$ monomials. Hence a Gale dual system consists of a one variable polynomial and a real interval. 
A Gale dual system is 
$$ p(y)= 64 \ell^{2}y^2+(-k_2k_4+16 k_1\ell+4k_3 \ell)y+k_1 k_3,\qquad y>0. $$
The number of solutions to this system agrees with the number of elements in $S_\k$. Since $p(y)$ has degree $2$, there are at most two solutions. 
From a Sturm sequence for $p(y)$ we obtain the set of constraints which  $k_1,k_2,k_3,k_4,\ell \in \RR_{>0}$ must satisfy for $p(y)$ to have two positive roots:
$$
k_2k_4-16k_1\ell-4k_3\ell>0,\quad  k_2^2k_4^2-(32k_1+8k_3)k_2k_4\ell+(256k_1^2-128k_1k_3+16k_3^2)\ell^2>0.
$$
The parameters $\k= [1, 33602, 15447, 35984, 8034]$ satisfy the inequalities above. 
With this choice of parameters, {and with $\omega=\sqrt {423113764748297}$}, the positive steady states are:\footnotesize\small
$$
{S_\k =  \left\lbrace \left[\frac{20749149-\omega}{82384},\frac{20617917-\omega}{10712}\right], \left[\frac{20749149+\omega}{82384},\frac{20617917+\omega}{10712}\right]\right\rbrace\cong \{[2.2,4.5],[501.5,3845] \}.}
$$\normalsize
\end{example}
The paper is organized as follows. In \S\ref{section:background} we give a brief overview of the mathematical study of chemical reaction networks, recall the notion of embedded networks, and introduce Gale dual systems. In \S\ref{sec:OneVarGaleSys} we study the family of networks from \cite{Joshi} with one irreversible non-flow reaction. In \S\ref{subsec:numberSteady_1} we give criteria for the reaction network to have zero, one, two, or three positive steady states and show that there can be no more than three steady states. In \S\ref{subsec:StabSteady_1} we investigate the stability of the steady states discussed in \S\ref{subsec:numberSteady_1}. Finally, in \S\ref{sec:reversible}, we modify the family of reaction networks discussed in \S\ref{sec:OneVarGaleSys} to include a reversible reaction and obtain results regarding the possible number and stability of steady states in certain cases. 

\section{Background}\label{section:background}

We begin this section with a brief review of some elements of chemical reaction network theory. Following this we will give an overview of the construction of Gale dual systems.

\subsection{Reaction networks and embedded networks}
Informally, a reaction network is a collection of reactions between linear combinations of species in a set $\{X_1,\dots,X_n\}$:
\begin{equation}\label{eq:reactionnetwork}
r_j\colon \sum\nolimits_{i=1}^n a_{i,j} X_i \ce{->} \sum\nolimits_{i=1}^n b_{i,j} X_i,  \qquad j=1,\dots,s,
\end{equation}
such that {$a_{i,j},b_{i,j}\in \ZZ_{\geq 0}$}. A \emph{reaction rate constant} $k_j>0$ is associated with each reaction, and is typically written as a label of the reaction. We let $x_i$ denote the concentration of $X_i$. The concentrations of the species over time are modeled by the following system of ODEs:
\begin{equation}\label{eq:ODE}
\frac{dx_i}{dt} =f_{k,i}(x),\quad \textrm{where }\quad f_{k,i}(x)=  \sum_{j=1}^s (b_{i,j} - a_{i,j} ) k_j \prod_{\ell=1}^n x_\ell^{a_{\ell, j}},\qquad i=1,\dots,n.  
\end{equation}
This ODE system arises from the assumption of mass-action kinetics. Throughout this paper, and without further reference, we will assume that \textit{all} reaction networks are equipped with mass-action kinetics.

The positive steady states of the network \eqref{eq:reactionnetwork} are the positive solutions $x\in \R^n_{>0}$ to the polynomial system $ f_{k,i}(x)=0$, for $i=1,\dots,n$. Given a network with reactions as in \eqref{eq:reactionnetwork}, an \emph{embedded network}
is obtained by removing a subset of species. Specifically, if we fix a subset $\mathcal{X}=\{X_{i_1},\dots,X_{i_\omega}\}$ of $\{X_1,\dots,X_n\}$, the embedded network on the subset $\mathcal{X}$ has reactions 
$$
 \sum\nolimits_{\nu=1}^\omega a_{i_\nu, j} X_{i_\nu} \ce{->} \sum\nolimits_{\nu=1}^\omega b_{i_\nu, j} X_{i_\nu},
$$
for all $j$ such that the two ends of the reaction are distinct. Repeated reactions are considered only once. For more on chemical reaction network theory see, for example, \cite{feinbergnotes,gunawardena-notes}.

In this paper we study reaction networks which contain inflow reactions $0\ce{->[\k_i]} X_i$ and outflow reactions $X_i\ce{->[\cc_i]} 0$, for all $i$. 
In particular, each equation in \eqref{eq:ODE} contains a linear term $-\cc_i x_i$ and an independent term $\k_i$. 
It follows that there are no $\R$-linear relations among the $ f_{k,i}(x)$ that hold for all $k\in \R^s$. Hence these networks have no conservation laws, see \eqref{eq:conservation_laws}. For a reaction network without conservation laws, a steady state $x^*$ is said to be \emph{non-degenerate} if the Jacobian matrix of $f_k=[f_{k,1},\dots,f_{k,n}]$ evaluated at $x^*$ is non-singular. 

For reaction networks with inflow and outflow reactions, we can apply the results in \cite{joshi-shiu-II} which relate the possible numbers of positive steady states of the given network with the possible numbers of steady states for the embedded networks. 
 We  state a simplified version of Theorem 4.2 in  \cite{joshi-shiu-II} for the case where the network has mass-action kinetics (the original theorem of \cite{joshi-shiu-II} is valid in greater generality). See the recent results in  \cite{Shiu-Wolff} for relaxing the assumption on non-degeneracy of the steady states.

\begin{proposition}[{\color{MyCiteColor} Theorem 4.2} \cite{joshi-shiu-II}]
\label{prop:embedded}
Let $N$ be a reaction network with inflow and outflow reactions for all its species, and let $N'$ be an embedded network.  If for some choice of reaction rate constants, $N'$ admits $L$ positive non-degenerate steady states, 
then there exist reaction rate constants such that $N$ also admits $L$ positive non-degenerate steady states. Further, if among the $L$ positive non-degenerate steady states of $N'$, $R$ are asymptotically stable and $L-R$ unstable, then this is also the case for $N$, for some choice of reaction rate constants.
\end{proposition}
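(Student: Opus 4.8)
The plan is to realize the embedded network $N'$ as a singular limit of the full network $N$, in which the removed species become ``fast'' variables that are dynamically slaved to fixed positive values, so that the slow dynamics on the kept species reproduces exactly the dynamics of $N'$. Write $I$ for the set of kept species and $J$ for the removed ones, and fix rate constants of $N'$ realizing the $L$ non-degenerate steady states $x^{*(1)},\dots,x^{*(L)}\in\R_{>0}^{|I|}$, of which $R$ are asymptotically stable. I would first record the structural simplification coming from the hypotheses: because both $N$ and $N'$ carry all inflow and outflow reactions, neither has conservation laws, so for any non-degenerate steady state the notions of non-degeneracy and of (asymptotic) stability versus instability are governed entirely by the eigenvalues of the full $n\times n$ Jacobian of $f$ (all real parts negative gives asymptotic stability, some positive gives instability). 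This reduces the whole proposition to tracking a Jacobian and a solution set through a one-parameter deformation.

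Next I would build a one-parameter family of rate constants for $N$, indexed by small $\delta>0$. For $i\in I$ I keep the inflow constant, the outflow constant $\cc_i$, and the kept-species steady state function exactly as in $N'$. For each removed species $j\in J$ I fix a target value $\beta_j>0$ and set the outflow rate $\cc_j=1/\delta$ and the inflow rate $\kappa_j=\beta_j/\delta$, which makes $X_j$ fast. To each non-flow reaction of $N$ I assign a positive constant obtained by dividing the rate of the corresponding reaction of $N'$ by the monomial $\prod_{j\in J}\beta_j^{a_{j,\cdot}}$ in the target values, splitting positively among the finitely many reactions of $N$ that collapse to the same reaction of $N'$ and assigning arbitrary positive constants to reactions that become trivial in $N'$ (these do not contribute to the kept-species equations). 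With this choice the block of steady state equations indexed by $J$ has $x_J$-Jacobian $-\tfrac1\delta\,\mathrm{Id}+O(1)$, so by the implicit function theorem it solves as $x_J=G(x_I;\delta)$ with $G\to(\beta_j)_{j\in J}$ uniformly with its derivatives on compact sets as $\delta\to0$. Substituting back, the reduced kept-species function $F(x_I;\delta):=f_I\bigl(x_I,G(x_I;\delta)\bigr)$ converges to the steady state function of $N'$. Since each $x^{*(m)}$ is a non-degenerate, hence isolated, zero of the limit, a continuity/degree argument produces, for all small $\delta$ and with a \emph{single} choice of constants, $L$ distinct non-degenerate zeros $x_I^{(m)}(\delta)\to x^{*(m)}$, which lift to $L$ distinct non-degenerate positive steady states of $N$; this proves the first assertion.

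For the stability count I would analyze the full Jacobian at each lifted steady state as a $2\times2$ block matrix with blocks $A,B,C,D$ for the $(I,J)$ splitting. The scaling yields $A\to J_{N'}(x^{*(m)})$, bounded $B,C=O(1)$, and $D=-\tfrac1\delta\,\mathrm{Id}+O(1)$. As $\delta\to0$ the $|J|$ ``fast'' eigenvalues tend to $-\infty$, contributing only asymptotically stable directions, while the $|I|$ ``slow'' eigenvalues converge to those of the Schur complement $A-BD^{-1}C\to J_{N'}(x^{*(m)})$, which is precisely the Jacobian of the reduced map $F$. Hence each lifted steady state inherits exactly the signs of the real parts of the eigenvalues of $J_{N'}(x^{*(m)})$, so the $R$ asymptotically stable and $L-R$ unstable states of $N'$ lift to $R$ asymptotically stable and $L-R$ unstable states of $N$, as claimed.

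The main obstacle is making a \emph{single} parameter choice work for all $L$ steady states simultaneously: one cannot simply freeze the removed species at fixed concentrations, because the removed-species equations cannot be satisfied at $L$ different kept-species configurations at once, which is exactly why the argument must slave them dynamically through the fast outflow/inflow scaling and the implicit function theorem rather than literally fix them. The delicate points are therefore the uniform-in-$\delta$ control of the reduction $G$ and the singular-perturbation eigenvalue splitting via the Schur complement, which are what transfer both non-degeneracy and the precise stable/unstable count; by comparison, the bookkeeping for collapsing and trivial reactions and the positivity of all constructed rate constants is routine.
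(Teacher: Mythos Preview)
The paper does not prove this proposition; it is quoted as Theorem~4.2 of \cite{joshi-shiu-II} and used as a black box. There is therefore no proof in the paper to compare against.

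That said, your outline is a correct and standard singular-perturbation route to this kind of lifting result, and is in the spirit of the argument in the cited reference: make the removed species fast via large inflow/outflow constants so that they are dynamically slaved to prescribed positive values $\beta_j$, choose the non-flow rate constants of $N$ so that the induced slow vector field on the kept species reproduces the vector field of $N'$ exactly in the limit, and then read off both the count of steady states (implicit function theorem on the reduced map $F$) and their stability type from the block structure of the Jacobian via the Schur complement. Your bookkeeping for reactions that collapse to the same reaction of $N'$, and for reactions that become trivial on the kept species, is also correct: the latter contribute zero to $f_I$ because $b_{i,j}-a_{i,j}=0$ for $i\in I$.

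Two points deserve a word of caution. First, your assertion that stability is ``governed entirely by the eigenvalues of the full Jacobian'' tacitly assumes the steady states of $N'$ are \emph{hyperbolic}, not merely non-degenerate in the paper's sense (nonsingular Jacobian). A non-degenerate steady state can have eigenvalues on the imaginary axis, in which case linearization does not determine asymptotic stability and eigenvalue convergence alone does not transfer it. In all the applications in this paper the relevant steady states are hyperbolic, so the issue is moot in context, but the hypothesis should be made explicit in a standalone proof. Second, the splitting of the spectrum into $|J|$ fast eigenvalues near $-1/\delta$ and $|I|$ slow eigenvalues near those of the Schur complement $A-BD^{-1}C$ is correct but is itself a lemma (provable, e.g., by block triangularization or a Rouch\'e argument on the characteristic polynomial); it is the step where the stability transfer actually happens, so it is worth naming the result you are invoking rather than treating it as evident.
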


Another result of \cite{joshi-shiu-II} which we employ in \S \ref{sec:reversible} concerns the process of making an irreversible reaction (i.e.~a reaction $y\rightarrow y'$ for which $y'\rightarrow y$ is not in the network) into a reversible reaction by adding the missing reverse reaction. Again we state a simplified version of the result here.

\begin{proposition}[{\color{MyCiteColor} Theorem 3.1} \cite{joshi-shiu-II}]
\label{prop:subnets}
Let $N'$ be a reaction network containing an irreversible reaction $y\rightarrow y'$ and let $N$ be the network obtained by adding $y'\rightarrow y$ to $N'$.  
If for some choice of reaction rate constants, $N'$ admits $L$ positive non-degenerate steady states, 
then there exist reaction rate constants such that $N$ also admits $L$ positive non-degenerate steady states.
Further, if among the $L$  positive non-degenerate steady states of $N'$,  $R$  are asymptotically stable and $L-R$ unstable, then this is also the case for $N$, for some choice of reaction rate constants.
\end{proposition}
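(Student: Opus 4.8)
The plan is to realize $N$ as a one-parameter perturbation of $N'$ and then continue each steady state by the implicit function theorem. Write the mass-action ODE of $N'$ as $\dot x = f(x)$, where the irreversible reaction $y\to y'$ appears as one summand $k^*\,x^{y}\,\nu^*$ with reaction vector $\nu^* = y'-y$ and $x^y=\prod_\ell x_\ell^{y_\ell}$. Adding the reverse reaction $y'\to y$ with a new rate constant $\lambda>0$ contributes the single extra term $\lambda\,x^{y'}(y-y') = -\lambda\,x^{y'}\nu^*$, so that the ODE of $N$ is
$$ \dot x = F(x,\lambda) := f(x) - \lambda\, x^{y'}\,\nu^*, \qquad F(x,0)=f(x), $$
which is polynomial, hence smooth, in $(x,\lambda)$. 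The crucial structural observation is that the new reaction vector $-\nu^*$ already lies in the stoichiometric subspace $S$ of $N'$; therefore $N$ and $N'$ share the same $S$, the same conservation laws $Wx=c$, and the same notion of non-degeneracy (nonsingularity of the Jacobian restricted to $S$).

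Next I would continue the steady states. Let $x_1^*,\dots,x_L^*$ be the $L$ positive non-degenerate steady states of $N'$ for a choice of rate constants $k'$. In the setting relevant to this paper there are no conservation laws, so non-degeneracy means the Jacobian $\partial_x F(x_i^*,0)=Jf(x_i^*)$ is invertible; the implicit function theorem then produces, for each $i$, a smooth branch $\lambda \mapsto x_i^*(\lambda)$ with $F(x_i^*(\lambda),\lambda)=0$ and $x_i^*(0)=x_i^*$, defined on some $[0,\varepsilon_i)$. Choosing $\lambda>0$ smaller than all $\varepsilon_i$ and small enough that every $x_i^*(\lambda)$ is still positive, the branches remain pairwise distinct (their limits $x_i^*$ are distinct) and still non-degenerate (the Jacobian determinant is continuous and nonzero at $\lambda=0$). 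Thus $N$, with rate constants $k'$ augmented by $\lambda$, has the required $L$ positive non-degenerate steady states. When conservation laws are present one runs the same argument on a single stoichiometric compatibility class through each $x_i^*$, using that $S$ is unchanged so that the reduced system is well defined.

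For the stability statement I would appeal to continuity of eigenvalues. The stability type of $x_i^*(\lambda)$ within its compatibility class is governed by the reduced Jacobian $A_i(\lambda) = \partial_x F(x_i^*(\lambda),\lambda)|_{S}$, which depends continuously on $\lambda$. If $x_i^*$ is asymptotically stable with $A_i(0)$ a Hurwitz matrix, then all eigenvalues of $A_i(0)$ have negative real part; if $x_i^*$ is unstable then some eigenvalue has positive real part. In either case, for $\lambda$ small the eigenvalues stay strictly in the same open half-plane, so the stability type is preserved and the split into $R$ stable and $L-R$ unstable carries over to $N$.

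The main obstacle is precisely this last point. Non-degeneracy guarantees that $A_i(0)$ is nonsingular, but it does not exclude purely imaginary eigenvalues, and asymptotic stability alone permits them (with stability then due to nonlinear terms on a center manifold); at such a steady state stability need not survive perturbation. The continuity-of-eigenvalues argument is therefore only immediate when the steady state is hyperbolic, that is, when $A_i(0)$ has no eigenvalue on the imaginary axis, so that asymptotic stability coincides with linear stability. I would resolve this either by restricting attention to hyperbolic steady states, which is the relevant case in the applications of this proposition, or by arguing that the marginal, non-hyperbolic configurations can be perturbed away within the parameters $k'$ before the reverse reaction is added. Making this reduction rigorous is the only step that goes beyond the routine implicit-function-theorem and eigenvalue-continuity calculation.
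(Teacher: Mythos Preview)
The paper does not prove this proposition; it is quoted as Theorem~3.1 of \cite{joshi-shiu-II} and used as a black box. Your proposal therefore cannot be compared against a proof in the paper, but it is worth noting that your strategy---treat the added reverse reaction as a one-parameter perturbation $F(x,\lambda)=f(x)-\lambda x^{y'}\nu^*$, continue each non-degenerate steady state by the implicit function theorem, and preserve stability type by continuity of eigenvalues---is exactly the standard argument used for results of this kind, including in \cite{joshi-shiu-II}. Your observation that the stoichiometric subspace is unchanged (since $-\nu^*\in S$) is the key structural point.

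You are also right to flag the gap in the stability step: non-degeneracy rules out zero eigenvalues but not purely imaginary ones, so asymptotic stability of $x_i^*$ need not persist under perturbation unless $x_i^*$ is hyperbolic. In \cite{joshi-shiu-II} this is handled by working under a hyperbolicity hypothesis (their results are stated for non-degenerate steady states whose stability is determined by the linearization), and in the present paper every application of Proposition~\ref{prop:subnets} is to steady states that have been shown to be linearly asymptotically stable or linearly unstable (see Propositions~\ref{prop:stability}, \ref{prop:stability3} and the proof of Theorem~\ref{thm:stability1}). So your proposed resolution---restrict to hyperbolic steady states---matches both the source and the intended use, and no further argument is needed here.
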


\subsection{Gale Duality}\label{sec:galedual}

We now give a brief overview of the construction of Gale dual polynomial systems. Given a square system of polynomial equations this construction allows us to obtain a new \textit{Gale dual} system whose solutions are in bijective correspondence with the positive real solutions to the original system. We will see that in some cases the Gale dual system is simpler to study than the original system. For the interested reader, more details are given in Appendix \ref{app:galedual}, see also the book \cite{sottile2011real}.

Throughout this paper we will use standard multinomial notation, that is if we are working with (Laurent) polynomials in variables $x_1,\dots, x_n$ we will write the (Laurent) monomial $x_1^{w_1}\cdots x_{n}^{w_{n}}$ as $x^w$ for $w=[w_1,\dots, w_n]^T\in \ZZ^n$. Similarly, for a matrix $W\in \ZZ^{n\times m}$ with column vectors $w^{(1)},\dots,w^{(m)}$, we will write $x^W= [x^{w^{(1)}},\dots,x^{w^{(m)}}]^T$. To construct a Gale dual system we will compute Gale duals of matrices. A matrix $Q$ is \textit{Gale dual} to a matrix $A$ if the columns of $Q$ form a basis for the kernel of $A$, so that ${\rm Col}(Q)=\ker(A)$ and $A\cdot Q=0$. If $A$ is an integer matrix we, additionally, require that the Gale dual matrix $Q$ is also an integer matrix. 

In this subsection we study the positive real solutions to a polynomial system with $n$ equations and $n$ variables:
\begin{equation}\label{eq:compInt2}
f_1(x_1,\dots,x_{n})=\cdots =f_n(x_1,\dots,x_{n})=0, \;\;\; x_i\in \RR_{>0}.
\end{equation} 
In the notation defined above the system \eqref{eq:compInt2} can be written as 
\begin{equation}\label{eq:originalsystem}
 Cx^W= \begin{bmatrix}
f_1(x_1,\dots,x_{n}) \\
\vdots\\
f_n(x_1,\dots,x_{n})
\end{bmatrix}  =0
\end{equation} 
where $x^W$ consists of the $(n+l+1)$ unique monomials (possibly including the monomial $1$ if there are constant terms) which appear in the polynomials $f_1,\dots, f_n$ defining the system \eqref{eq:compInt2}. We adopt the convention that $C$ is an $n \times (n+l+1) $ matrix and that $W$ is an $n\times (n+l+1)$ integer matrix. Further we may assume, without loss of generality, that $1$ is the last monomial in the vector $x^W$. In particular, if this were not the case, since we only consider solutions with non-zero coordinates, we could simply divide the system \eqref{eq:originalsystem} by the given last monomial to obtain a system of the same form with $1$ as the last entry in the new vector $x^W$. 

Assume that $l> 0$ and that the system \eqref{eq:originalsystem} has a finite number of solutions. This implies that $C$ and $W$ both have maximal rank $n$.  
Let $Q=[q_{i,j}]$ be an $(n+l+1)\times (l+1)$ integer matrix Gale dual to $W$ chosen so that its last column is $[0,\dots,0,1]^T\in \ker(W)$ and let $D=[d_{i,j}]$ be an $(n+l+1)\times (l+1)$ matrix Gale dual to $C$ chosen to have its last row given by the vector $[0,0,\dots,1]$. Note that the choice of the last row of $D$ can be achieved by column operations. A vector  
 $x\in \RR^n_{>0}$ is a solution to \eqref{eq:originalsystem} if and only if the vector $x^W=[x^{w^{(1)}},\dots, x^{w^{(n+l)}},1]$ belongs to $\ker(C)$, or equivalently, if and only if there exists $y_1,\dots,y_{l}$ such that 
\begin{equation}\label{eq:newsystem} x^W = D \cdot [y_1,\dots,y_{l},1]^T .
\end{equation}
{For a column vector $ z\in \RR_{>0}^{n+l+1}$ we have that}
\small\begin{align*}
x^W=z \ \textrm{for some } x \in \RR_{>0}^{n}& \quad \Leftrightarrow \quad W^T \log(x)= \log(z) \ \textrm{ for some } x \in \RR_{>0}^{n} \quad \Leftrightarrow \quad  \log(z)\in \im(W^T)\\ &
\quad \Leftrightarrow \quad Q^T \log(z)=0  \quad \Leftrightarrow \quad  \log(z^{Q})=0 \quad \Leftrightarrow \quad  
z^{Q} = [1,1,\dots,1]^T.
\end{align*}\normalsize
{Since $W$ has full rank, $z$ and $x$ determine each other from the equation $x^W=z$. }
Using this  with $z=D\cdot [y_1,\dots,y_{l},1]^T$ in \eqref{eq:newsystem},  
 positive solutions to \eqref{eq:originalsystem} are in one-to-one correspondence with vectors $[y_1,\dots,y_{l}]\in \RR^{l}$ which satisfy the $l+1$ equations
 \begin{equation}\label{eq:galedual1}
\prod_{i=1}^{n+l+1} d_i(y)^{q_{i,1}} = 1 , \quad  \dots \quad  \prod_{i=1}^{n+l+1} d_i(y)^{q_{i,l+1}} = 1 ,\;\;\; {\rm such\; that}\; \; d_i(y)>0,
\end{equation}where the $d_i(y)$ are the linear forms in $\R[y_1,\dots,y_{l}]$ defined by the rows of $D\cdot [y_1,\dots,y_{l},1]^T$. That is,\begin{equation}\label{eq:di}
d_i(y):= (D\cdot [y_1,\dots,y_{l},1]^T  )_i = d_{i,l+1} + \sum_{r=1}^{l} d_{i,r} y_r, \;\;\; {\rm for\;}i=1,\dots, n+l+1.
\end{equation} Note that $d_{n+l+1}(y)=1$ by our choice of the last row of $D$ and that the last equation in \eqref{eq:galedual1} is simply $1=1$ by our choice of the last column of $Q$. Hence the solutions of the system \eqref{eq:originalsystem} are in one-to-one correspondence with solutions of the system of $l$ equations given by \begin{equation} \label{eq:galedual}
\prod_{i=1}^{n+l} d_i(y)^{q_{i,1}} = 1 ,\quad  \dots \quad  \prod_{i=1}^{n+l} d_i(y)^{q_{i,l}} = 1 ,\;\;\; {\rm such\; that}\; \; d_i(y)>0.
\end{equation} Additionally, this correspondence preserves the \emph{scheme} structure of the two systems, and in particular the multiplicity of the solutions is preserved. For details see Appendix \ref{app:galedual}. We summarize this in the following proposition.

\begin{proposition}\label{Prop:GaleDual2}
Let $l>0$, $w^{(i)}\in \ZZ^n$, and define the matrix $W=[{w^{(1)}},\dots, {w^{(n+l)}},0]$. Consider the system of $n$ (Laurent) polynomials in variables $x_1,\dots, x_n$ given by 
 $$ C\cdot x^W  =0,
$$
where $C\in \RR^{n\times (n+l+1)}$. Let $D\in \R^{{(n+l+1)}\times n}$ be a matrix with last row $[0,\dots,0,1]$ which is Gale dual to $C$ and let $Q=[q_{i,j}]\in \ZZ^{{(n+l+1)}\times n}$ be a matrix with last column $[0,\dots,0,1]^T$ which is Gale dual to $W$. There is a one-to-one, {multiplicity preserving}, correspondence between the set of solutions $x\in \RR_{>0}^{n}$ to $C \cdot x^W=0$ and the set of solutions to the system of $l$ equations
$$ \prod_{i=1}^{n+l} d_i(y)^{q_{i,1}} = 1,\quad  \dots \quad  \prod_{i=1}^{n+l} d_i(y)^{q_{i,l}} = 1 ,\;\;\; {\rm such\; that}\; \; d_i(y)>0 $$
in the variables $y_1,\dots,y_{l}$, where $d_i(y)$ is as in \eqref{eq:di}.
\end{proposition}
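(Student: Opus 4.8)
The plan is to realize the claimed bijection as a composition of three elementary correspondences, exactly following the chain of equivalences displayed just before the statement, and then to treat the multiplicity assertion as a separate, more delicate point.

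First I would identify the positive solutions of $C\cdot x^W=0$ with a distinguished parametrized subset of $\ker(C)$. Since $C$ has full rank $n$ and $D$ is Gale dual to $C$, the columns of $D$ form a basis of $\ker(C)$, so $\ker(C)=\mathrm{Col}(D)$. Any positive solution $x$ yields the vector $x^W$, whose last coordinate is $x^0=1$; conversely an arbitrary element $z=D v$ of $\ker(C)$ has last coordinate $v_{l+1}$, because the last row of $D$ is $[0,\dots,0,1]$. Imposing the normalization that the last coordinate equal $1$ forces $v_{l+1}=1$, so the admissible kernel vectors are precisely $z=D\cdot[y_1,\dots,y_l,1]^T$, whose $i$-th coordinate is the affine-linear form $d_i(y)$ of \eqref{eq:di}. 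This rewrites $z$ as a free parametrization by $y\in\R^l$.

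Second I would characterize which positive vectors $z$ actually arise as $x^W$ for some $x\in\R^n_{>0}$. Taking coordinatewise logarithms turns $x^W=z$ into $W^T\log(x)=\log(z)$, so such an $x$ exists iff $\log(z)\in\im(W^T)$, the row space of $W$. Because the columns of $Q$ span $\ker(W)$, the orthogonal complement of $\im(W^T)$, this happens iff $Q^T\log(z)=0$, which on exponentiating reads $z^Q=[1,\dots,1]^T$. Moreover, since $W$ has full rank $n$ the linear map $W^T$ is injective, so $\log(z)$ determines $\log(x)$ and hence $x$ uniquely; this upgrades the construction from a surjection to a genuine bijection. Substituting $z_i=d_i(y)$ into $z^Q=[1,\dots,1]^T$ together with the positivity constraints $d_i(y)>0$ (which record that $z$, equivalently the recovered $x$, is real and positive) yields the $l+1$ equations $\prod_{i=1}^{n+l+1}d_i(y)^{q_{i,j}}=1$. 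Here two normalizations remove one equation: the last row of $D$ gives $d_{n+l+1}(y)\equiv1$, so the product truncates at $i=n+l$, and the last column of $Q$ being $[0,\dots,0,1]^T$ makes the $(l+1)$-th equation the vacuous identity $1=1$, which is discarded. What survives is exactly the system of $l$ equations in the statement.

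The one genuinely delicate point is that this correspondence is \emph{multiplicity preserving}, since the steps above only give a set-theoretic bijection on the positive locus. The intended argument is that $x\mapsto z=x^W$ is an analytic isomorphism of $\R^n_{>0}$ onto its image, its differential being controlled by the full-rank matrix $W$, while $z\leftrightarrow y$ is an affine isomorphism; the composite therefore identifies the two solution schemes locally and matches the multiplicities of corresponding solutions. I expect the scheme-theoretic bookkeeping behind this last claim to be the main obstacle, and I would defer its careful verification to Appendix~\ref{app:galedual}, where the paper locates these details, since the set-level bijection itself is a direct assembly of the linear-algebra facts established above.
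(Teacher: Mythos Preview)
Your proposal is correct and follows essentially the same approach as the paper: the set-theoretic bijection is obtained exactly via the chain of equivalences displayed in \S\ref{sec:galedual} (parametrize $\ker(C)$ by $D$, normalize using the last row of $D$, characterize $\im(x\mapsto x^W)$ via $Q^T\log(z)=0$, and discard the trivial $(l{+}1)$-st equation using the last column of $Q$), while the multiplicity-preservation is handled separately in Appendix~\ref{app:galedual} through the scheme-theoretic isomorphism $\psi_{\mathcal V}^{-1}\circ\varphi_W$.
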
  

We apply Gale duality to systems of polynomial equations defining the steady states of a network. The fact that the Gale dual system preserves the multiplicity of solutions implies that the solutions to the Gale dual system of multiplicity one correspond to non-degenerate steady states.

\section{Networks with one non-flow irreversible reaction}\label{sec:OneVarGaleSys}
In this section we apply Gale duality to study the number of positive steady states of a particular family of reaction networks, originally introduced in \cite{Joshi} (see also \cite{joshi-shiu-II, joshi-shiu-IV,joshi-shiu-V}). Specifically, consider a reaction network in $n$ species $X_1,\dots,X_n$ of the form 
\begin{align*}
 a_1X_1+\cdots +a_nX_n &  \ce{->[\ell]} b_1X_1+\cdots+b_nX_n \\ 
X_i &\ce{<=>[\cc_{i}][\k_{i}]} 0,\qquad i=1,\dots,n. \numberthis \label{eq:myreaction}
\end{align*}
Each of these networks consists of inflow and outflow reactions with reaction rate constants $\k_{i}$ and $\cc_i$, respectively, and a non-flow reaction with reaction rate constant $\ell$. Using the multinomial notation $x^a=x_1^{a_1}\cdots  x_n^{a_n}$, the system of ODEs \eqref{eq:ODE} is: 
\begin{align*}
\frac{dx_i}{dt} &= (b_i-a_i)\ell x^a- \cc_{i}x_i+\k_i,\qquad i=1,\dots,n.
\end{align*}
Therefore the polynomial system we are interested in studying is
\begin{equation}\label{eq:FewNomSys_Nvar}
 (b_i-a_i)\ell x^a- \cc_{i}x_i+\k_i=0,\qquad i=1,\dots,n.
\end{equation}
Note that $a_i,\; b_i$ are fixed, while the parameters $\ell,\; \cc_i$ and $ \k_i$ can vary. In \cite{Joshi} it was shown that this system has at least two positive solutions for some positive $\ell,\cc_i$ and $\k_i$ if and only if 
$$ \sum_{b_i>a_i} a_i>1. $$
We will now  use Gale duality to determine the precise number of positive solutions as well as to understand what parameter regions contain at least two solutions. Let $\sign(i)=\sign(b_i-a_i)$ denote the sign function applied to $b_i-a_i$, that is
\begin{equation}
\sign(i):= \begin{cases} 1 & \textrm{if } b_i>a_i \\ -1  & \textrm{if } a_i>b_i \\ 0 & \textrm{if } a_i=b_i. \end{cases}
\label{eq:signFunction}
\end{equation}We note that we can eliminate the dependence of \eqref{eq:FewNomSys_Nvar} on $b_1,\dots,b_n$ and $\ell$ (up to a term for the sign of $(b_i-a_i)$) by dividing the $i^{th}$ equation by the absolute value of $(b_i-a_i)\ell$ whenever $b_i\neq a_i$. 
With this in mind we may rewrite \eqref{eq:FewNomSys_Nvar} as:
\begin{equation}\label{eq:FewNomSys_Nvar_simp}
\sign(i) x^a-c_{i}x_i+k_{i}=0, \qquad i=1,\dots,n,
\end{equation}
where $c_i:=\cc_i$ and $k_i:=\k_i$ if $b_i=a_i$, and 
$$ c_i:= \frac{\cc_i}{|b_i-a_i|\ell}>0\;\;{\rm and}\;\;  k_i:= \frac{\k_i}{|b_i-a_i|\ell}>0\;\;\; {\rm whenever}\;\;b_i\neq a_i.$$
We can write \eqref{eq:FewNomSys_Nvar_simp} in the form $C \cdot x^W=0$ where 
$x^W=[x_1,\dots,x_n,x^a,1]^T$,
and
\begin{equation*}
C=\begin{bmatrix}
 -c_{1}  & \cdots & 0 & {\rm sgn}(1)& k_{1}\\
 \vdots  & \ddots & \vdots & \vdots& \vdots    \\
 0 & \cdots &-c_{n} & {\rm sgn}(n)& k_{n}\\
\end{bmatrix}\hspace{-0.1cm},  \quad 
W = \begin{bmatrix}
1  & \dots & 0 & a_1 &0 \\ 
\vdots   & \ddots & \vdots & \vdots  & \vdots\\
0 &  \dots & 1 & a_n&0
\end{bmatrix}\hspace{-0.1cm}.
\end{equation*} 
Now compute a Gale dual system to \eqref{eq:FewNomSys_Nvar_simp} using Proposition \ref{Prop:GaleDual2} (see also \S\ref{sec:galedual}). In the notation of Proposition \ref{Prop:GaleDual2} applied to the system \eqref{eq:FewNomSys_Nvar_simp}, we have that $l=1$ and hence any Gale dual system depends on one variable $y$. One can easily check that the following choices of Gale dual matrices $ D\in \RR^{(n+2)\times 2}$ and $Q\in \ZZ^{(n+2)\times 2}$ satisfy the requirements of Proposition \ref{Prop:GaleDual2}:
\begin{equation}
D=\begin{bmatrix}
 \frac{{\rm sgn}(1)}{c_{1}}&\frac{k_{1}}{c_{1}} \\
\vdots & \vdots \\
\frac{{\rm sgn}(n)}{c_{n}}& \frac{k_{n}}{c_{n}}\\
 1 &0 \\
0 &1
\end{bmatrix},\;\;\;\;Q=\begin{bmatrix}
a_1  &0\\
\vdots &\vdots  \\
a_n &0 \\
-1 &0\\
0& 1
\end{bmatrix}.
\end{equation}
In the notation of \S\ref{sec:galedual}, we have
$$ d_i(y)= \tfrac{k_i}{c_i}+\tfrac{\sign(i)}{c_i}y,\quad i=1,\dots,n,\qquad d_{n+1}(y)=y.   $$
Hence the Gale dual system to \eqref{eq:FewNomSys_Nvar_simp} in $\RR[y]$ is given by 
\begin{equation}
 \tfrac{1}{y} \prod_{i=1}^n \left(\frac{{\rm sgn}(i)y +k_i}{c_i} \right)^{\hspace{-1mm}a_i}=1\;\;\;\;{\rm where}\;\; ({\rm sgn}(i)y +k_i) >0,\textrm{ for all }i\ \textrm{and}\ y>0. \label{eq:GD_FirstVer}
\end{equation}
We rewrite this as
\begin{equation}
g(y)= \prod_{i=1}^n \left({\rm sgn}(i)y +k_i \right)^{a_i}-\kappa y=0, {\rm \;\; for\;\;} y\in (0,k_-), \;\; {\rm with \;}\kappa=c^a,\label{GaleSysSimpNet}
\end{equation}
where\begin{equation}
k_-:= \begin{cases} +\infty & \textrm{if }\sign(i)\geq 0 \ \textrm{for all }i=1,\dots,n, \\
\min \{k_i \mid \sign(i)=-1\} & \textrm{otherwise.}
\end{cases}\label{eq:kminusDef}
\end{equation}
Similarily we define \begin{equation}
k_+:= \begin{cases} -\infty & \textrm{if }\sign(i)\leq 0 \ \textrm{for all }i=1,\dots,n, \\
\min \{k_i \mid \sign(i)=1\} & \textrm{otherwise.}
\end{cases}\label{eq:kplusDef}
\end{equation}

\begin{remark}
The indices $i$ in \eqref{eq:GD_FirstVer} for which $\sign(i)=0$ (that is, $b_i=a_i$) or for which $a_i=0$ play no role in the number of solutions of the equation \eqref{eq:GD_FirstVer}. {It is clear from \eqref{eq:FewNomSys_Nvar} that if $\sign(i)=0$, then $x_i= \k_i/\cc_i$ at steady state. Replacing $\ell$ with $\ell ( \k_i/\cc_i)^{a_i}$ and removing $X_i$ yields a new reaction network of the form \eqref{eq:myreaction} with $n-1$ species . In this way we can remove all species with $\sign(i)=0$ and can assume, without loss of generality, that $\sign(i)\neq 0$ for all $i$.  }
\label{remark:a_i_pos}
\end{remark}
By Proposition \ref{Prop:GaleDual2} there is a bijection between positive steady states of the network \eqref{eq:myreaction} and the roots of $g$ in the interval $(0,k_-)$. 
 Given this we study the roots of $g$ for varying values of the parameters $\kappa$, and $k_i$ for $\sign(i)\neq 0$, and deduce the possible number of positive steady states of \eqref{eq:myreaction}. Subsequently, we consider the original steady state equations and the stability of the steady states. In particular, we determine which networks have the capacity for \textit{bistability}, that is the existence of three positive steady states, two asymptotically stable and one unstable.

\subsection{Number of steady states}\label{subsec:numberSteady_1}
In light of Remark \ref{remark:a_i_pos} we may assume, without loss of generality, that $\sign(i)\neq 0$. 
By defining
 \begin{equation}
h(y)=\prod_{i=1}^n \left({\rm sgn}(i)y +k_i \right)^{a_i}, 
\end{equation} 
we may rewrite \eqref{GaleSysSimpNet} as 
\begin{equation} \label{eq:hKappaDef2}
g(y)=h(y)-\kappa y=0,\qquad{\rm where}\;\; y\in (0,k_-).
\end{equation} 
From \eqref{eq:hKappaDef2} we see that the solutions to the Gale dual system correspond to the intersection points of the polynomial $h(y)$ with the line $\kappa y$ in the interval $(0,k_-)$. Observe that the polynomial $h(y)$ has only real roots occurring at $-\sign(i)k_i$ of multiplicity $a_i$ and recall that $\kappa>0$. Let 
\begin{equation}\label{eq:signs0}
a_+:=\sum_{\sgn(i)=1} a_i,  \;\; {\rm and}\;\;  a_-:=\sum_{\sgn(i)=-1} a_i.
\end{equation}
{Our analysis of the number of roots of $g$ can be broken into three cases:
\begin{enumerate}[(i)]
\item $h$ has no negative real roots ($a_+=0$), 
\item $h$ has negative real roots but no positive real roots  ($a_+>0$ and $a_-=0$), 
\item $h$ has both positive and negative real roots ($a_+>0$ and $a_->0$). 
\end{enumerate}
}
In the first case we will see that there must be exactly one positive steady state, in the second case that there are at most two positive steady states, and in the third we will see that there are at most three positive steady states. 

\begin{remark}
In \cite{bihan2016descartes} the authors study the positive real solutions of systems of $n$ polynomial equations in $n$ variables which have $n+1+1$ monomials (i.e.~when $l=1$ in the notation of \S\ref{sec:galedual}). By \cite[Theorem~3.3]{bihan2016descartes} there are at most four positive real solutions to the system of steady state equations \eqref{eq:FewNomSys_Nvar} of the family of networks considered in this section. 
\end{remark}

We start with a simple lemma that applies to the polynomial $h(y)$; the proof is included for completeness. 

\begin{lemma}\label{lem:Thelemma}
Let $p(y)=\lambda \prod_{i=1}^n (y-\alpha_i)^{a_i}$ be an arbitrary polynomial with only real roots occurring at $\alpha_1<\dots<\alpha_n$. 
Then the derivative $p'(y)$ has only real roots; specifically there is one root of multiplicity one in each interval $(\alpha_i,\alpha_{i+1})$ for $i=1,\dots,n-1$, and the remaining roots occur at $\alpha_i$ with multiplicity $a_i-1$ for $i=1,\dots,n$.
\end{lemma}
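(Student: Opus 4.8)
The plan is to prove this classical fact about the roots of the derivative of a polynomial with only real roots. We have $p(y)=\lambda\prod_{i=1}^n(y-\alpha_i)^{a_i}$ with $\alpha_1<\cdots<\alpha_n$, so $\deg p=a_1+\cdots+a_n=:N$, and hence $\deg p'=N-1$. The strategy is to account for all $N-1$ roots of $p'$ (counted with multiplicity) by producing them explicitly: first the repeated roots coming from the $\alpha_i$, and then the simple roots guaranteed between consecutive $\alpha_i$ by Rolle's theorem. Once we exhibit $N-1$ roots counted with multiplicity, we will have found them all, which in particular forces $p'$ to have only real roots.

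First I would handle the roots at the $\alpha_i$. For each $i$ with $a_i\geq 2$, write $p(y)=(y-\alpha_i)^{a_i}q_i(y)$ where $q_i(\alpha_i)\neq 0$; differentiating via the product rule gives $p'(y)=(y-\alpha_i)^{a_i-1}\bigl(a_i q_i(y)+(y-\alpha_i)q_i'(y)\bigr)$, and evaluating the bracketed factor at $\alpha_i$ yields $a_i q_i(\alpha_i)\neq 0$. Hence $\alpha_i$ is a root of $p'$ of multiplicity exactly $a_i-1$. Summing over all $i$ accounts for $\sum_{i=1}^n(a_i-1)=N-n$ roots of $p'$.

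Next I would apply Rolle's theorem. Since $p(\alpha_i)=p(\alpha_{i+1})=0$ and $p$ is differentiable, for each $i=1,\dots,n-1$ there is at least one point $\beta_i\in(\alpha_i,\alpha_{i+1})$ with $p'(\beta_i)=0$. These $n-1$ points lie in pairwise disjoint open intervals, hence are distinct from one another and from all the $\alpha_i$. This produces $n-1$ additional roots of $p'$. Adding the two counts gives $(N-n)+(n-1)=N-1$ roots of $p'$ counted with multiplicity, which equals $\deg p'$. Therefore these are all the roots of $p'$; in particular each $\beta_i$ has multiplicity exactly one, no further real roots exist, and $p'$ has no non-real roots.

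The argument is routine, so there is no serious obstacle; the only point requiring a little care is the bookkeeping, namely verifying that the multiplicity contributions at the $\alpha_i$ are \emph{exactly} $a_i-1$ (not merely at least that) and that the Rolle roots are genuinely simple. Both follow automatically once the total multiplicity count hits $N-1$: since we cannot exceed $\deg p'$ roots, every inequality used along the way must in fact be an equality. I would close by applying this lemma to $h(y)=\prod_{i=1}^n(\sgn(i)y+k_i)^{a_i}$, whose roots are the distinct values $-\sgn(i)k_i$ (after the reduction in Remark \ref{remark:a_i_pos} ensuring $\sgn(i)\neq 0$), so that $h'$ likewise has only real roots with the stated interlacing structure.
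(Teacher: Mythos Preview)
Your proof is correct and follows essentially the same approach as the paper's: count the $N-n$ roots of $p'$ at the $\alpha_i$ with multiplicities $a_i-1$, invoke Rolle's theorem for $n-1$ further roots in the open intervals, and conclude by matching the total against $\deg p'=N-1$. Your version is in fact slightly more careful than the paper's in explicitly verifying via the product rule that the multiplicity at each $\alpha_i$ is \emph{exactly} $a_i-1$; the closing remark about $h(y)$ is extraneous to the lemma itself but harmless.
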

\begin{proof}

We know that  $p'(y)$ is a polynomial of degree $(a_1+\cdots +a_n)-1$
and that for $j=1,\dots,n$, $\alpha_j$ is a root of  $p'(y)$ of multiplicity $a_j-1$. This gives $(a_1-1)+\cdots + (a_n-1)$ roots of $p'(y)$. 
Now by Rolle's theorem, $p'(y)$ must have a root in each of the intervals $(\alpha_r,\alpha_{r+1})$, for $r=1,\dots,n-1$, since $p(\alpha_j)=0$ for all $j=1,\dots,n$.
Since there are $n-1$ such intervals, we have found all  $(a_1+\cdots +a_n)-1$ roots of $p'(y)$. 
\end{proof}

\subsubsection{No negative real roots} The first case listed above is straightforward to analyze.

\begin{theorem}
{If $a_+=0$, }then the Gale dual system \eqref{eq:hKappaDef2}, $g(y)=0$ for $y\in (0,k_-)$, has exactly one solution of multiplicity one. Equivalently, the network \eqref{eq:myreaction}  has exactly one positive non-degenerate steady state for all choices of parameters {$\ell$, $\cc_i$, and $\k_i$ for all $i$.}
\end{theorem}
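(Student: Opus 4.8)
The plan is to reduce the entire statement to elementary monotonicity of the single-variable function $g$ on $(0,k_-)$, exploiting that the hypothesis $a_+=0$ forces a rigid sign structure on the roots of $h$. By Remark~\ref{remark:a_i_pos} I may assume $\sign(i)\neq 0$ and $a_i>0$ for all $i$. Since $a_+=\sum_{\sign(i)=1}a_i=0$ with every $a_i>0$, no index has $\sign(i)=1$, so $\sign(i)=-1$ for all $i$. Consequently $h(y)=\prod_{i=1}^n(k_i-y)^{a_i}$, and by \eqref{eq:kminusDef} we have $k_-=\min_i k_i$.

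First I would pin down the sign of $g$ at the two endpoints. On $[0,k_-)$ every factor $k_i-y$ is strictly positive, so $h(y)>0$ there; in particular $g(0)=h(0)=\prod_i k_i^{a_i}>0$. At the right endpoint at least one factor $(k_{i_0}-y)^{a_{i_0}}$ with $k_{i_0}=k_-$ vanishes while the remaining factors stay nonnegative, so $h(k_-)=0$ and hence $g(k_-)=-\kappa k_-<0$, since $\kappa=c^a>0$ and $k_->0$. As $g$ is a polynomial (hence continuous up to the endpoints), the intermediate value theorem yields at least one root in $(0,k_-)$.

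Next I would establish strict monotonicity to upgrade existence to uniqueness. Logarithmic differentiation gives $h'(y)/h(y)=-\sum_{i=1}^n a_i/(k_i-y)$, which is strictly negative on $(0,k_-)$ because each $k_i-y>0$; since $h(y)>0$ there, we conclude $h'(y)<0$. Therefore $g'(y)=h'(y)-\kappa<0$ throughout $(0,k_-)$, so $g$ is strictly decreasing and can vanish at most once. Combined with the preceding paragraph, $g$ has exactly one root $y^*\in(0,k_-)$, and since $g'(y^*)<0\neq 0$ this root is simple, i.e.\ of multiplicity one.

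Finally, the statement about the network follows by transport through Proposition~\ref{Prop:GaleDual2}: the multiplicity-preserving bijection sends the unique simple root of $g$ in $(0,k_-)$ to a unique positive steady state of \eqref{eq:myreaction}, non-degenerate precisely because the root has multiplicity one. Since none of the inequalities above depend on the particular values of $\kappa$ and the $k_i$ beyond their positivity, the conclusion holds for all admissible $\ell,\cc_i,\k_i$. I do not expect a genuine obstacle here; the only points needing a little care are the endpoint analysis at $y=k_-$ (ensuring $h(k_-)=0$ even when the minimum is attained by several indices) and verifying that $g'<0$ persists across the whole open interval, so that existence and uniqueness are secured simultaneously.
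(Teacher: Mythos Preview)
Your proof is correct and follows essentially the same approach as the paper: both reduce to showing that $h$ is strictly positive and strictly decreasing on $(0,k_-)$, so that $g=h-\kappa y$ has a unique simple root there. The only cosmetic difference is that you establish $h'<0$ via logarithmic differentiation, whereas the paper invokes its Lemma~\ref{lem:Thelemma} (a Rolle-type statement) to conclude $h'$ has no root below $k_-$; your route is slightly more direct and avoids that auxiliary lemma.
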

\begin{proof}
If $a_-=0$, then $h$ is a constant function with $h(0)>0$. If $a_->0$, then $h$ has only positive real roots and $k_-$ is the smallest root. 
By Lemma \ref{lem:Thelemma} applied to $h$, $h'$ has no root smaller than $k_-$. Since $h(0)>0$ and $h(k_-)=0$, we conclude that $h$ is strictly decreasing and positive in the interval $(0,k_-)$. In both cases, $h$ must meet any line through the origin with positive slope at exactly one point (with multiplicity one) in the interval $(0,k_-)$. 
\end{proof}

\subsubsection{No positive real roots and at least one negative real root}
Consider the case $a_+> 0$ and $a_-=0$, it follows that $b_i>a_i\geq 0$ and hence, that $\sign(i)=1$ for all $i$ and $k_-=+\infty$.
From \eqref{GaleSysSimpNet}  we see immediately that $g(y)$ has no positive roots if 
$$
k^a\cdot \sum_{i=1}^n  \frac{a_i}{k_i}  \geq  \kappa,
$$ 
since in this case all non-zero coefficients of $g$ are positive. If this inequality does not hold, then by the Descartes' rule of signs, we conclude that $g(y)$ has at most two positive roots. 

\begin{theorem}
{ Suppose $a_-=0$ and $a_+>0$.}
 Then for any choice of the parameters $\kappa, \; k_i$  the Gale dual system \eqref{eq:hKappaDef2}, $g(y)=0$ for $y>0$, has 
\begin{itemize}
\item either no solution or one solution of multiplicity one, if $a_+=1$;
\item either no solution or two solutions (counted with multiplicity), if $a_+>1$.
\end{itemize}
In particular, the reaction network \eqref{eq:myreaction} admits at most two positive non-degenerate steady states  if $a_+>1$ and at most one otherwise.
\end{theorem}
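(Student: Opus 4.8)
The plan is to determine the number of positive roots of $g$ directly from the sign pattern of its coefficients, building on the observations already made before the statement. Since $a_-=0$ we have $\sign(i)=1$ for every $i$, so $h(y)=\prod_{i=1}^n (y+k_i)^{a_i}$ is a product of linear factors with strictly positive constant terms; expanding, every coefficient of $h$ is strictly positive, its constant term is $h(0)=k^a>0$, and its degree equals $a_+$. The key structural fact is that $g(y)=h(y)-\kappa y$ differs from $h$ in its linear coefficient only, which becomes $h'(0)-\kappa=k^a\sum_{i=1}^n a_i/k_i-\kappa$; all other coefficients of $g$ remain strictly positive.

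The case $a_+=1$ is immediate: then $h$, and hence $g$, is affine in $y$, so $g$ has at most one root, necessarily of multiplicity one, and checking the sign of its slope shows there is either no root or exactly one root in $(0,k_-)=(0,\infty)$. For $a_+>1$ I would split according to the sign of $h'(0)-\kappa$. If $h'(0)-\kappa\ge 0$, then every coefficient of $g$ is nonnegative while its constant term $k^a$ is strictly positive, so $g(y)>0$ for all $y>0$ and there are no solutions; this recovers the condition $k^a\sum a_i/k_i\ge\kappa$ already noted. If instead $h'(0)-\kappa<0$, then because $a_+\ge 2$ the coefficient of $y^2$ (indeed the next nonzero coefficient above the linear one) is strictly positive, so the coefficient sign sequence of $g$ reads $+,-,+,+,\dots,+$ and has exactly two sign changes. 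By Descartes' rule of signs the number of positive roots counted with multiplicity is at most two and has the same parity as two, hence is either zero or two.

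Finally I would transfer these counts to the network via Proposition \ref{Prop:GaleDual2}: the correspondence is multiplicity preserving and all roots lie in the relevant interval $(0,k_-)=(0,\infty)$, so each simple root yields a non-degenerate positive steady state of \eqref{eq:myreaction}, while a double root yields a single degenerate one. This gives at most two non-degenerate steady states when $a_+>1$ and at most one when $a_+=1$, as claimed. The only slightly delicate point is confirming that the sign sequence has \emph{exactly} two changes when $h'(0)-\kappa<0$, which needs a coefficient of degree at least two to be present and positive; this is guaranteed precisely by $a_+\ge 2$ together with the positivity of all coefficients of $h$. An alternative route that sidesteps the parity bookkeeping is to note that $\log h(y)=\sum_i a_i\log(y+k_i)$ yields $h''(y)=h(y)\big[(\sum_i a_i/(y+k_i))^2-\sum_i a_i/(y+k_i)^2\big]$, whose bracket is nonnegative on $(0,\infty)$ since each $a_i\ge 1$; thus $h$ is convex there, and $g=h-\kappa y$ is convex with $g(0)=k^a>0$ and $g(y)\to+\infty$, which forces exactly zero or two roots (counted with multiplicity).
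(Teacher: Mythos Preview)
Your argument is correct and, for the counting constraint, essentially matches the paper's. The paper also invokes Descartes' rule (in the paragraph just before the theorem) to get the upper bound of two, and then uses $g(0)>0$ together with $\lim_{y\to+\infty}g(y)=+\infty$ to force the parity; your packaging of both steps into a single application of Descartes' rule with its parity clause is equivalent.

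The one substantive difference is that the paper's proof goes further than the stated theorem: after establishing that the count is in $\{0,2\}$, it shows that two solutions are actually achieved for suitable $\kappa$, via a tangent-line construction based on the fact that $h$ is increasing and strictly convex on $(0,\infty)$ (so there is a unique tangent through the origin, and any steeper line gives two transverse intersections). You omit achievability, which is fine for the theorem as literally stated but is something the paper's proof supplies.

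Your alternative convexity argument is also valid and parallels the paper's observation that $h''>0$ on $(0,\infty)$; the paper obtains this by applying Lemma~\ref{lem:Thelemma} to $h'$, whereas you compute $h''/h$ directly via logarithmic differentiation. One small slip: the justification ``since each $a_i\ge 1$'' is not quite accurate (indices with $a_i=0$ are allowed, they just contribute trivial factors), but the inequality $a_i(a_i-1)\ge 0$ holds for all nonnegative integers, so the bracket is nonnegative regardless and your conclusion stands.
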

\begin{proof}
If $a_+=1$, then $h$ is a line that intersects the line $\kappa y$ in at most one point with multiplicity one. Since $h(0)>0$, by choosing $\kappa$ larger than the slope of $h$ the two lines intersect, and by choosing it smaller, the two lines do not intersect in $(0,+\infty)$. 

Assume now $a_+ >1$. Then $\lim_{y\to +\infty}g(y)=+\infty$ and since $g(0)>0$, then $g$ has either zero or two positive roots (with multiplicity).
The polynomial $h$ has no positive roots, and $h'$ also has no positive roots by Lemma \ref{lem:Thelemma}. Since $h(y)$ tends to $+\infty$, $h(y)$ is increasing and positive for all $y>0$. Further, by applying Lemma \ref{lem:Thelemma} to $h'$, we conclude that $h$ has no positive inflection points and $h''(y)>0$ for $y>0$. 

For any choice of $k_i$, there exists a value $\kappa'$ such that $h$ and $\kappa' y$ intersect in exactly one point, tangentially.
To see this, note that the slope of the tangent line to $h$ at a point $y$ increases towards infinity as $y$ increases. Thus for $y$ large enough, the tangent line intersects the vertical axis at a negative value. Since the tangent line at $y=0$ intersects at a positive value, namely $h(0)$, then by the continuity of $h'$, there exists $y_0>0$ such that the intercept of the tangent line is $0$. The slope of this line is $\kappa'$. Since $h''(y_0)>0$, any $\kappa>\kappa'$ gives a choice of parameters for which the two curves intersect transversely at two points. 
\end{proof}

The proof of the previous theorem is constructive. Choose $k_i$ arbitrarily and solve the equation
$ h(y_0) = h'(y_0)y_0$
for $y_0>0$. If we then choose $\kappa$ larger than  $h'(y_0)$, this gives two positive
\begin{wrapfigure}{r}{0.49\textwidth}\vspace{-0.05in}

  \begin{center}
\begin{tikzpicture}[very thick,scale=0.87, every node/.style={scale=0.89}]
\begin{axis}[
    axis lines = middle,
        ymax=450000,
        xmax=0.85,
        xmin=-0.05,
        ymin= -50000,
    ticks=none,
]
\addplot [
    domain=-0.05:0.6, 
    samples=150, 
    color=oran,smooth, very thick
]
{(x+2)^3*(x+1)^4*(x+8)^2*(x+6)^2} node[above,scale=1.2] {$h(y)$};
\addplot [
    domain=0:0.71,
    samples=150, 
    color=MyCiteColor,smooth, very thick]
    {450000*x} node[above,scale=1.2] {${\kappa y}$};
 \addplot [
    domain=0:0.75,
    samples=150, 
    color=MyRed,smooth, very thick]
    {284907.9238*x} node[above,scale=1.2] {$h'(y_0)y$};   
     \addplot[
    color=MyRed,
    only marks,
    mark=diamond*,
    mark size=1.5pt,
    ]
    coordinates {
    (0.1885498,0)
    } node[below,scale=1.2] {$y_0$};
    \addplot[
    color=Red2,
    only marks,
    mark=*,
    mark size=1.1pt,
    ]
    coordinates {
    (0.05774914590,25987.11566)
    };
     \addplot[
    color=Red2,
    only marks,
    mark=*,
    mark size=1.1pt,
    ]
    coordinates {
    (0.4703666616,211664.9977)
    };
\end{axis}
\end{tikzpicture}  \end{center}\vspace{-0.13in}
  \caption{\footnotesize To obtain two positive intersections we solve $
 h(y_0) - h'(y_0)y_0
$ for $y_0>0$ and take $\kappa>h'(y_0)$.\label{fig:ExAllPos}}\vspace{-0.21in}
\end{wrapfigure}
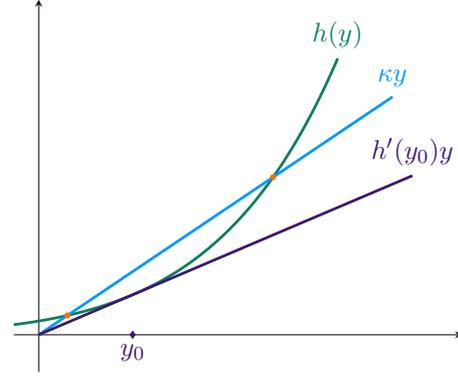\noindent
non-degenerate steady states. 
For example, let $a=[3,4,2,2]$ with $b_i>a_i$. Choose $k=[2,1,8,6]$, then $
h(y)=(y+2)^3(y+1)^4(y+8)^2(y+6)^2
$. To find a value of $\kappa$ such that the Gale dual system $g(y)=h(y)-\kappa y$ has two positive solutions, we first find the positive root of the polynomial $
 h(y_0) - h'(y_0)y_0
$. We obtain (approximately) $y_0\simeq 0.189$ and $h'(y_0)\simeq 284908$. Hence, setting $\kappa>284908$ will yield two positive solutions. If we choose $\kappa=450000$ the two positive solutions of $g(y)=0$ are (approximately) {$y\simeq 0.058$} and {$y\simeq 0.47$}, see Figure \ref{fig:ExAllPos}. 

\subsubsection{At least one negative and one positive real root}Assume that $a_+>0$ and $a_->0$, so that $h$ has both positive and negative roots. Then $k_-$ is the first positive root of $h$.
Since $g(k_-)<0$ and $g(0)>0$, we must have at least one solution and must have an odd number of solutions ({counted with multiplicity}). 
We show that in fact we have at most three solutions in this case, and that this number can be achieved for appropriate choices of $k_i$ and $\kappa$. 

\begin{theorem}
{Suppose that $a_+>0$ and $a_->0$.}
Consider the Gale dual system $g(y)=0$ for $y\in (0,k_-)$ as in \eqref{eq:hKappaDef2}. We have that:
\begin{enumerate}[(i)]
\item There are at most three solutions if $a_+>1$, and exactly one otherwise.
\item There are three solutions if and only if $h'$ has a root $\xi$ in $(0,k_-)$, $h''$ has a root $y^*$ in $(0,\xi)$, and $t_h(0)\leq 0$ where $t_h(y)$ is the tangent line to $h$ at $y^*$.
\item The polynomial $h'$ has a root $\xi$ in $(0,k_-)$ and $h''$ has a root $y^*$ in $(0,\xi)$ if and only if
$$\gamma=\sum_{j=1}^n \frac{{\rm sgn}(j)a_j}{k_{j} }>0 \quad {\rm and} \quad \theta=\left( \gamma^2-\sum_{j=1}^n \frac{a_j}{(k_{j})^2}\right)>0.$$
\item {When $a_+>1$,} there exist $k_i$ and $\kappa$ such that there are exactly three solutions of multiplicity one.
\end{enumerate}\label{thm:atMost3Sol} 
In particular,  the reaction network \eqref{eq:myreaction} admits at most three positive non-degenerate steady states if $a_+>1$ and at most one otherwise.
\end{theorem}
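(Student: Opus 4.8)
The plan is to read the roots of $g$ in $(0,k_-)$ off the geometry of the curve $y\mapsto h(y)$ and the pencil of lines $y\mapsto \kappa y$ through the origin. Since we are in case (iii) ($a_+,a_->0$, and $\sgn(i)\neq0$ by Remark \ref{remark:a_i_pos}), every factor of $h$ is positive on $(0,k_-)$, so $h>0$ there, $g(0)=h(0)=\prod_i k_i^{a_i}>0$, and $g(k_-)=-\kappa k_-<0$ because $k_-$ is the smallest positive root of $h$. The workhorse is the logarithmic derivative $L(y)=h'(y)/h(y)=\sum_i \tfrac{\sgn(i)a_i}{\sgn(i)y+k_i}$. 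One computes $L'(y)=-\sum_i \tfrac{a_i}{(\sgn(i)y+k_i)^2}<0$, so $L$ is strictly decreasing on $(0,k_-)$ from $L(0)=\gamma$ to $-\infty$ (the factor with $k_i=k_-$ blows up). Hence $h'$ vanishes on $(0,k_-)$ exactly when $\gamma>0$, at a single point $\xi$ where $h$ is maximal; when $\gamma\le0$ the function $h$ is strictly decreasing and $g'=h'-\kappa<0$, giving exactly one root. This already supplies the first half of (iii), and I would record here the two evaluations $h'(0)=h(0)\gamma$ and, from $h''=h(L^2+L')$ together with $L'(0)=-\sum_i a_i/k_i^2$, the identity $h''(0)=h(0)\theta$; these convert the analytic picture into the stated $\gamma,\theta$ inequalities.

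For the global count I would pass to $\phi(y)=h(y)/y$, whose level sets $\phi=\kappa$ are the solution set and whose critical points are the zeros of $\beta(y)=h(y)-yh'(y)$, the $y$-intercept of the tangent to $h$ at $y$. Writing $u=\log(h/(\kappa y))$ gives $u''=L'+1/y^2$, so $y^2u''=1-\sum_i a_i\big(\tfrac{y}{\sgn(i)y+k_i}\big)^{2}$; each bracketed term is positive and increasing on $(0,k_-)$, so $y^2u''$ is strictly decreasing, $u''$ changes sign once, $u'$ is unimodal, and therefore $\beta$ has at most two zeros on $(0,k_-)$. Since $\phi\to+\infty$ as $y\to0^+$ and $\phi\to0$ as $y\to k_-^-$, this bounds the number of solutions by three, which is the upper bound in (i); combined with the multiplicity-preserving bijection of Proposition \ref{Prop:GaleDual2} it yields the final "at most three positive non-degenerate steady states'' statement.

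Next I would pin down when three solutions occur. Given $\gamma>0$ one has $L(\xi)=0$, hence $h''(\xi)=h(\xi)L'(\xi)<0$, while $h''(0)$ has the sign of $\theta$; invoking real-rootedness of $h''$ (Lemma \ref{lem:Thelemma} applied twice) and interlacing of the roots of $h,h',h''$, the polynomial $h''$ has at most one root on $(0,\xi)$, so such a root $y^*$ exists iff $\theta>0$, which is exactly (iii). I would then analyze $\beta$: here $\beta(0)=h(0)>0$, $\beta'=-yh''$, and $\beta(k_-)=-k_-h'(k_-)\ge0$. When $\gamma,\theta>0$, $\beta$ falls on $(0,y^*)$ to a local minimum $\beta(y^*)=t_h(0)$ and then rises; the only interior local minimum of $\beta$ is at $y^*$ (the other inflection of $h$ on $(\xi,k_-)$ is a local maximum of $\beta$), so by the at-most-two-zeros bound together with $\beta(k_-)\ge0$ the global minimum of $\beta$ equals $\min(t_h(0),\beta(k_-))$ and $\beta$ dips below zero precisely when $t_h(0)<0$; the boundary case $t_h(0)=0$ makes $y^*$ a triple root of $g$. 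When $\beta$ has two zeros $y_1<y_2$, the values $\phi(y_1)<\phi(y_2)$ delimit the slopes $\kappa$ giving three solutions, proving (ii); if instead $\theta\le0$ or $\gamma\le0$ then $\beta\ge0$, $\phi$ is monotone, and there is a single solution. Finally, if $a_+=1$ then $\gamma>0$ forces $0<\gamma<1/k_1$, whence $\gamma^2<1/k_1^2\le\sum_i a_i/k_i^2$ and $\theta<0$; thus exactly one solution, which completes (i).

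For (iv) I would exhibit a configuration realizing all three inequalities when $a_+>1$: concentrate the positive multiplicity in one factor $(y+k_1)^{a_+}$ with $k_1$ small and take the $\sgn(i)=-1$ factors with $k_i$ large, so that $\gamma\approx a_+/k_1>0$ and $\theta\approx a_+(a_+-1)/k_1^2>0$; the resulting steep convex rise pushes the tangent at the inflection below the origin, $t_h(0)<0$, and any $\kappa\in(\phi(y_1),\phi(y_2))$ then produces three simple roots, i.e.\ three non-degenerate steady states. The step I expect to be the main obstacle is the $\beta$-analysis behind (ii): one must rule out a second excursion of $\beta$ below zero on $(\xi,k_-)$ (handled by the at-most-two-zeros bound and $\beta(k_-)\ge0$) and treat $t_h(0)=0$ correctly as a multiplicity-three root; verifying $t_h(0)<0$ in the explicit family for (iv) is the remaining delicate, but routine, computation.
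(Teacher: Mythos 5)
Your parts (i)--(iii) are correct and are obtained by a genuinely different route than the paper's. For the upper bound of three you show that $y^2\bigl(\log(h(y)/(\kappa y))\bigr)'' = 1-\sum_i a_i\bigl(\tfrac{y}{\sgn(i)y+k_i}\bigr)^2$ is strictly decreasing on $(0,k_-)$, so $u'=h'/h-1/y$ is unimodal and $u$ has at most three zeros; the paper instead argues geometrically that $h$ has at most one inflection point in $(0,\xi)$ and compares $h$ with lines through the origin (the ``property $\dagger$'' argument). Your reformulation of (ii) through the intercept function $\beta(y)=h(y)-yh'(y)$ (with $\beta'=-yh''$, $\beta(y^*)=t_h(0)$, $\beta(0)>0$, $\beta(k_-)=-k_-h'(k_-)\ge 0$) is equivalent to the paper's tangent-line comparison but packages the case analysis more systematically, and it cleanly produces the exact window $\phi(y_1)\le\kappa\le\phi(y_2)$ of slopes giving three solutions, including the boundary triple-root case $t_h(0)=0$. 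Your derivation of (iii) from $h'(0)=h(0)\gamma$, $h''(0)=h(0)\theta$ and the root count of $h''$ coincides with the paper's. Your argument that $a_+=1$ forces $\gamma<1/k_1$ and hence $\theta<0$ is a nice quantitative alternative to the paper's qualitative concavity argument for the ``exactly one'' clause of (i).

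Part (iv), however, has a genuine gap. Having proved (ii), you must \emph{exhibit} parameters with $\gamma>0$, $\theta>0$ \emph{and} $t_h(0)<0$, and the last inequality is precisely the one you do not verify; ``the steep convex rise pushes the tangent at the inflection below the origin'' is a heuristic, not a proof. Worse, in the asymptotic regime you propose ($k_1$ small and the negative-sign $k_i$ large), one has $h''/h\approx a_+(a_+-1)/(y+k_1)^2>0$ on any bounded set, so the inflection point $y^*$ escapes to infinity with the large $k_i$'s; locating $y^*$ and estimating $t_h(0)=h(y^*)\bigl(1-y^*h'(y^*)/h(y^*)\bigr)$ there is not routine and is the actual content of the claim. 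The paper sidesteps this entirely: it reduces to $n=2$ by collapsing all positive-sign $k_i$ to $k_+$ and all negative-sign $k_i$ to $k_-$, and then forces three sign changes of $g$ directly by arranging $g(0)>0$, $g(k_+)<0$, $g(2k_+)>0$, $g(k_-)<0$, which amounts to the explicit solvable inequality $\tfrac{k_--k_+}{k_--2k_+}<\bigl(\tfrac{3^{a_+}}{2^{1+a_+}}\bigr)^{1/a_-}$. You should either carry out that intermediate-value argument or honestly complete the estimate of $t_h(0)$ in your family; as written, (iv) is not proved.
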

\begin{proof}

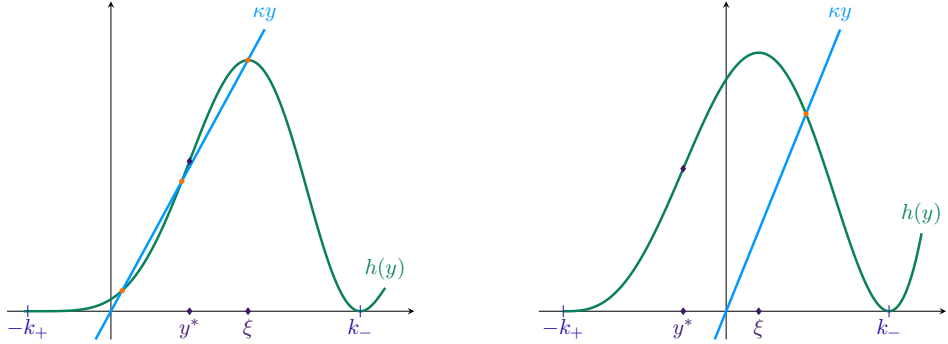
\begin{figure}[t!]\hspace{-5mm}
\begin{subfigure}[h]{0.32\linewidth}
\begin{tikzpicture}[thick,scale=0.79, every node/.style={scale=0.79}]
\begin{axis}[
    axis lines = middle,
        ymax=61000000,
        xmax=7.3,
        xmin=-2.5,
    ticks=none,
]
\addplot [
    domain=-2.1:6.6, 
    samples=150, 
    color=oran,smooth, very thick
]
{(x+4)^3*(x+2)^4*(-x+8)^2*(-x+6)^2} node[above, scale=1.2] {$h(y)$};
\addplot [
    domain=-0.37:3.7,
    samples=150, 
    color=MyCiteColor,smooth, very thick]
    {1.5*10^7*x} node[above, scale=1.2] {$\kappa y$};
    \addplot[
    color=MyBlue,
    only marks,
    mark=|,
    mark size=2.9pt,
    ]
    coordinates {
    (-2,0)
    } node[below, scale=1.2] {$-k_+$};
    \addplot[
    color=MyBlue,
    only marks,
    mark=|,
    mark size=2.9pt,
    ]
    coordinates {
    (6,0)
    } node[below, scale=1.2] {$k_-$};
        \addplot[
    color=MyRed,
    only marks,
    mark=diamond*,
    mark size=1.5pt,
    ]
    coordinates {
    (1.891359858,0)
    } node[below, scale=1.2] {$y^*$};
      \addplot[
    color=MyRed,
    only marks,
    mark=diamond*,
    mark size=1.5pt,
    ]
    coordinates {
    (3.298940941,0)
    } node[below, scale=1.2] {$\xi$};
     \addplot[
    color=MyRed,
    only marks,
    mark=diamond*,
    mark size=1.5pt,
    ]
    coordinates {
    (1.891359858,2.953503743*10^7)
    }; \addplot[
    color=Red2,
    only marks,
    mark=*,
    mark size=1pt,
    ]
    coordinates {
    (1.704785411,2.557178116*10^7)
    };
    \addplot[
    color=Red2,
    only marks,
    mark=*,
    mark size=1pt,
    ]
    coordinates {
    (.2707503407,4.061255110*10^6)
    };
    \addplot[
    color=Red2,
    only marks,
    mark=*,
    mark size=1pt,
    ]
    coordinates {
    (3.295358068,4.943037102*10^7)
    };
\end{axis}
\end{tikzpicture}
\end{subfigure}\hspace{19mm}
\begin{subfigure}[h]{0.32\linewidth}
\begin{tikzpicture}[thick,scale=0.79, every node/.style={scale=0.79}]
\begin{axis}[
    axis lines = middle,
            ymax=770,
        xmax=1.35,
        xmin=-1.15,
    ticks=none,
]
Below the red parabola is defined
\addplot [
    domain=-1:1.2, 
    samples=100, 
    color=oran,smooth, very thick
]
{(x+4)^3*(x+1)^3*(-x+1)^2*(-x+3)^2}  node[above, scale=1.2] {$h(y)$} ;
\addplot [
    domain=-0.07:0.7, 
    samples=100, 
    color=MyCiteColor,smooth, very thick
    ]
    {1000*x} node[above, scale=1.2] {$\kappa y$};
        \addplot[
    color=MyBlue,
    only marks,
    mark=|,
    mark size=2.9pt,
    ]
    coordinates {
    (-1,0)
    }node[below, scale=1.2] {$-k_+$};
            \addplot[
    color=MyBlue,
    only marks,
    mark=|,
    mark size=2.9pt,
    ]
    coordinates {(1,0)
    }node[below, scale=1.2] {$k_-$};
    
     \addplot[
    color=MyRed,
    only marks,
    mark=diamond*,
    mark size=1.5pt,
    ]
    coordinates {
    (-.2635471014,354.3040402)
    };
         \addplot[
    color=MyRed,
    only marks,
    mark=diamond*,
    mark size=1.5pt,
    ]
    coordinates {
    (-.2635471014,0)
    }node[below, scale=1.2] {$y^*$};
             \addplot[
    color=MyRed,
    only marks,
    mark=diamond*,
    mark size=1.5pt,
    ]
    coordinates {
    (0.2000000000,0)
    }node[below, scale=1.2] {$\xi$};
        \addplot[
    color=Red2,
    only marks,
    mark=*,
    mark size=1pt,
    ]
    coordinates {
    (.4903328354,490.3328354)
    };
\end{axis}
\end{tikzpicture}
\end{subfigure}\vspace{-0.15in}
\caption{\small Plots of the intersection of $h$ and the line $\kappa y$ in the interval $(-k_+,k_-)$ for $a_+>1$ and $a_->0$. Let $\xi$ be the local maximum of $h$ in the interval $(0,k_-)$. For $h$ and $\kappa y$ to meet at more than one point, $h$ must have an inflection point $y^*\in (0,\xi)$. \label{Fig:3plots}}\vspace{-0.17in}
\end{figure}
By Lemma~\ref{lem:Thelemma}, $h'$ has exactly one root $\xi$ of multiplicity one in the interval $(-k_{+},k_-)$. So $h''(\xi)\neq 0$ and $\xi$ is a local extremum. Since $h(0)>0$  and $h(k_-)=0$, it follows that $\xi$ is a local maximum, $h''(\xi)<0$, and $h$ decreases in the interval $(\xi,k_-)$. This gives that $h$ and $\kappa y$ intersect in at least one point in the interval $(0,k_-)$.

\color{black}
First we prove (i). 
Consider the case $a_+=1$. {Then $h$ has exactly one negative real root of multiplicity $1$} and $\xi$ is the smallest root of $h'$.  
By Lemma~\ref{lem:Thelemma} applied to $h'$, $h''$ has no root smaller than $\xi$, and hence $h$ has no inflection point smaller than $\xi$. Therefore, 
the function $h$ in the interval $(0,k_-)$ is positive, and either strictly decreasing, or increasing with $h''<0$ up to $\xi>0$ and afterwards decreasing. In both cases we say that $h$ has property $\dagger$; the intersection of such a function with a line through the origin $\kappa y$ is transversal and consists of exactly one point. 
Indeed, in the first case, a strictly decreasing function and a strictly increasing function intersect. In the second case, if 
 $h$ and $\kappa y$ intersect in $(0,\xi)$ then, since $h$ is concave in this region, there can be only one intersection; further there can be no intersections in $[\xi ,k_-)$ since $h$ is decreasing in this interval. Similarly if $h$ and $\kappa y$ do not intersect in $(0,\xi)$, then they must intersect in $[\xi ,k_-)$, but can intersect no more than once since $h$ is decreasing and $\kappa y$ is increasing in $(\xi, k_-)$. 

\color{black}
Now assume $a_+>1$, then by Lemma~\ref{lem:Thelemma}, $h'$ has at least one more root $\xi'$ smaller than $\xi$. Therefore, $h''$ has one root $y^*$ in the interval $(\xi',\xi)$. If $y^*\leq 0$, then $h$ again has property $\dagger$ and intersects the line through the origin $\kappa y$ in exactly one point in the interval $(0,k_-)$. 
If $y^*>0$, then a line through the origin may intersect $h$ in at most three points, and at least one. See Figure \ref{Fig:3plots}.

We now prove (ii). Assume that $\xi, y^*>0$ and consider the tangent line to $h$ at $y^*$, given by $t_h(y)=h(y^*) + h'(y^*)(y-y^*).$
Since $h'(y)$ increases for $y<y^*$ and $h'(y)$ decreases for  $y^*<y<{\xi}$, then 
 
\begin{equation}\label{eq:inequalities}
h(y)-t_h(y)>0, \quad \textrm{ for }{-k_+}<y<y^*,\quad \textrm{and}\quad h(y)-t_h(y)<0, \quad\textrm{ for }y^*<y<k_-,\end{equation}
and \color{black}
 the only intersection of the line $t_h$ with $h$ in the interval $(0,k_-)$ occurs at $y^*$. Let $L$ be any line through $(y^*,h(y^*))$ with positive slope. If the slope of $L$ is larger than $h'(y^*)$, then it intersects $h$ only at $y^*$. On the other hand if $L(y)$ has positive slope smaller than $h'(y^*)$ and if $L(0)<h(0)$, then $L$ intersects $h$ in three points. 
Therefore, if $t_h(0)\leq 0$, then by decreasing the slope of the line $L=t_h$ until $L(0)=0$, we obtain a choice of $\kappa$ such that $\kappa y$ intersects $h$ at three points. This shows the reverse implication of (ii). 

To prove the forward implication, assume that $h$ and $\kappa y$ intersect at three points in the interval $(0,k_-)$. {
Then $h$ does not have property $\dagger$, which gives that $\xi \in  (0,k_-)$ and $h''$ has a root $y^*$ in  $(0,\xi)$. 
Let $t_h(y)$ be the tangent line to $h$ at $y^*$. We show that if $t_h(0)>0$, then $h$ and $\kappa y$ intersect in one point in $(0,k_-)$, yielding a contradiction.
If the line $\kappa y$ meets $h$ in the interval $(0,y^*]$, then it also meets $t_h(y)$ in that interval, and hence $\kappa$ is greater than the slope of $t_h(y)$. Using \eqref{eq:inequalities} it follows that $\kappa y$ and $h(y)$ intersect once in $(0,k_-)$. If $h$ and $\kappa y$ only intersect in the interval $(y^*,k_-)$, then intersection can only occur in one point, since  $h$ has property $\dagger$ in $(y^*,\xi)$.} This concludes the proof of (ii).

Now we prove (iii). In the interval $(-k_+,k_-)$ we may write 
\begin{align*}
h'(y) & = h(y) \sum_{j=1}^n \frac{{\rm sgn}(j)a_j}{{\rm sgn}(j)y +k_{j} }, \\
h''(y)&=h'(y) \sum_{j=1}^n \frac{{\rm sgn}(j)a_j}{{\rm sgn}(j)y +k_{j} }-h(y) \sum_{j=1}^n \frac{a_j}{({\rm sgn}(j)y +k_{j} )^2} \numberthis\label{eq:h''ofy}\\
&=h(y) \left( \left(\sum_{j=1}^n \frac{{\rm sgn}(j)a_j}{{\rm sgn}(j)y +k_{j} }\right)^{\hspace{-0.15cm} 2}-\sum_{j=1}^n \frac{a_j}{({\rm sgn}(j)y +k_{j} )^2}\right).
\end{align*}
Since $h'>0$ in $(-k_+,\xi)$ and $h'<0$ in $(\xi,k_-)$,  $\xi$ belongs to  $(0,k_-)$ if and only if $h'(0)>0$. Using that  $h'(0)=h(0)\cdot \gamma$ and that  $h(0)>0$, we obtain that $\xi\in (0,k_-)$ if and only if $\gamma>0$. 
Similarly, assume now that $\xi \in (0,k_-)$. 
Recall that by  Lemma \ref{lem:Thelemma}, if $h''$ has a root $y^*\in (-k_+,\xi)$, then it has multiplicity one.
Since $h''(\xi)<0$, it follows that  $h''$ has a root in the interval $(0,\xi)$ if and only if $h''(0)>0$. By \eqref{eq:h''ofy} we have $h''(0)=h(0)\cdot \theta$, which gives 
that $h''$ has a root $y^*\in (0,\xi)$ if and only if $\theta>0$. 
This concludes the proof of (iii).

\color{black}
We now prove (iv). We can reduce to the case $n=2$ by setting $k_i=k_+$ for all $i$ such that $\sign(i)=1$, and setting $k_i=k_-$ for all $i$ such that $\sign(i)=-1$. With this substitution we may now study the system:
$$
g(y)=(y+k_+)^{a_+}(-y+k_-)^{a_-}-\kappa y=0 , \;\;\; 0<y<k_-.$$
We wish to show that we may choose $k_-,\;k_+,$ and $\kappa$ such that $g$ changes sign three times in the interval $(0,k_-)$. To do this, we will consider the values $g(k_+)$ and $g({2k_+})$. 
We have that 
$$
g(k_+)=2^{a_+}k_+^{a_+}(k_--k_+)^{a_-}-\kappa k_+, \;\;{\rm and} \;\; g(2k_+)=3^{a_+}k_+^{a_+}(k_--2k_+)^{a_-}-2\kappa k_+.
$$
Since $g(0)>0$ and $g(k_-)<0$ it suffices to show that there exists a choice of $k_-,\;k_+,$ and $\kappa$ such that $2k_+<k_-$, $g(k_+)<0$ and $g(2k_+)>0$, or equivalently, that $2k_+<k_-$ and 
\begin{equation}
2^{a_+}k_+^{a_+-1}(k_--k_+)^{a_-}<\kappa <\tfrac{3^{a_+}}{2}k_+^{a_+-1}(k_--2k_+)^{a_-}.\label{eq:EqExists3Sols}
\end{equation} 
The inequality $2^{a_+}k_+^{a_+-1}(k_--k_+)^{a_-}<\frac{3^{a_+}}{2}k_+^{a_+-1}(k_--2k_+)^{a_-}$ is equivalent to 
\begin{equation}
\frac{k_--k_+}{k_--2k_+}< \alpha,\;\; \textrm{with }\;\;\ \alpha= \left( \tfrac{3^{a_+}}{2^{1+a_+}} \right)^{\frac{1}{a_-}}.\label{eq:simpinEqExists3Sols}
\end{equation}
Since  $a_+>1$ we have that $\alpha>1$. For a fixed $k_+$, it is clear that we can make $\frac{k_--k_+}{k_--2k_+}$ arbitrarily close to one  for $k_-$ large enough, and in particular less that $\alpha$. It follows that there exist $k_-,$ and $k_+,$ such that $2k_+<k_-$, and the inequality \eqref{eq:simpinEqExists3Sols} holds; hence we can choose $\kappa$ such that the inequality \eqref{eq:EqExists3Sols} holds.
\end{proof}

\begin{example}\label{ex:2}
\begin{figure}[b!]\hspace{-1.4in}
\begin{subfigure}[h]{0.15\linewidth}
\begin{tikzpicture}[thick,scale=0.85, every node/.style={scale=0.85}]
\begin{axis}[
    axis lines = middle,
    xmin=-2.45,
    xmax=5,
    ymax=5000000,
    ticks=none,
]
\addplot [
    domain=-2.1:4.4, 
    samples=150, smooth, very thick,
    color=oran,
]
{(x+4)^3*(x+2)^4*(-x+4)^2*(-x+6)^2} node[above,scale=1.2] {$h(y)$};
\addplot [
    domain=-0.3:1.9, 
    samples=150, 
    color=MyCiteColor,smooth, very thick
    ]
    {2.304937796*10^6*x}  node[above,scale=1.2] {$\kappa y$};
    \addplot[
    color=MyBlue,
    only marks,
    mark=|,
    mark size=2.9pt,
    ]
    coordinates {
    (-2,0)
    } node[below,scale=1.2] {$-k_+$};
        \addplot[
    color=MyBlue,
    only marks,
    mark=|,
    mark size=2.9pt,
    ]
    coordinates {
(4,0)
    } node[below,scale=1.2] {$k_-=4$};
     \addplot[
    color=MyRed,
    only marks,
    mark=diamond*,
    mark size=1.5pt,
    ]
    coordinates {
    (.9170770853,2.113805636*10^6)
    };
         \addplot[
    color=MyRed,
    only marks,
    mark=diamond*,
    mark size=1.5pt,
    ]
    coordinates {
    (.9170770853,0)
    } node[below,scale=1.2] {$y^*$};
         \addplot[
    color=Red2,
    only marks,
    mark=o,
    mark size=2.5pt,
    ]
    coordinates {
    (.9170770853,2.113805636*10^6)
    };
\end{axis}
\end{tikzpicture}
\end{subfigure}\hspace{1.85in}
\begin{subfigure}[h]{0.15\linewidth}
\begin{tikzpicture}[thick,scale=0.85, every node/.style={scale=0.85}]
\begin{axis}[
    axis lines = middle,
        xmin=-2.55,
    xmax=7.7,
    ymax=40000000,
    ticks=none,
]
\addplot [
    domain=-2.1:6.9, 
    samples=150, 
    color=oran,smooth,very thick
]
{(x+4)^3*(x+2)^4*(-x+6)^2*(-x+7)^2} node[above,scale=1.2] {$h(y)$};
\addplot [
    domain=-0.52:3.3, 
    samples=150, 
    color=MyCiteColor,smooth,very thick
    ]
    {1.065687498*10^7*x} node[above,scale=1.2] {$\kappa y$};
        \addplot[
    color=MyBlue,
    only marks,
    mark=|,
    mark size=2.9pt,
    ]
    coordinates {
    (-2,0)
    } node[below,scale=1.2] {$-k_+$};
    \addplot[
    color=MyBlue,
    only marks,
    mark=|,
    mark size=2.9pt,
    ]
    coordinates {
    (6,0)
    } node[below,scale=1.2] {$k_-=6$};
     \addplot[
    color=MyRed,
    only marks,
    mark=diamond*,
    mark size=1.5pt,
    ]
    coordinates {
    (1.728644230,1.842194545*10^7)
    };     \addplot[
    color=MyRed,
    only marks,
    mark=diamond*,
    mark size=1.5pt,
    ]
    coordinates {
    (1.728644230,0)
    } node[below,scale=1.2] {$y^*$};
      \addplot[
    color=Red2,
    only marks,
    mark=o,
    mark size=2.5pt,
    ]
    coordinates {
    (1.728644230,1.842194545*10^7)
    }; 
          \addplot[
    color=Red2,
    only marks,
    mark=*,
    mark size=1pt,
    ]
    coordinates {
    (.3099442711,3.303037348*10^6)
    }; 
              \addplot[
    color=Red2,
    only marks,
    mark=*,
    mark size=1pt,
    ]
    coordinates {
    (2.853148131,3.040564293*10^7)
    }; 
\end{axis}
\end{tikzpicture}
\end{subfigure}\vspace{-0.1in}
\caption{Plots of the intersection of $h$ and the $\kappa y$ in the interval $(-k_+,k_-)$ for different choices of $k_i$ and $\kappa$. See Example \ref{ex:2}. \label{Fig:2plots}\vspace{-0.09in}}
\end{figure}
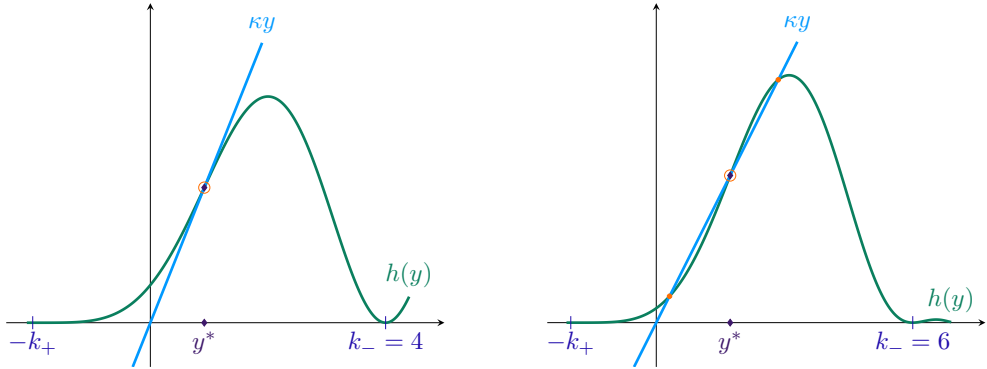
Let $a=[3,4,2,2]$ and suppose that $b_1>3$, $b_2>4$, $b_3=b_4=1$. Then $h(y)=(y +k_{1})^{3}(y +k_{2})^{4}(-y +k_{3})^{2}(-y +k_{4})^{2}$.
By Theorem \ref{thm:atMost3Sol} to obtain three solutions to the Gale dual system we must have that:$$
\gamma=\tfrac{3}{k_{1} }+\tfrac{4}{k_{2} }-\tfrac{2}{k_{3} }-\tfrac{2}{k_{4} }>0\;\;\; {\rm and}\;\;\;\theta= \gamma^2-\left(\tfrac{3}{k_{1}^2 }+\tfrac{4}{k_{2}^2 }+\tfrac{2}{k_{3}^2 }+\tfrac{2}{k_{4}^2 }\right)>0.
$$
Choosing $k=[4,2,4,6]$ gives $k_-=4$, $\gamma=\frac{23}{12}>0$ and $\theta=\frac{145}{144}>0$; in this case the one root of $h'$ in $(0,4)$ is $\xi=2$ and the inflection point in the interval $(0,2)$ is $y^*\simeq 0.917$. The tangent line to $h$ at $y^*$ is (approximately):$$t_h(y)=1.985169547\cdot 10^6y+293252.134,\;\;\;{\rm with\;} \ t_h(0)>0.$$
Hence any choice of $\kappa$ gives exactly one intersection of $\kappa y$ and $h$ in the interval $(0,4)$. This is illustrated in the left hand plot in Figure \ref{Fig:2plots}.
On the other hand choosing $k=[4,2,6,7]$ gives $k_-=6$, $\gamma=\frac{179}{84}>0$ and $\theta=\frac{7699}{7056}>0$; in this case the one root of $h'(y)$ in $(0,6)$ is $\xi\simeq 3.1075$ and the inflection point in the interval $(0,\xi)$ is $y^*\simeq 1.73$. The tangent line to $h$ at $y^*$ is (approximately):$$t_h(y)=1.379464137\cdot 10^7y-5.42408176\cdot 10^6,\;\;\;{\rm with\;}t_h(0)<0.$$
From this we see that setting $\kappa=\frac{h(y^*)}{y^*}=1.065687498\cdot 10^7$ gives exactly three intersections of $\kappa y$ and $h$ in the interval $(0,6)$. This is illustrated in the right hand plot in Figure \ref{Fig:2plots}. The three points are (approximately): 
$y\simeq 0.3099, \  1.7286, \  2.8531.$
\end{example}

\noindent A sufficient condition to obtain three positive steady states of \eqref{eq:FewNomSys_Nvar} is given in Proposition \ref{prop:secondSufficentCond}.

\begin{proposition}\label{prop:secondSufficentCond}
{Assume that $n\geq 3$, $a_+>0$ and $a_->0$. 
If there exist $k_{\mu}< k_{\nu}<k_-$ with $\sign(\mu)=\sign(\nu)=1$, $a_\mu\geq 1$, $a_\nu\geq 1$, and }
\begin{align*}
2^{a_\mu}k_{\mu}^{a_\mu-1}\prod_{i=1\atop i\neq \mu}^{n} \left(\sign(i) k_{\mu} +k_i \right)^{a_i}&< \kappa 
<2^{a_\nu}k_{\nu}^{a_\nu-1}\prod_{i=1\atop i\neq \nu}^{n} \left(\sign(i) k_{\nu} +k_i \right)^{a_i} \;\;\;{\rm for\; some\;} \kappa>0,
\end{align*} 
then the Gale dual system \eqref{GaleSysSimpNet} has exactly three solutions.\label{prop:Exact3Sol}
\end{proposition}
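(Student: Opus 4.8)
The plan is to locate three sign changes of $g$ inside the domain $(0,k_-)$ by evaluating it at the four ordered points $0<k_\mu<k_\nu<k_-$, and then to invoke the upper bound from Theorem~\ref{thm:atMost3Sol} to pin the count at exactly three. Recall that on $(0,k_-)$ the system \eqref{GaleSysSimpNet} is $g(y)=h(y)-\kappa y=0$ with $h(y)=\prod_{i=1}^n(\sign(i)y+k_i)^{a_i}$, and fix the $\kappa$ furnished by the hypothesis.

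First I would record the signs at the two endpoints of the domain. Since $h(0)=\prod_{i=1}^n k_i^{a_i}>0$, we have $g(0)=h(0)>0$; and since $k_-$ is by construction the smallest positive root of $h$, we have $g(k_-)=-\kappa k_-<0$.

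Next I would evaluate $g$ at $k_\mu$ and $k_\nu$, the essential algebraic step being to peel off the factor carrying the doubled root. Because $\sign(\mu)=1$, the $\mu$-th factor of $h(k_\mu)$ equals $(\sign(\mu)k_\mu+k_\mu)^{a_\mu}=(2k_\mu)^{a_\mu}=2^{a_\mu}k_\mu^{a_\mu}$, so that
\[
g(k_\mu)=k_\mu\left(2^{a_\mu}k_\mu^{a_\mu-1}\prod_{i\neq\mu}(\sign(i)k_\mu+k_i)^{a_i}-\kappa\right).
\]
The left-hand inequality in the hypothesis makes the parenthesis negative, and $k_\mu>0$, whence $g(k_\mu)<0$. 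The identical computation at $k_\nu$ (now with $\sign(\nu)=1$) together with the right-hand inequality gives $g(k_\nu)>0$.

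Finally, along $0<k_\mu<k_\nu<k_-$ the values of $g$ follow the sign pattern $+,-,+,-$, so the intermediate value theorem produces a root in each of the three disjoint subintervals $(0,k_\mu)$, $(k_\mu,k_\nu)$, $(k_\nu,k_-)$, giving at least three distinct solutions in $(0,k_-)$. Since $\mu\neq\nu$ (they index distinct rate constants $k_\mu<k_\nu$), $\sign(\mu)=\sign(\nu)=1$, and $a_\mu,a_\nu\geq1$, we have $a_+\geq a_\mu+a_\nu\geq2>1$, so Theorem~\ref{thm:atMost3Sol}(i) bounds the number of solutions counted with multiplicity by three. Hence there are exactly three, each of multiplicity one. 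I expect no analytic subtlety here once the endpoint signs are in place; the only thing needing care is the bookkeeping---correctly isolating the single factor that produces the doubled value $2k_\mu$ and the resulting exponent $a_i-1$, and noting that the hypothesis $k_\nu<k_-$ keeps all three subintervals inside the domain $(0,k_-)$.
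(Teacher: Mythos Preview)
Your proof is correct and follows exactly the approach of the paper: establish the sign pattern $g(0)>0$, $g(k_\mu)<0$, $g(k_\nu)>0$, $g(k_-)<0$, apply the intermediate value theorem for at least three roots, and invoke Theorem~\ref{thm:atMost3Sol}(i) for at most three. Your write-up simply expands the computations that the paper leaves implicit, including the factorization $g(k_\mu)=k_\mu\bigl(2^{a_\mu}k_\mu^{a_\mu-1}\prod_{i\neq\mu}(\sign(i)k_\mu+k_i)^{a_i}-\kappa\bigr)$ and the verification that $a_+>1$.
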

\begin{proof}
Let $g$ be as in \eqref{GaleSysSimpNet}, we have that  $g(0)>0$, $g(k_{\mu})<0$,  $g(k_{\nu})>0$ and  $g(k_{-})<0$. Hence, $g$ has at least three roots in the interval $(0,k_-)$; the conclusion follows by Theorem \ref{thm:atMost3Sol}(i).
\end{proof}

\subsection{Stability of the steady states}\label{subsec:StabSteady_1}
We now study the asymptotic stability of the steady states of the reaction network \eqref{eq:myreaction}. 
For any number of species $n$, the embedded network on any subset of species $\{X_{i_1},\dots,X_{i_s}\}$ is of the same type, \eqref{eq:myreaction}, with the number of species equal to $s\leq n$. 
In light of this we first study the case where $n=1$; in this case all signs agree. We then study the case where $n=2$ and both non-zero signs occur. Finally, we extend our conclusions to arbitrary $n$ using Proposition \ref{prop:embedded} and the auxiliary networks described in Lemma \ref{lemma:extending}.

\begin{proposition}\label{prop:stability}
Let $n=1$ and consider the stability of the steady states of \eqref{eq:myreaction}.
\begin{itemize}
\item If $a_1>b_1$, the only positive steady state is asymptotically stable.
\item If $a_1<b_1$ and there is one positive steady state of multiplicity one, then it is asymptotically stable. If there are two positive steady states, then the smallest is asymptotically stable and the largest is unstable.
\end{itemize}
\end{proposition}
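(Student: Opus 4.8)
The plan is to work directly with the one-dimensional ODE $\dot x_1 = (b_1-a_1)\ell x_1^{a_1} - \cc_1 x_1 + \k_1 =: F(x_1)$, since for $n=1$ asymptotic stability of a steady state $x_1^*$ is governed entirely by the sign of $F'(x_1^*)$: if $F'(x_1^*)<0$ the state is asymptotically stable, and if $F'(x_1^*)>0$ it is unstable. Thus the whole proposition reduces to determining the sign of $F'$ at each steady state, which I would do by relating the geometry of the graph of $F$ near its roots to the crossing behavior already understood through the Gale dual analysis.

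\textbf{Case $a_1>b_1$ (i.e.\ $\sign(1)=-1$).} Here $F(x_1) = -|b_1-a_1|\ell x_1^{a_1} - \cc_1 x_1 + \k_1$. First I would observe that $F(0)=\k_1>0$ and $F(x_1)\to-\infty$ as $x_1\to+\infty$, so there is at least one positive root. Then I would note that $F'(x_1) = -a_1|b_1-a_1|\ell x_1^{a_1-1} - \cc_1 < 0$ for all $x_1>0$, so $F$ is strictly decreasing on $(0,\infty)$. This immediately gives both that the positive steady state is unique and that $F'<0$ there, hence it is asymptotically stable. This case is essentially automatic.

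\textbf{Case $a_1<b_1$ (i.e.\ $\sign(1)=1$).} Now $F(x_1)=|b_1-a_1|\ell x_1^{a_1} - \cc_1 x_1 + \k_1$, with $F(0)=\k_1>0$ and $F(x_1)\to+\infty$. I would argue that since a positive steady state is a transversal crossing when it has multiplicity one, the sign of $F'$ alternates across consecutive simple roots. When there is a single simple positive root $x^*$, the fact that $F(0)>0$ and $F\to+\infty$ forces $F$ to dip below zero and come back up (otherwise no crossing), so the unique crossing must be one where $F$ is decreasing, i.e.\ $F'(x^*)<0$, giving asymptotic stability; the multiplicity-one hypothesis rules out a tangential touch. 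When there are two positive roots $x_1^* < x_2^*$, the configuration $F(0)>0$, $F\to+\infty$, and exactly two crossings forces the smaller crossing to be descending ($F'(x_1^*)<0$, stable) and the larger to be ascending ($F'(x_2^*)>0$, unstable). I would make this precise either by an intermediate-value/monotonicity argument on the sign changes of $F$, or more concretely by observing $F'$ is itself (for $a_1\geq 2$) of the form $a_1|b_1-a_1|\ell x_1^{a_1-1}-\cc_1$, strictly increasing in $x_1$, so $F'$ has exactly one positive zero; thus $F$ is decreasing then increasing, and the two roots must straddle this minimum, yielding the claimed signs.

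\textbf{Anticipated obstacle.} The main subtlety is the bookkeeping of which crossing is stable when there are two roots, and ensuring the multiplicity-one hypothesis (or the structure of $F'$) genuinely excludes degenerate or tangential cases. Translating ``smallest/largest steady state'' into ``descending/ascending crossing'' must be done carefully so as not to implicitly assume what we are proving; the cleanest route is the explicit computation showing $F'$ has a single positive zero (the minimizer of $F$) when $a_1\geq 2$, and that $a_1=1$ cannot produce two positive roots at all, which is consistent with the earlier count that two solutions require $a_+>1$. I would also want to confirm that the correspondence with the Gale dual roots in Proposition~\ref{Prop:GaleDual2} matches the ordering, but for $n=1$ the direct ODE argument is self-contained and avoids relying on that translation.
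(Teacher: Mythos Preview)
Your approach is the same as the paper's: work with the scalar ODE $\dot x_1=F(x_1)$ and read off stability from the sign of $F'$ at each root, using $F(0)>0$ to orient the crossings. The $a_1>b_1$ case and the two-root analysis via the unique positive critical point of $F$ are fine and match the paper's reasoning.

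The one place that is muddled is your handling of the single-simple-root case when $a_1<b_1$. You write that ``$F(0)>0$ and $F\to+\infty$ forces $F$ to dip below zero and come back up \dots\ so the unique crossing must be one where $F$ is decreasing.'' This is internally inconsistent: if $F$ dips below and comes back up there are \emph{two} crossings, and the claim $F\to+\infty$ is false for $a_1\le 1$ (where $F$ is affine), which is in fact the only regime in which a single simple root can occur. The paper sidesteps all of this with the one-line observation that since $f(0)>0$, the \emph{first} positive simple root must be a descending crossing, so $f'<0$ there; no appeal to the behavior at infinity or to a case split on $a_1$ is needed. The same observation also gives the two-root case immediately: $f$ is positive, then negative, then positive on the three intervals determined by the two simple roots, forcing $f'<0$ at the smaller and $f'>0$ at the larger.
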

\begin{proof}
The ODE system of the network \eqref{eq:myreaction} is 
\begin{align*}
\frac{dx_1}{dt} &=f(x_1),\qquad\textrm{with }\; f(x_1)=  (b_1-a_1)\ell x_1^{a_1}- \cc_{1}x_1+\k_{1}.
\end{align*}
A steady state $x_1^*$ is a solution to the equation $f(x_1^*)=0$. 
It is well known that if $f'(x_1^*)<0$, then the steady state is asymptotically stable, and if $f'(x_1^*)>0$, then it is unstable. 
Since $f(0)>0$, if the network has one positive steady state $x_1^*$, of multiplicity one, then necessarily $f'(x_1^*)<0$ and thus $x_1^*$ is asymptotically stable. If the network has two positive steady states (which must each be of multiplicity one), then the smallest steady state satisfies $f'(x_1^*)<0$ and is asymptotically stable, and the largest satisfies $f'(x_1^*)>0$ and it is unstable. 
\end{proof}

\begin{proposition}\label{prop:stability3}
Let $n=2$ with $0<a_1<b_1$,  $a_2>b_2$ and consider the stability of the steady states of the reaction network \eqref{eq:myreaction}.
\begin{itemize}
\item If there is only one positive steady state of multiplicity one, it is asymptotically stable.
\item If $\cc_{1}<\cc_{2}$ and there are three positive steady states, then two are asymptotically stable and the other is unstable.
\end{itemize}
\end{proposition}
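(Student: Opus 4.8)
The plan is to analyze the Jacobian of the system $f_\kappa = [f_{\kappa,1}, f_{\kappa,2}]^T$ at each positive steady state and relate the sign of its determinant to the behavior of the Gale dual polynomial $g(y) = h(y) - \kappa y$ established in \S\ref{subsec:numberSteady_1}. For $n=2$ with $0<a_1<b_1$ and $a_2>b_2$, the ODE system is $\frac{dx_i}{dt} = (b_i-a_i)\ell x^a - \cc_i x_i + \k_i$, so the Jacobian is
\begin{equation*}
J = \begin{bmatrix} (b_1-a_1)\ell a_1 x^a x_1^{-1} - \cc_1 & (b_1-a_1)\ell a_2 x^a x_2^{-1} \\ (b_2-a_2)\ell a_1 x^a x_1^{-1} & (b_2-a_2)\ell a_2 x^a x_2^{-1} - \cc_2 \end{bmatrix}.
\end{equation*}
I would use the Routh--Hurwitz criterion for $2\times 2$ matrices: a steady state is asymptotically stable precisely when $\tr(J)<0$ and $\det(J)>0$, and unstable when $\det(J)<0$ (a saddle). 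So the task reduces to controlling these two quantities.

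The first key step is to compute $\det(J)$ in closed form. Since both diagonal entries contain the common factor $\ell x^a$, expanding the determinant causes the pure $x^{2a}$ terms to cancel, leaving
\begin{equation*}
\det(J) = \cc_1\cc_2 - \ell x^a\left( (b_1-a_1)a_1 \tfrac{\cc_2}{x_1} + (b_2-a_2)a_2 \tfrac{\cc_1}{x_2}\right).
\end{equation*}
I then want to show that the sign of $\det(J)$ at each steady state matches the sign of $g'$ at the corresponding Gale dual root $y$, up to a positive factor. Indeed, since the correspondence of Proposition \ref{Prop:GaleDual2} is multiplicity preserving and $g(0)>0$ while $g(k_-)<0$, the three roots $y_1<y_2<y_3$ of $g$ in $(0,k_-)$ satisfy $g'(y_1)<0$, $g'(y_2)>0$, $g'(y_3)<0$. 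My aim is to establish that $\det(J)>0$ at the outer two steady states and $\det(J)<0$ at the middle one, so that the middle steady state is automatically a saddle (hence unstable) independently of the trace.

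The second key step, and where the hypothesis $\cc_1<\cc_2$ enters, is to control the sign of $\tr(J)$ at the two outer steady states in order to conclude they are asymptotically stable rather than unstable. Here $\tr(J) = \ell x^a\left(\tfrac{(b_1-a_1)a_1}{x_1} + \tfrac{(b_2-a_2)a_2}{x_2}\right) - \cc_1 - \cc_2$, where the first bracketed sum has a positive and a negative contribution since $b_1>a_1$ but $b_2<a_2$. The plan is to use $\det(J)>0$ together with the relations from the steady state equations $\cc_i x_i = \k_i + (b_i-a_i)\ell x^a$ to bound the trace; the condition $\cc_1<\cc_2$ should force the stabilizing contribution $-\cc_1-\cc_2$ to dominate at the outer roots, giving $\tr(J)<0$. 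I expect the main obstacle to be precisely this trace estimate: showing $\det(J)>0$ pins down the middle root as a saddle by a clean sign argument, but proving the outer two are \emph{stable} (not unstable nodes with positive trace) requires genuinely using $\cc_1<\cc_2$ to rule out a positive trace, and translating that inequality through the Gale dual substitution into a usable bound on $x^a/x_1$ and $x^a/x_2$ is the delicate part. The single-multiplicity-one case is easier, following the $n=1$ pattern: $g(0)>0$ forces $g'<0$ at a lone root, giving $\det(J)>0$, and a similar but simpler trace bound yields stability.
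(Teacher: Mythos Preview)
Your strategy is essentially the paper's: compute $\det(J)$ and $\tr(J)$, tie the sign of $\det(J)$ to the derivative of a one-variable reduction of the steady state equations, and then use $\cc_1<\cc_2$ together with $\det(J)>0$ to force $\tr(J)<0$. The differences are in execution, and one of your steps is stated incorrectly.

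\textbf{Choice of one-variable polynomial.} You propose linking $\det(J)$ to $g'(y)$ at the Gale dual root. The paper instead eliminates $x_2$ directly: from the two steady state equations one gets $x_2$ as an affine function of $x_1$, and substituting into the first equation yields a polynomial $f(x_1)$ whose positive roots are the steady states. A short calculation then gives the exact identity $f'(x_1)=-\det(J(x))/\cc_2$. This is cleaner than going through $g(y)$, because it produces the sign relation immediately without tracking the Gale dual change of variables $y\mapsto x$. Your route would work, but you would still need to compute how $g'(y)$ transforms under $x_i=(k_i+\sign(i)y)/c_i$ to recover the sign of $\det(J)$; the paper's substitution bypasses this. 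Note also that your statement ``the sign of $\det(J)$ \emph{matches} the sign of $g'$'' is backwards: you yourself write $g'<0$ at the outer roots where you want $\det(J)>0$, so the signs must be opposite, exactly as in the paper's identity.

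\textbf{The trace bound.} The paper's argument here is the one you anticipate but do not write down: rewriting $\det(J(x))>0$ as an upper bound on the positive term $a_1\alpha_1 x_1^{a_1-1}x_2^{a_2}$ in $\tr(J)$ and substituting gives
\[
\tr(J(x))<\frac{a_2\alpha_2\, x_1^{a_1}x_2^{a_2-1}(\cc_1-\cc_2)}{\cc_2}-\cc_2,
\]
which is negative when $\cc_1<\cc_2$. This does not use the steady state relations $\cc_i x_i=\k_i+(b_i-a_i)\ell x^a$ that you mention; it is a direct algebraic manipulation of the determinant inequality.

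\textbf{The single steady state case.} You assert this case is ``easier'' and that ``a similar but simpler trace bound yields stability'' without $\cc_1<\cc_2$. The paper does not give a separate, hypothesis-free trace argument for this case; it handles both bullets with the same bound. If you have a genuinely simpler trace estimate for the one-steady-state case that avoids $\cc_1<\cc_2$, you should write it out, because it is not obvious.
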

\begin{proof}
The ODE system is of the form
\begin{align*}
\frac{dx_1}{dt} &= \alpha_1 x_1^{a_1}x_2^{a_2}- \cc_{1}x_1+\k_{1}, &
\frac{dx_2}{dt} &= -\alpha_2  x_1^{a_1}x_2^{a_2}- \cc_{2}x_2+\k_{2}.
\end{align*}
with $\alpha_1=(b_1-a_1)\ell>0$ and $\alpha_2=(a_2-b_2)\ell>0$. The associated Jacobian matrix is
$$ J(x) = \begin{bmatrix}
 a_1 \alpha_1  x_1^{a_1-1}x_2^{a_2}- \cc_{1}  &  a_2\alpha_1  x_1^{a_1}x_2^{a_2-1} \\[6pt]
-a_1\alpha_2  x_1^{a_1-1}x_2^{a_2} &  -a_2\alpha_2  x_1^{a_1}x_2^{a_2-1}- \cc_{2}
\end{bmatrix}.$$
Consider a steady state $x=(x_1,x_2)$. Since we have only two variables, if the determinant of $J(x)$ is negative, then the steady state is unstable, and if the determinant is positive and the trace is negative, then the steady state is asymptotically stable \cite{perko}. Compute the trace and determinant of $J(x)$:
\begin{align*}
& \det(J(x)) = -x_1^{a_1-1}x_2^{a_2-1} (a_1\alpha_1\cc_{2} x_2 - a_2\alpha_2 \cc_{1} x_1) + \cc_{1} \cc_{2} ,\\
&\tr(J(x))   = x_1^{a_1-1}x_2^{a_2-1} (a_1\alpha_1 x_2 - a_2\alpha_2 x_1) - \cc_{1}- \cc_{2}.
\end{align*}

Rewriting the two steady state equations, we see that for our steady state $x$ we have that 
$x_2 =\frac {-\alpha_{{2}}\cc_{1}x_{{1}}+\alpha_{{1}}\k_{{2}}+\alpha_{2}\k_{1}}{\alpha_{1}\cc_{2}}$ and hence
\begin{equation}\label{eq:x2}
 f(x_1)=\alpha_1x_1^{a_1} \left( \frac {-\alpha_2\cc_{1}x_1+\k_{{2}}\alpha_{{1}}+\alpha_{{2}}\k_{1}}{\alpha_{{1}}\cc_2} \right) ^{a_{{2}}}-\cc_{1}x_{{1}}+\k_{1}=0.
\end{equation} If the network has one positive steady state $x$ of multiplicity one,  since $f(0)>0$, we must have $f'(x_1)<0$. If the network admits three steady states of multiplicity one, then two of them satisfy $f'(x_1)<0$ and the other $f'(x_1)>0$. 
Hence, it is enough to show that if $f'(x_1)<0$, then the steady state is asymptotically stable, and if $f'(x_1)>0$, then the steady state is unstable. Let $x=(x_1,x_2)$ be a positive steady state.
Using \eqref{eq:x2}, the derivative of $f$ at $x_1$ is
\begin{align*}
f'(x_1) & =a_1 \alpha_1x_1^{a_1-1} x_2^{a_2} -a_2\alpha_2 x_1^{a_1}  x_2^{a_2-1} \tfrac{\cc_{1}}{\cc_{2}}-\cc_{1}  
= \tfrac{-\det(J(x))}{\cc_{2}}.
\end{align*}
Thus $f'(x_1)$ and $\det(J(x))$ have opposite signs at a steady state $(x_1,x_2)$. It follows that if $f'(x_1)>0$, then the determinant is negative and the steady state is unstable. On the other hand, if $f'(x_1)<0$ this implies $\det(J(x))>0$ and we need to show that the trace is negative for the steady state to be asymptotically stable. 
Suppose that $\det(J(x))>0$. We have that \small
$$ \alpha_1a_1 x_1^{a_1-1}x_2^{a_2}<\frac {a_2\alpha_2\cc_{1} x_1^{a_1}{x_2}^{a_2-1}}{\cc_{2}}+\cc_{1}, \;\;{\rm giving,\;\;}\tr(J(x)) <  \frac{x_1^{a_1}{x_2}^{a_2-1}a_2\alpha_2 ( \cc_{1}-\cc_{2}) }{\cc_{2}} - \cc_{2}.
 $$ \normalsize
The last inequality shows that $\tr(J(x))$ is negative if $\cc_{1}<\cc_{2}$. 
\end{proof}

\begin{remark}  Note that the condition $\cc_{1}<\cc_{2}$ does not affect the capacity of the network to have multiple steady states. Specifically, while the parameters $\cc_i$ do appear in $\kappa$, see \eqref{GaleSysSimpNet}, and while the value of $\kappa$ does play a role in determining if multiple steady states may be achieved, requiring that $\cc_{1}<\cc_{2}$ does not constrain the values which $\kappa$ can take. 
\end{remark}
Before stating the main theorem of this section, we prove the following lemma. 
\begin{lemma}\label{lemma:extending}
Let $N$ be a reaction network as in \eqref{eq:myreaction} with $n\geq 2$ and assume $a_1<b_1$, $a_2<b_2$. Let $\tilde{a}=a_1+a_2$ and $\tilde{b}=b_1+b_2$ and consider the reaction network $\tilde{N}$ of type \eqref{eq:myreaction} with $n-1$ species $Y_1,Y_3\dots,Y_n$ and  reactions 
$$ \tilde{a} Y_1 + a_3Y_3 + \dots + a_n Y_n \xrightarrow{\ell}  \tilde{b} Y_1 + b_3Y_3 + \dots + b_n Y_n,\qquad Y_i \ce{<=>[\tilde{\cc}_{i}][\tilde{\k}_{i}]}
0,\quad i=1,3,\dots,n .$$
Let $\beta_i  = \tfrac{b_i-a_i}{\tilde{b}-\tilde{a}}$ and define \begin{equation*}
   \cc_i  =\begin{cases} \beta_i \tilde{\cc}_{1} & \textrm{for } i=1,2 \\ \tilde{\cc}_{i}  & \textrm{for } i=3,\dots,n  \end{cases}  \;\;{\rm and}\;\; \k_i  = \begin{cases} \beta_i  \tilde{\k}_{1} & \textrm{for } i=1,2 \\ \tilde{\k}_{i}  & \textrm{for } i=3,\dots,n  \end{cases}.
\end{equation*}
If $\tilde{y}=(y_1,y_3,\dots,y_n)$ is a steady state of $\tilde{N}$ for the reaction rate constants $\ell,\; \tilde{\cc}_{i},\; \tilde{\k}_{i}$, for $i=1,3,\dots,n$, then $y^*=(y_1,y_1,y_3,\dots,y_n)$ is a steady state of $N$ for the reaction rate constants $\ell$, $\cc_i$ and $\k_i$. 

\end{lemma}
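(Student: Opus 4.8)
The plan is to verify directly that the proposed point $y^* = (y_1, y_1, y_3, \dots, y_n)$ satisfies each of the $n$ steady-state equations \eqref{eq:FewNomSys_Nvar} of $N$, using the $n-1$ steady-state equations that $\tilde{y} = (y_1, y_3, \dots, y_n)$ is assumed to satisfy for $\tilde{N}$. The conceptual content is that the single balance equation associated with the species $Y_1$ in $\tilde{N}$ splits into the two balance equations associated with $X_1$ and $X_2$ in $N$, weighted respectively by $\beta_1$ and $\beta_2$; everything else is carried across unchanged.

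First I would record the monomial identity. Since the first two coordinates of $y^*$ both equal $y_1$, we have $(y^*)^a = y_1^{a_1}y_1^{a_2}\prod_{i=3}^n y_i^{a_i} = y_1^{\tilde{a}}\prod_{i=3}^n y_i^{a_i}$, with $\tilde{a}=a_1+a_2$, and this is exactly the nonlinear monomial of $\tilde{N}$ evaluated at $\tilde{y}$; call this common value $M$. Next I would treat the equations in two groups. For $i=3,\dots,n$, the $i$-th equation of $N$ at $y^*$ reads $(b_i-a_i)\ell M - \cc_i y_i + \k_i$, and since $\cc_i=\tilde{\cc}_{i}$ and $\k_i=\tilde{\k}_{i}$ by construction, this is verbatim the $Y_i$-equation of $\tilde{N}$ at $\tilde{y}$, hence zero. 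For $i=1,2$, I would use the defining identity $b_i-a_i=\beta_i(\tilde{b}-\tilde{a})$ (immediate from $\beta_i=(b_i-a_i)/(\tilde{b}-\tilde{a})$) together with $\cc_i=\beta_i\tilde{\cc}_{1}$, $\k_i=\beta_i\tilde{\k}_{1}$, and the fact that the $i$-th coordinate of $y^*$ is $y_1$, to factor
\[
(b_i-a_i)\ell M - \cc_i y_1 + \k_i = \beta_i\left[(\tilde{b}-\tilde{a})\ell M - \tilde{\cc}_{1} y_1 + \tilde{\k}_{1}\right].
\]
The bracketed expression is precisely the $Y_1$-equation of $\tilde{N}$ at $\tilde{y}$, which vanishes by hypothesis, so both the $X_1$ and $X_2$ equations of $N$ vanish. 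This shows $y^*$ solves all $n$ equations and is therefore a steady state of $N$.

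I do not anticipate a genuine obstacle here: the argument is a single substitution, and the only point demanding care is the bookkeeping that makes the one $Y_1$-balance split correctly. Concretely, one must confirm that $b_i-a_i$, $\cc_i$, and $\k_i$ all share the common factor $\beta_i$ for $i=1,2$, which is exactly what forces the factorization above, and it is consistent that $\beta_1+\beta_2=1$ since $(b_1-a_1)+(b_2-a_2)=\tilde{b}-\tilde{a}$. I would also remark that positivity of $y^*$ is inherited coordinatewise from $\tilde{y}$, and that the hypotheses $a_1<b_1$, $a_2<b_2$ give $\beta_1,\beta_2>0$, so the constructed $\cc_i$ and $\k_i$ are bona fide positive reaction rate constants; this is what makes the conclusion usable, via Proposition~\ref{prop:embedded}, for transferring stability conclusions from small $n$ to arbitrary $n$.
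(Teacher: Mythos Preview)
Your proof is correct and follows essentially the same approach as the paper: both verify the $n$ steady-state equations of $N$ at $y^*$ directly, treating the indices $i\ge 3$ by identifying the equation verbatim with the corresponding $\tilde{N}$-equation, and the indices $i=1,2$ by factoring out $\beta_i$ to reduce to the $Y_1$-equation of $\tilde{N}$. Your added remarks on the positivity of $\beta_i$, $\cc_i$, $\k_i$ and of $y^*$ are not in the paper's proof but are useful and correct observations.
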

\begin{proof}
For $i\geq 3$, since $\tilde{y}$ is a steady state of $\tilde{N}$ we have
$$0= (b_i-a_i ) \ell y_1^{\tilde{a} } y_3^{a_3}\cdots y_n^{a_n}   - \tilde{\cc}_{i} y_i + \tilde{\k}_{i}=
 (b_i-a_i ) \ell y_1^{a_1} y_1^{a_2}y_3^{a_3}\cdots y_n^{a_n}   - \cc_i y_i + \k_i.  $$
Hence, the equations $\frac{dx_i}{dt}=0$ hold for $N$ at the point $y^*$. Further, we know that 
$0 = (\tilde{b} - \tilde{a} ) \ell  y_1^{\tilde{a} } y_3^{a_3}\cdots y_n^{a_n}  - \tilde{\cc}_{1} y_1 + \tilde{\k}_{1}. $
For $i=1,2$, the steady state equation for $N$ evaluated at $y^*$ is
$$   (b_i-a_i) \ell y_1^{a_1} y_1^{a_2} y_3^{a_3}\cdots y_n^{a_n}   - \cc_i y_i + \k_i=
 \beta_i \big(   (\tilde{b} - \tilde{a} ) \ell  y_1^{\tilde{a} } y_3^{a_3}\cdots y_n^{a_n}  - \tilde{\cc}_{1} y_1 + \tilde{\k}_{1}\big)=0.\quad\quad\quad\quad\qedhere
   $$
\end{proof}

Using Propositions \ref{prop:embedded}, \ref{prop:stability}, \ref{prop:stability3} and Lemma \ref{lemma:extending}, we  obtain the following theorem.

\begin{theorem}\label{thm:stability1}
Consider a reaction network of the type \eqref{eq:myreaction}.
\begin{enumerate}[(i)]
\item There exists a choice of {rate} parameters such that the network has \textcolor{blue}{at least} one positive steady state which is asymptotically stable.
\item If {$a_+>1$ and $a_-=0$}, then there exist {rate} parameter values such that there is one asymptotically stable positive steady state and one unstable positive steady state. 
\item If {$a_+>1$ and $a_->0$}, then there exist {rate} parameter values such that the network has two asymptotically stable positive steady states and one unstable positive steady state.
\end{enumerate}
\end{theorem}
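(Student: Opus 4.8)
The plan is to prove each of the three statements by combining the stability analysis for small networks (Propositions \ref{prop:stability} and \ref{prop:stability3}) with the lifting result for embedded networks (Proposition \ref{prop:embedded}) and the species-merging construction of Lemma \ref{lemma:extending}. The overall strategy is to realize a desired configuration of stable and unstable steady states on a small, well-understood network and then transport it to the general network \eqref{eq:myreaction} without loss of control over the stability types.

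For part (i), I would first observe that by Remark \ref{remark:a_i_pos} we may assume $\sign(i)\neq 0$ for all $i$, so that either $a_+>0$ or $a_->0$. If $a_->0$, the embedded network on a single species $X_j$ with $\sign(j)=-1$ (i.e.\ $a_j>b_j$) falls under the first bullet of Proposition \ref{prop:stability}, giving one asymptotically stable positive steady state, which lifts to $N$ by Proposition \ref{prop:embedded}. If instead $a_-=0$ (so $a_+>0$), the embedded network on a single species $X_j$ with $\sign(j)=1$ has, for suitable parameters, a positive steady state of multiplicity one, which by the second bullet of Proposition \ref{prop:stability} is asymptotically stable; again lift via Proposition \ref{prop:embedded}.

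For part (ii), with $a_+>1$ and $a_-=0$, I would produce a two-steady-state configuration. The cleanest route is to use Lemma \ref{lemma:extending} to merge species so as to reach a network in which a single positive species carries exponent $a_+>1$; the resulting one-species network of type \eqref{eq:myreaction} with $a_1<b_1$ and $a_1=a_+>1$ admits, by the relevant theorem on the case $a_-=0,\ a_+>1$, two positive steady states, and by the second bullet of Proposition \ref{prop:stability} the smaller is asymptotically stable and the larger unstable. Then Proposition \ref{prop:embedded} lifts this pair, with stability types preserved, to $N$. For part (iii), with $a_+>1$ and $a_->0$, the target is the two-species network of Proposition \ref{prop:stability3} with $0<a_1<b_1$ and $a_2>b_2$: apply Lemma \ref{lemma:extending} to collapse all $\sign=1$ species into one species of exponent $a_+$ and all $\sign=-1$ species into one of exponent $a_-$, reaching a genuine $n=2$ network. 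By Theorem \ref{thm:atMost3Sol}(iv) one can choose parameters giving exactly three positive steady states, and by Proposition \ref{prop:stability3} (imposing the harmless constraint $\cc_1<\cc_2$, justified in the preceding remark) two are asymptotically stable and one unstable; finally lift to $N$ via Proposition \ref{prop:embedded}.

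The main obstacle I anticipate is ensuring that the merging in Lemma \ref{lemma:extending} and the embedding in Proposition \ref{prop:embedded} compose correctly so that the number \emph{and} stability types of steady states are both preserved along the way. Lemma \ref{lemma:extending} as stated builds steady states of the larger network $N$ from those of the smaller merged network $\tilde N$, but one must check that nondegeneracy and the asymptotic/unstable classification survive this correspondence, and that the parameter choices realizing three solutions in the $n=2$ case are compatible with the constraint $\cc_1<\cc_2$ needed in Proposition \ref{prop:stability3}. The remark preceding the theorem already addresses this last compatibility point by noting $\cc_1<\cc_2$ does not restrict the attainable values of $\kappa$; the remaining bookkeeping is to confirm that the embedded-network lift of Proposition \ref{prop:embedded} delivers exactly the stated counts of stable and unstable states, which reduces the argument to a direct application of the already-established small-network results.
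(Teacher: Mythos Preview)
Your high-level architecture matches the paper's: reduce to a small network via embedding and/or the merging construction of Lemma~\ref{lemma:extending}, invoke Propositions~\ref{prop:stability} and~\ref{prop:stability3} there, and lift back via Proposition~\ref{prop:embedded}. Part~(i) is fine as written.

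The genuine gap is in how you handle Lemma~\ref{lemma:extending}. That lemma only transports \emph{steady states} from the merged network $\tilde N$ to $N$; it says nothing about non-degeneracy or stability. You flag this correctly as an obstacle, but then dismiss it as ``bookkeeping'' reducible to Proposition~\ref{prop:embedded}. It is not: Proposition~\ref{prop:embedded} lifts stability from an embedded network to the full network, but $\tilde N$ is \emph{not} embedded in $N$ (the species $Y_1$ of $\tilde N$ does not appear in $N$). The paper closes this gap by explicit Jacobian computations: in part~(ii) it computes $\det J$ and $\tr J$ for the two-species network $N$ at the diagonal point $(y^*,y^*)$ and compares them to the one-dimensional derivative $M(y^*)$ for $\tilde N$; in part~(iii) it carries out a full Routh--Hurwitz analysis for the three-species characteristic polynomial, bounding the coefficients $\xi_0,\xi_1,\xi_2$ in terms of $\det\tilde J$ and $\tr\tilde J$. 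These computations are the actual content of the proof, not a formality.

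A second, related issue: you propose to merge \emph{all} species of a given sign into one, reaching $n=1$ in~(ii) and $n=2$ in~(iii). Lemma~\ref{lemma:extending} merges only two species at a time (and only species with $a_i<b_i$), so your plan requires iterating it, and hence iterating the stability verification, which you have not supplied. The paper avoids this by a case split: if some $a_i>1$ with the right sign, it embeds directly on one or two species and never uses Lemma~\ref{lemma:extending}; only when every relevant $a_i\le 1$ does it merge, and then just once (two positive-sign species with $a_1=a_2=1$), followed by a single explicit Jacobian check. This keeps the computation to one step rather than an induction.
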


\begin{proof}
 Part (i)  follows immediately from Propositions \ref{prop:stability} and \ref{prop:embedded} by considering any embedded network with one species. We now prove (ii).  If there exists $i$ such that $a_i>1$, then the statement is a consequence of Propositions \ref{prop:stability} and \ref{prop:embedded}, by considering 
 the embedded network on $X_i$. If $a_i\leq 1$ for all $i$, then we proceed as follows. 
Assume for simplicity $a_1=a_2=1$ and recall that $b_1>a_1$, $b_2>a_2$.  Consider the embedded network $N$ on $\{X_1,X_2\}$ and the network $\tilde{N}$ with one species $Y$ as constructed in Lemma \ref{lemma:extending} with $\tilde{a}=a_1+a_2=2$ and $\tilde{b}=b_1+b_2$. 
Choose reaction rate constants $\ell, \; \tilde{\cc}_{1}, \; \tilde{\k}_{1}$ such that $\tilde{N}$ admits two positive non-degenerate steady states. 
Let $y^*$ be one such steady state. Then $(y^*,y^*)$  is a positive steady state of $N$ for the choice of reaction rate constants given in Lemma \ref{lemma:extending}. We show now that if $y^*$ is asymptotically stable (respectively unstable), then so is $(y^*,y^*)$.
Consider 
$M(y^*)= 2(\tilde{b}-2)\ell  y^* - \tilde{\cc}_{1}.  $
If $M(y^*)<0$, then $y^*$ is asymptotically stable and if $M(y^*)>0$, then $y^*$ is unstable. 
Now consider the equations $ f_i(x)= (b_i-1) \ell \, x_1 x_2 - \cc_i x_i + \k_i$, for $i=1,2$, defining the ODE system for $N$. 
By letting $\lambda= \frac{(b_1-1)(b_2-1)}{(\tilde{b}-2)^2} >0$,  the determinant and trace of the Jacobian matrix $J$ of $(f_1,f_2)$ evaluated at $(y^*,y^*)$ are
\begin{align*}
&\det(J(y^*,y^*)) =-(b_1-1)\ell\cc_{2} y^* -(b_2-1)\ell \cc_{1} y^* + \cc_{1} \cc_{2}=-\lambda \tilde{\cc}_{1}M(y^*), \\
&\tr(J(y^*,y^*)) =  (b_1-1)\ell y^* + (b_2-1) \ell y^* - \cc_{1}- \cc_{2}  = (\tilde{b}-2) \ell y^* -  \tilde{\cc}_{1}<  M(y^*),
\end{align*}
where we use that $\tilde{b}>\tilde{a}=2$. 
It follows that if $M(y^*)>0$, then $y^*$ is unstable and so is $(y^*,y^*)$. If $M(y^*)<0$, then $y^*$ is asymptotically stable and 
$(y^*,y^*)$ is as well, since $\det(J(y^*,y^*))>0$ and $\tr(J(y^*,y^*))<0$.
 This concludes the proof of (ii).

Now consider (iii). 
Let $j$ such that $a_j>0$ and $\sign(j)=-1$. 
If   $a_{i}>1$ for some $i$ such that $\sign(i)=1$, then we consider the embedded network on $\{X_{i},X_j\}$. The result follows from Theorem \ref{thm:atMost3Sol} and Propositions \ref{prop:stability3} and \ref{prop:embedded}. If $a_i\leq 1$ for all $i$ such that $\sign(i)=1$, we proceed similarly to the proof of (ii). Assume $a_1=a_2=1$, $a_1<b_1$, $a_2<b_2$ and $j=3$. 
Consider the embedded network $N$ on $\{X_1,X_2,X_3\}$, and let $\tilde{N}$ be the reaction network on the species $Y_1,Y_3$ defined as in Lemma \ref{lemma:extending} such that $\tilde{a}=2$. By Proposition \ref{prop:embedded}, it is enough to show that the statement holds for $N$. 
 By Theorem \ref{thm:atMost3Sol} and Proposition \ref{prop:stability3}, we can choose reaction rate constants $\ell, \; \tilde{\k}_{1},\; \tilde{\k}_{3}$ and $\tilde{\cc}_{1}<\tilde{\cc}_{3}$ such that $\tilde{N}$ admits three positive non-degenerate steady states, two of which are asymptotically stable and the other unstable. By the proof of Proposition \ref{prop:stability3}, the Jacobian matrix $\tilde{J}$  is such that $\det(\tilde{J})$ is negative when evaluated at the unstable steady state, and is also such that $\det(\tilde{J})>0$ and $\tr(\tilde{J})<0$ when evaluated at the asymptotically stable steady states. In particular, we have 
\begin{align*}
&\det(\tilde{J}(y_1,y_3)) = a_3(a_3-b_3) \ell  \tilde{\cc}_{1}   y_1^2y_3^{a_3-1} -2 \ell (\tilde{b}-2)\tilde{\cc}_{3} y_1y_3^{a_3} + \tilde{\cc}_{1}\tilde{\cc}_{3}    , \\
&\tr(\tilde{J}(y_1,y_3)) = - a_3(a_3-b_3) \ell  y_1^2y_3^{a_3-1} + 2\ell(\tilde{b}-2) y_1y_3^{a_3}  - \tilde{\cc}_{1} - \tilde{\cc}_{3}.
\end{align*}
Let $(y_1,y_3)$ be a steady state of $\tilde{N}$ such that $(y_1,y_1,y_3)$ is a steady state of $N$ for the choice of reaction rate constants given in Lemma \ref{lemma:extending}.
The ODE system for $N$ is defined by the polynomials $f_i(x) =  (b_i-a_i) \ell \, x_1 x_2x_3^{a_3} - \cc_i x_i + \k_{i}$ for $i=1,2,3$ (recall that $a_1=a_2=1$). 
The characteristic polynomial of the Jacobian matrix $J$ of $(f_1,f_2,f_3)$ evaluated at $(y_1,y_1,y_3)$ is 
\begin{align*}
 \chi(z)  & = z^3 + \big( a_3  (a_3-b_3)\ell y_1^2 y_3^{a_3-1} - \ell (\tilde{b}-2) y_1 y_3^{a_3} + \cc_{1} + \cc_{2} + \cc_{3}  \big) z^2 \\ & + \big(a_3(a_3-b_3) \ell (\cc_{1}+\cc_{2}) y_1^2 y_3^{a_3-1} - \ell ((b_2-1) \cc_{1} + (b_1-1) \cc_{2} + (\tilde{b}-2)\cc_{3}) y_1 y_3^{a_3}\\ &   +\cc_{1} \cc_{2} +\cc_{1}\cc_{3} + \cc_{2} \cc_{3} \big)z \\ &   
+a_3 (a_3-b_3)\ell \cc_{1} \cc_{2} y_1^2 y_3^{a_3-1} - \ell  ((b_2-1) \cc_{1} + (b_1-1) \cc_{2} )\cc_{3}  y_1 y_3^{a_3}+ \cc_{1}\cc_{2} \cc_{3}.
\end{align*}

Let $\xi_2\;,\xi_1$ and $\xi_0$ be the coefficients of the degree two, one and zero terms of $ \chi(z) $, respectively. 
Since $\xi_0$ has the opposite sign as that of the product of the eigenvalues of $J$, then if $\xi_0<0$ it follows that the steady state is unstable. Otherwise we use the Routh-Hurwitz criterion for polynomials of degree $3$; that is if $\xi_2>0$, $\xi_0>0$ and $\xi_2\xi_1>\xi_0$, then all eigenvalues have negative real part and hence the steady state is asymptotically stable. 

Let $\lambda= \frac{(b_1-1)(b_2-1)}{(\tilde{b}-2)^2} >0$ as above and note that since $\tilde{b}=b_1+b_2$, we have  $\lambda <\frac{1}{2}$. Using the definition of $\k_{i},\cc_i$  in Lemma \ref{lemma:extending} we have\small
\begin{align}
\xi_0 &=\lambda  \tilde{\cc}_{1}  \Big(  a_3(a_3-b_3) \ell    \tilde{\cc}_{1} y_1^2 y_3^{a_3-1} 
-2 \ell  (\tilde{b}-2) \tilde{\cc}_{3} y_1 y_3^{a_3} + \tilde{\cc}_{1} \tilde{\cc}_{3} \Big) = \lambda  \tilde{\cc}_{1} \det(\tilde{J}(y_1,y_3)).\label{eq:xi0}
\end{align}\normalsize
Thus, if $\det(\tilde{J}(y_1,y_3))<0$, then $\xi_0<0$ and $(y_1,y_1,y_3)$ is unstable. 
Assume $\det(\tilde{J}(y_1,y_3))>0$ so that $\xi_0>0$, and also assume $\tr(\tilde{J}(y_1,y_3)) <0$. 
It follows from the latter inequality that 
\begin{equation}\label{eq:boundB2}
\ell  (\tilde{b}-2)y_1y_3^{a_3} <  \tfrac{1}{2} \big(  a_3(a_3-b_3) \ell    y_1^2y_3^{a_3-1}   + \tilde{\cc}_{1}+\tilde{\cc}_{3}  \big).
\end{equation}
Using \eqref{eq:boundB2}, $a_3>b_3$, and  $\lambda<\tfrac{1}{2}$, 
we have
\begin{align}
\xi_2 & = a_3  (a_3-b_3)\ell y_1^2 y_3^{a_3-1} - \ell (\tilde{b}-2) y_1 y_3^{a_3} + \tilde{\cc}_{1} + \tilde{\cc}_{3} \nonumber \\ 
& \qquad\qquad  >  \tfrac{1}{2} \big(a_3(a_3-b_3) \ell    y_1^2y_3^{a_3-1}   + \tilde{\cc}_{1}+\tilde{\cc}_{3} \big)>\lambda \tilde{\cc}_{1} >0. \label{eq:ineqxi2}
\end{align}
All that remains is to show $\xi_2\xi_1>\xi_0$. We first find that
\begin{align*}
\xi_1 &= a_3 (a_3-b_3)\ell  \tilde{\cc}_{1}y_1^2 y_3^{a_3-1} - \ell (\tilde{b}-2)   (2\lambda  \tilde{\cc}_{1} + \tilde{\cc}_{3}) y_1 y_3^{a_3}   + \lambda (\tilde{\cc}_{1})^2  + \tilde{\cc}_{1} \tilde{\cc}_{3}.
\end{align*}
Since $2\lambda  \tilde{\cc}_{1}  < \tilde{\cc}_{1}  <   \tilde{\cc}_{3}$, we have 
that $ 2\lambda  \tilde{\cc}_{1} + \tilde{\cc}_{3} < 2  \tilde{\cc}_{3}$.
It follows that $\xi_1> \det(\tilde{J}(y_1,y_3))$, and in particular $\xi_1>0$.
Combining this inequality with \eqref{eq:xi0} and \eqref{eq:ineqxi2}, we find 
\begin{align*}
\xi_2\xi_1 - \xi_0 &> \lambda \tilde{\cc}_{1} \det(\tilde{J}(y_1,y_3)) - \lambda  \tilde{\cc}_{1} \det(\tilde{J}(y_1,y_3))=0.
\end{align*}
This shows that the steady state $(y_1,y_1,y_3)$ is asymptotically stable, proving (iii).
\end{proof}

\section{Networks with one non-flow reversible reaction}\label{sec:reversible}
In this section we apply Gale duality to study a modified version of the family of networks \eqref{eq:myreaction} analyzed in \S\ref{sec:OneVarGaleSys}. These networks were originally introduced in \cite{Joshi}. In particular we consider the following reaction networks with $n$ species $X_1,\dots, X_n$:
\begin{align*}
 a_1X_1+\cdots +a_nX_n &  \ce{<=>[\ell_{1}][\ell_{2}]} b_1X_1+\cdots+b_nX_n \\ 
X_i &\ce{<=>[\cc_{i}][\k_{i}]} 0,\qquad i=1,\dots,n. \numberthis \label{eq:myreaction2}
\end{align*}
The system of equations describing the steady states of the network \eqref{eq:myreaction2} is:
\begin{equation}
(b_i-a_i)\ell_1x^a-(b_i-a_i)\ell_2x^b-\cc_{i}x_i+\k_{i} =0, \;\; {\rm for\;}i=1,\dots,n. \label{eq:Sys2}
\end{equation}
As in \S\ref{sec:OneVarGaleSys} we let $\sgn(i)=\sgn(b_i-a_i)$.
In the remaining sections we will assume that $\sign(i)\neq 0$ for all $i$. This assumption is without loss of generality, see Remark \ref{remark:a_i_pos}. 
Similar to \eqref{eq:signs0}, we let 
\begin{equation}\label{eq:signs}
a_+=\sum_{\sgn(i)=1} a_i, \;\; b_+=\sum_{\sgn(i)=1} b_i, \;\; a_-=\sum_{\sgn(i)=-1} a_i, \;\; {\rm and}\;\; b_-=\sum_{\sgn(i)=-1} b_i.
\end{equation}

{In \cite{Joshi} it was shown that the system \eqref{eq:Sys2} admits at least two positive solutions for some $\ell_1,\ell_2,\cc_i$ and $\k_i$ if and only if $ a_+>1$ or $b_->1.$
} Note that the network \eqref{eq:myreaction2} is obtained by making an irreversible non-flow reaction of the network  \eqref{eq:myreaction} reversible. {This construction applies to either the reaction with label $\ell_1$ or the one with label $\ell_2$.} Hence, by Proposition~\ref{prop:subnets} and the results in \S \ref{sec:OneVarGaleSys} on the steady states of the network \eqref{eq:myreaction}, we have:
\begin{itemize}
\item If {either $a_+>1$ and $a_-=0$ or $b_->1$ and $b_+=0$}, then the network \eqref{eq:myreaction2} admits at least two positive non-degenerate steady states. 
\item If {either $a_+>1$ and $a_->0$ or $b_->1$ and $b_+>0$}, then the network \eqref{eq:myreaction2} admits at least three positive non-degenerate steady states. 
\end{itemize}
In all other cases there exist values of the reaction rate constants such that the network  \eqref{eq:myreaction2} has one positive non-degenerate steady state.

\begin{conjecture}\label{conjecture:atMost3}
Based on the analysis presented below we conjecture that the maximum possible number of positive steady states of the reaction network \eqref{eq:myreaction2} is three. 
\end{conjecture}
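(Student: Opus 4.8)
The plan is to repeat the strategy of \S\ref{sec:OneVarGaleSys}: apply Gale duality to \eqref{eq:Sys2}, reduce the resulting $l=2$ system to a single univariate equation on an interval, and then bound the number of roots of that equation by three. The system \eqref{eq:Sys2} involves exactly the $n+3$ monomials $x_1,\dots,x_n$, $x^a$, $x^b$ and $1$, so in the language of \S\ref{sec:galedual} we have $l=2$. Writing \eqref{eq:Sys2} (after dividing the $i$th equation by $|b_i-a_i|$) as $C\cdot x^W=0$ with $x^W=[x_1,\dots,x_n,x^a,x^b,1]^T$, and choosing Gale dual matrices $Q,D$ as in Proposition~\ref{Prop:GaleDual2}, the linear forms attached to the first $n$ rows of $D$ become $d_i(y)=\tfrac1{c_i}\bigl(\sign(i)(\ell_1 y_1-\ell_2 y_2)+k_i\bigr)$, while $d_{n+1}(y)=y_1$ and $d_{n+2}(y)=y_2$. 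The key observation — the ``appropriate choice'' mentioned in the introduction — is that each $d_i(y)$ depends on $(y_1,y_2)$ only through $t:=\ell_1 y_1-\ell_2 y_2$, since the coefficients of $y_1$ and $y_2$ in $d_i$ share the factor $\sign(i)$. Hence the two Gale dual equations read $y_1=P(t)$ and $y_2=R(t)$, with $P(t)=\prod_{i=1}^n\bigl(\tfrac{\sign(i)t+k_i}{c_i}\bigr)^{a_i}$ and $R(t)=\prod_{i=1}^n\bigl(\tfrac{\sign(i)t+k_i}{c_i}\bigr)^{b_i}$, and substituting back into $t=\ell_1 y_1-\ell_2 y_2$ collapses the whole system to
\[
G(t):=\ell_1 P(t)-\ell_2 R(t)-t=0,\qquad t\in(-k_+,k_-),
\]
with $k_\pm$ as in \eqref{eq:kplusDef}, \eqref{eq:kminusDef}. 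On this interval every factor $\sign(i)t+k_i$ is positive, so $P,R>0$ and the values $y_1=P(t)$, $y_2=R(t)$ automatically meet the positivity constraints; Proposition~\ref{Prop:GaleDual2} then gives a multiplicity-preserving bijection between positive steady states of \eqref{eq:myreaction2} and roots of $G$ in $(-k_+,k_-)$.

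With this reduction in hand, the task is to show $G$ has at most three roots in $(-k_+,k_-)$. First I would record the endpoint behaviour in the main case $a_+>0$, $a_->0$: one checks $G(-k_+)=k_+>0$ and $G(k_-)<0$, so the number of roots is odd and at least one, consistent with the lower bounds already obtained from Proposition~\ref{prop:subnets}; the degenerate regimes (where $k_-=+\infty$ or $a_+=0$) would be split off and handled as in the three-case analysis of \S\ref{subsec:numberSteady_1}. The natural tool is Rolle's theorem: it suffices to prove that $G''=\ell_1 P''-\ell_2 R''$ has at most one root on the interval, for then $G'$ has at most two and $G$ at most three. Here I would exploit the structure behind Lemma~\ref{lem:Thelemma} and Theorem~\ref{thm:atMost3Sol}: writing $P'=P\,\phi_a$ and $R'=R\,\phi_b$ with $\phi_a(t)=\sum_i\tfrac{\sign(i)a_i}{\sign(i)t+k_i}$ and $\phi_b(t)=\sum_i\tfrac{\sign(i)b_i}{\sign(i)t+k_i}$, both $\phi_a,\phi_b$ are strictly decreasing on $(-k_+,k_-)$, so $P$ and $R$ are each unimodal with a single interior maximum. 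One then analyses $G''=P\bigl(\ell_1\psi_a-\ell_2\tfrac RP\psi_b\bigr)$, where $\psi_a=\phi_a^2-\sum_i a_i/(\sign(i)t+k_i)^2$ and $\psi_b$ is the analogue for $b$, and tries to show this changes sign at most once, treating separately the regimes distinguished by the values of $a_+,a_-,b_+,b_-$ in \eqref{eq:signs}.

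The main obstacle is precisely this last step. In the irreversible setting of Theorem~\ref{thm:atMost3Sol} the single function $h=P$ had at most one inflection point in the interval, and that lone geometric fact forced the bound of three. In the reversible case $G$ is a difference of two unimodal functions $\ell_1P$ and $\ell_2R$ built from the same linear forms but with distinct exponent vectors $a$ and $b$, and the superposition can a priori create extra inflection points; moreover the bound must be uniform in the ratio $\ell_2/\ell_1$, so a level-set argument for $P''/R''$ does not directly apply. General fewnomial root bounds for $l=2$ systems exceed three, so sharpness genuinely requires exploiting the shared-factor structure of $P$ and $R$ rather than the monomial count alone — which is exactly why the statement remains a conjecture. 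I expect a complete proof to proceed either by (i) establishing directly that $G''$ has at most one sign change through a careful analysis of $\psi_a,\psi_b$ and the relations among $\phi_a,\phi_b$, or (ii) reducing, via an analogue of Lemma~\ref{lemma:extending} together with Proposition~\ref{prop:embedded}, to a normal form with few distinct linear factors in which the inflection count can be computed explicitly.

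For the matching lower bound there is no obstacle: Proposition~\ref{prop:subnets} applied to the $\ell_1$-only (respectively $\ell_2$-only) irreversible subnetwork, combined with Theorem~\ref{thm:atMost3Sol}(iv), already yields three positive non-degenerate steady states whenever $a_+>1,\ a_->0$ (respectively $b_->1,\ b_+>0$). Thus the entire content of Conjecture~\ref{conjecture:atMost3} lies in the uniform upper bound on the roots of $G$ described above.
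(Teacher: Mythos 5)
You are being asked to prove a statement that the paper itself leaves as a conjecture: the paper offers no proof of Conjecture~\ref{conjecture:atMost3}, only partial evidence, and your proposal likewise stops short of a proof (as you acknowledge). So the verdict has to be that there is a genuine gap. On the parts that can be compared, your reduction is essentially the paper's: your $G(t)=\ell_1P(t)-\ell_2R(t)-t$ is, after absorbing $\ell_1$ into the constants $c_i,k_i$, exactly the univariate function $\widetilde{g}_1$ of \eqref{eq:TwoVarSingvarSys}, which the paper obtains by solving $g_2=0$ for $y_2$ and substituting into $g_1$. (One small point in your favour: your interval $(-k_+,k_-)$ is determined only by the constraints $d_i(y)>0$, whereas \eqref{eq:simpGale_y1} additionally imposes $y_1>0$; since the paper's $y_1$ equals $x^a-\alpha x^b$ at a steady state, which need not be positive, your interval is the more careful choice.) Your lower-bound discussion via Proposition~\ref{prop:subnets} and Theorem~\ref{thm:atMost3Sol}(iv) also matches the paper.

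The gap is the uniform upper bound of three on the roots of $G$, and the route you sketch is both unproven and very likely too strong. Showing that $G''=\ell_1P''-\ell_2R''$ has at most one zero on the interval is a strictly stronger claim than ``at most three roots,'' and nothing you write controls the sign changes of this difference uniformly in $\ell_2/\ell_1$: unimodality of $P$ and $R$ says nothing about how many times $\ell_1P''$ and $\ell_2R''$ can cross. Tellingly, in the one regime where the paper does prove the bound of three (case (i), $a_+>1$ and $a_-=0$, Proposition~\ref{propn:AtMost3Case1Sec4}), the argument is not an inflection-point count but a generalized Descartes' rule of signs (Runge's theorem, Remark~\ref{remark:SpecificDecFuncSeq}) applied to the sequence $1,\ y_1+k_\nu,\ \prod_i(y_1+k_i)^{a_i},\ \prod_i(y_1+k_i)^{b_i}$, which works precisely because $b_i>a_i$ for all $i$ makes the last function a multiple of the third with the required root structure. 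In the mixed-sign case that device degrades to the bound $\max(a_-,\,b_++b_--a_+)+2$ of Proposition~\ref{prop:mixedSignBound}, which exceeds three in general, and the paper can only verify the bound of three computationally for a short list of exponent vectors. So your write-up correctly locates the obstruction but does not overcome it, and the specific plan of controlling $G''$ would need to be replaced by something closer to the Descartes-sequence technique --- or by a genuinely new idea --- before this could become a proof.
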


Again, we will study the number of positive steady states by constructing a Gale dual system. 
{Since, by assumption, $a_i\neq b_i$ for all $i$, we let}
$$\alpha:=\frac{\ell_2}{\ell_1}, \quad \; c_{i}:=\frac{\cc_i}{|b_i-a_i|\ell_1}, \quad k_i:=\frac{\k_i}{|b_i-a_i|{\ell_1}}.$$ We may rewrite the system \eqref{eq:Sys2} as
\begin{equation}
{\rm sgn}(i)\left(x^a-\alpha\, x^b\right)-c_{i}x_i+k_{i} =0, \;\; {\rm for\;}i=1,\dots,n.\label{eq:Sys2Simp}
\end{equation}
Using the monomial vector $x^{W}=[
 x_1 ,\dots,x_n , x^a , x^b ,1
]^T$ the system of equations \eqref{eq:Sys2Simp} can also be expressed in the form $C \cdot x^{{W}}=0,$ where\small \begin{equation}
C=\begin{bmatrix}
-c_{1}&  \cdots & 0 & {\rm sgn}(1) & -{\rm sgn}(1)\alpha& k_{1}\\
\vdots  & \ddots & \vdots& \vdots & \vdots & \vdots   \\
  0  &\cdots &-c_{n} & {\rm sgn}(n)& -{\rm sgn}(n)\alpha& k_{n}\\
\end{bmatrix}\; {\rm and}\;\;\;W = \begin{bmatrix}
1 & \dots & 0 & a_1& b_1 &0 \\ 
\vdots   & \ddots & \vdots & \vdots  & \vdots & \vdots\\
0 & \dots & 1 & a_n& b_n&0
\end{bmatrix}\hspace{-0.1cm}.\label{eq:System2CWMat}
\end{equation} \normalsize
We now compute a Gale dual system to \eqref{eq:Sys2Simp} using Proposition \ref{Prop:GaleDual2}. We have that $l=2$ and hence any Gale dual system depends on two variables, $y_1$ and $y_2$. One can check that the following choices of Gale dual matrices $ D\in \RR^{(n+3)\times 3}$ and $Q\in \ZZ^{(n+3)\times 3}$ satisfy the requirements stated in Proposition \ref{Prop:GaleDual2}:
\begin{equation}
D=\begin{bmatrix}
 \frac{{\rm sgn}(1)}{c_{1}}& 0 &\frac{k_{1}}{c_{1}} \\
\vdots & \vdots & \vdots\\
 \frac{{\rm sgn}(n)}{c_{n}}&  0 &\frac{k_{n}}{c_{n}}\\
 1 &  1&0 \\
 0 & \frac{1}{\alpha}&0\\
0 & 0 & 1
\end{bmatrix}, \;\;\; Q=\begin{bmatrix}
a_1& b_1& 0\\
\vdots & \vdots & \vdots\\
a_n & b_n & 0 \\
-1 & 0 & 0 \\
0 & -1 & 0 \\
0& 0&1
\end{bmatrix} .
\end{equation}In the notation of \S\ref{sec:galedual}, we have
$$ d_i(y_1,y_2)= \tfrac{k_i}{c_i}+\tfrac{\sign(i)}{c_i}y_1,\quad i=1,\dots,n,\quad d_{n+1}(y_1,y_2)=y_1+y_2, \; {\rm and\;\;} d_{n+2}(y_1,y_2)=\tfrac{1}{\alpha}y_2.   $$
Hence the Gale dual system to \eqref{eq:Sys2Simp} in $\RR[y_1,y_2]$ is given by 
\begin{equation}
g_1(y_1,y_2)=g_2(y_1,y_2)=0,\; {\rm for \;} y_1>0,\; y_2>0\;\; {\rm and\;} (k_i + \sgn(i)y_1)>0\label{GaleSysg3}
\end{equation}where
$$
g_1(y_1,y_2)=\prod_{i=1}^n \left({\rm sgn}(i)y_1 +k_i \right)^{a_i}-c^a (y_1 +y_2),\;{\rm and}\;\;g_2(y_1,y_2)=\prod_{i=1}^n \left({\rm sgn}(i)y_1 +k_i \right)^{b_i}- \tfrac{c^b}{\alpha}y_2.
$$\normalsize
From the relation $g_2(y_1,y_2)=0$  we have that
\begin{equation}
 y_2 = \frac{\alpha}{c^b} \prod_{i=1}^n \left({\rm sgn}(i)y_1 +k_i \right)^{b_i}. \label{eq:y2InTermsy1}
\end{equation}
After substituting $y_2$ into $g_1$ and scaling by a constant we arrive at the following expression
\begin{equation}
 \widetilde{g}_1(y_1)= c^{-a}\prod_{i=1}^n \left({\rm sgn}(i)y_1 +k_i \right)^{a_i} - \alpha c^{-b} \prod_{i=1}^n \left({\rm sgn}(i)y_1 +k_i \right)^{b_i} 
-  y_1.  \label{eq:TwoVarSingvarSys}
\end{equation} 
Hence, we will study the positive steady states of the network using the system 
\begin{equation}
 \widetilde{g}_1(y_1)=0 ,\;\; {\rm for \;} y_1>0 {\rm \;and \;} (k_i + \sgn(i)y_1)>0.\label{eq:simpGale_y1}
\end{equation} Note that from \eqref{eq:y2InTermsy1} we see that the stated constraints on $y_1$ ensure that the constraint $y_2>0$ appearing in \eqref{GaleSysg3} is always satisfied. As in \S\ref{sec:OneVarGaleSys} we will study the number of solutions of \eqref{eq:simpGale_y1} by considering the different cases arising from the values of {$a_+,a_-,b_+$, and $b_-$ in \eqref{eq:signs}.} {Note that by symmetry, a system  analogous to \eqref{eq:simpGale_y1} is obtained by reversing the roles of $a$ and $b$, and replacing $\alpha$ by $\alpha^{-1}$.}

\subsection{Number of steady states}\label{subsec:numSteadyState_2}
As in \eqref{eq:kminusDef} let $k_-$ be the smallest value of $k_i$ for which $\sgn(i)$ is negative {(recall that we set $k_-=+\infty$ if there is no negative sign)}. With $\widetilde{g}_1$ as in \eqref{eq:TwoVarSingvarSys} 
  we study the solutions to $\widetilde{g}_1(y_1)= 0$ for $y_1\in(0,k_-)$. 
  Our analysis of the number of positive steady state of the network \eqref{eq:myreaction2} focuses on the following two cases:
\begin{enumerate}[(i)]
\item {either $a_+>1$ and $a_-=0$, or $b_->1$ and $b_+=0$}, 
\item {either $a_+>1$ and $a_->0$, or $b_->1$ and $b_+>0$}.
\end{enumerate} 
{Case (i) implies that either $\sign(i)=1$ for all $i$, or $\sign(i)=-1$ for all $i$ (for example, $a_-=0$ implies that $a_i>b_i$ is not possible). In case (ii) both signs occur.}

\subsubsection{Case (i)}\label{subsubsec:allSignsEqual}
Due to symmetry, we assume, without loss of generality, that {$a_+>1$ and $a_-=0$}. 
In this case the constraints $ ({\rm sgn}(i)y_1 +k_i) >0,\; y_1>0$ are simply given by $y_1>0$, so we want to determine the positive roots of $\widetilde{g}_1$. 
\begin{remark}\label{remark:DescarteFuncSeq}
A finite sequence of real valued functions $f_1(y),\dots, f_\ell(y)$ satisfies Descartes' rule of signs on an interval $(a,b)\subset \RR$ if the number of zeros of the function $\mathfrak{c}_1f_1(y)+\cdots+\mathfrak{c}_\ell f_\ell(y)$ in $(a,b)$ (counted with multiplicity) is less than or equal to the number of sign changes in the sequence $\mathfrak{c}_1,\dots, \mathfrak{c}_\ell$ for any choice of real constants $\mathfrak{c}_i$. For more see \cite[Part~5, 87--90]{polya1997problems}. 
\end{remark}{
\begin{remark}\label{remark:SpecificDecFuncSeq}
Let $\omega_0,\dots, \omega_\ell$ be arbitrary real numbers. A classical result of Runge (see, for example, \cite[pg.~36]{dimitrov2009descartes}) states that the sequence of polynomials \begin{equation}
1,\ y-\omega_0,\ (y-\omega_0)(y-\omega_1), \dots, \ (y-\omega_0)(y-\omega_1)\cdots (y-\omega_\ell) \label{eq:DescartSeq1}
\end{equation} satisfies Descartes' rule of signs (in the sense of Remark \ref{remark:DescarteFuncSeq}) for $ y>\max(\omega_i\;|\;i=0,\dots,\ell)$; this also holds for any subsequence of this sequence.
\end{remark}}
We now show that the maximum number of positive roots of $\widetilde{g}_1$ is three. 
\begin{proposition}\label{propn:AtMost3Case1Sec4}
{If $a_+>1$ and $a_-=0$}, the equation 
$$
\widetilde{g}_1(y_1)=c^{-a}\prod_{i=1}^n \left(y_1 +k_i \right)^{a_i} - \alpha c^{-b} \prod_{i=1}^n \left(y_1 +k_i \right)^{b_i} 
-  y_1=0
$$ 
has at most three positive solutions (counted with multiplicity), and there exist values of the parameters for which $\widetilde{g}_1$ has three distinct positive roots.

Hence the network \eqref{eq:myreaction2} admits three positive non-degenerate steady states when $a_+>1$ and $a_-=0$ (by symmetry, this also holds when $b_->1$ and $b_+=0$).
\end{proposition}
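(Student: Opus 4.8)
The plan is to use the hypothesis $a_+>1$, $a_-=0$ concretely: since $a_-=0$ forces $\sign(i)=1$ for every $i$ (so $b_i>a_i\ge 0$), we have $k_-=+\infty$ and must bound the number of positive roots of $\widetilde{g}_1(y)=c^{-a}P_a(y)-\alpha c^{-b}P_b(y)-y$, where I abbreviate $P_a(y)=\prod_{i=1}^n(y+k_i)^{a_i}$ and $P_b(y)=\prod_{i=1}^n(y+k_i)^{b_i}$. First I would record the structural facts I intend to exploit: both $P_a$ and $P_b$ have only negative real roots, hence strictly positive coefficients, so $P_a,P_a',P_a'',P_b,P_b',P_b''$ are all strictly positive on $(0,\infty)$; moreover $P_a$ divides $P_b$ with quotient $R(y)=P_b(y)/P_a(y)=\prod_{i=1}^n(y+k_i)^{b_i-a_i}$, a polynomial of degree $b_+-a_+\ge 1$ again with positive coefficients, so $R,R',R''$ are non-negative and non-decreasing on $[0,\infty)$, with $R$ strictly increasing.

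For the upper bound I would eliminate the linear term by differentiating twice, obtaining $\widetilde{g}_1''(y)=c^{-a}P_a''(y)-\alpha c^{-b}P_b''(y)$. Since for a real polynomial the number of zeros in an interval (with multiplicity) exceeds that of its derivative by at most one, two applications of Rolle's theorem give that $\widetilde{g}_1$ has at most two more positive zeros than $\widetilde{g}_1''$; so it suffices to show $\widetilde{g}_1''$ has at most one positive zero. As $P_a'',P_b''>0$ on $(0,\infty)$, this is equivalent to the ratio $P_b''/P_a''$ being strictly monotone. Using $P_b=P_aR$ and $P_b''=P_a''R+2P_a'R'+P_aR''$, I would write
\[
\frac{P_b''}{P_a''}=R+2R'\,\frac{P_a'}{P_a''}+R''\,\frac{P_a}{P_a''},
\]
and argue that each summand is non-decreasing: $R,R',R''$ are non-decreasing, while $P_a'/P_a''$ and $P_a/P_a''=(P_a/P_a')(P_a'/P_a'')$ are positive and increasing. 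Since $R$ is strictly increasing, the whole right-hand side is then strictly increasing, so $\widetilde{g}_1''$ vanishes at most once, and one checks that this zero is simple, so the bound of three is genuine with multiplicity.

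The main obstacle is precisely the monotonicity of the factors $P_a/P_a'$ and $P_a'/P_a''$. The non-obvious input is the log-concavity of real-rooted polynomials: for $P$ with only negative roots, $P'/P=\sum_i a_i/(y+k_i)$ is strictly decreasing on $(0,\infty)$, whence $(P')^2>PP''$ and therefore $(P/P')'=((P')^2-PP'')/(P')^2>0$, i.e. $P/P'$ is increasing. I would apply this to $P_a$ and, crucially, to the polynomial $P_a'$, which by Rolle is again real-rooted with all roots negative; this yields $P_a'/P_a''$ increasing and hence $P_a/P_a''$ increasing as a product of positive increasing functions. This Newton-type estimate is the heart of the argument; once it is in place the two Rolle steps deliver the bound of at most three positive solutions counted with multiplicity.

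For the existence of three distinct positive roots I would perturb from $\alpha=0$. At $\alpha=0$ the equation reduces to $c^{-a}P_a(y)=y$, which is exactly the $a_-=0$, $a_+>1$ regime of the irreversible network; by the results of \S\ref{subsec:numberSteady_1} there are parameters $k_i,c_i$ giving two transversal positive crossings, and since $P_a(0)>0$ and $\deg P_a=a_+\ge 2$ the sign pattern of $c^{-a}P_a(y)-y$ on $(0,\infty)$ is $+,-,+$. Fixing such parameters together with three sample points witnessing these signs, I would then turn on a sufficiently small $\alpha>0$: by continuity the three sampled signs $+,-,+$ persist, while the added term $-\alpha c^{-b}P_b(y)$ (of degree $b_+>a_+$) forces $\widetilde{g}_1(y)\to-\infty$, producing a fourth sign $-$ further out. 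The resulting pattern $+,-,+,-$ yields at least three positive roots, and with the upper bound these are exactly three and simple, giving the three non-degenerate steady states; the reversed-roles statement for $b_->1$, $b_+=0$ then follows from the symmetry noted after \eqref{eq:simpGale_y1}. This perturbation step is routine, so the genuine difficulty of the proof is concentrated in the monotonicity claim above.
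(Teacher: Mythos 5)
Your argument is correct, but it takes a genuinely different route from the paper on both halves of the statement. For the upper bound, the paper applies a generalized Descartes' rule of signs (Runge's theorem, Remark \ref{remark:SpecificDecFuncSeq}) to the sequence $1,\ y_1+k_\nu,\ \prod_i(y_1+k_i)^{a_i},\ \prod_i(y_1+k_i)^{b_i}$, reading off the sign pattern $+,-,+,-$ to get at most three roots on all of $(-k_+,\infty)$ in one step; you instead differentiate twice to kill the affine part and reduce to showing that $P_b''/P_a''$ is strictly increasing on $(0,\infty)$, which you obtain from the product decomposition $P_b''/P_a''=R+2R'\,(P_a'/P_a'')+R''\,(P_a/P_a'')$ together with the log-concavity of real-rooted polynomials applied to both $P_a$ and $P_a'$ (this is where $a_+\ge 2$ is used, so that $P_a''\not\equiv 0$ and $P_a'$ is a nonconstant negative-rooted polynomial). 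That monotonicity step checks out, and the two Rolle applications then give the bound of three with multiplicity. For sharpness, the paper collapses to symmetric parameters $k_1=\dots=k_n$, $c_1=\dots=c_n$, reducing to the univariate four-term polynomial $k_1-c_1x_1+x_1^{a_+}-\alpha x_1^{b_+}$ and invoking attainability of the Descartes bound; you instead perturb from $\alpha=0$, reusing the two-solution construction for the irreversible network of \S\ref{subsec:numberSteady_1} and the degree jump $b_+>a_+$ to manufacture the sign pattern $+,-,+,-$. What each buys: the paper's Descartes-sequence machinery is shorter and is reused verbatim in the mixed-sign case (Proposition \ref{prop:mixedSignBound}), whereas your Newton-type monotonicity computation is more elementary and self-contained but is tied to all factors $y_1+k_i$ having negative roots, so it does not obviously extend to case (ii); your perturbation argument for existence is arguably cleaner than the paper's appeal to sharpness of Descartes' rule, since it leans only on results already proved in \S\ref{sec:OneVarGaleSys}.
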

\begin{proof}  We have that $\sign(i)=1$ for all $i$. Set $k_+=\min(k_i\;|\; i=1,\dots,n)$. 
Let $1\leq \nu\leq n$ be an integer {such that $a_\nu>0$} and consider the sequence of functions \begin{equation}
1, \ y_1+k_{\nu}, \ \gamma(y_1)=\prod_{i=1}^n \left(y_1 +k_i \right)^{a_i}, \  \omega(y_1)=\prod_{i=1}^n \left(y_1 +k_i \right)^{b_i}.\label{eq:seqenceFuncs}
\end{equation} 
{By Remark \ref{remark:SpecificDecFuncSeq} it follows that the sequence of functions \eqref{eq:seqenceFuncs} satisfies Descartes' rule of signs on the open interval $y_1>-k_+$. To see this take $\omega_0=-k_\nu$, $\omega_1=\cdots=\omega_{a_1}=-k_1$, $\omega_{a_1+1}=\cdots=\omega_{a_1+a_2}=-k_2$, and so on (omitting one $\omega_j=-k_i$ when $i=\nu$) until we reach $\omega_{a_+-1}=-k_n$, then set $\omega_{a_+}=\cdots=\omega_{a_++(b_1-a_1)}=-k_1$ and so on until we reach {$\omega_{b_+-1}=-k_n$}. With this choice of $\omega$ the sequence of functions \eqref{eq:seqenceFuncs} is a subsequence of that in \eqref{eq:DescartSeq1} (recall that $b_i>a_i$ for all $i$).}
Rewrite $\widetilde{g}_1$ as $$
\widetilde{g}_1(y_1)=k_{\nu}-  (y_1+k_{\nu} ) +c^{-a}\gamma(y_1) - \alpha c^{-b} \omega(y_1).
$$The sign sequence for $\widetilde{g}_1$ with respect to the sequence of functions \eqref{eq:seqenceFuncs} is $+,-,+,-$. Hence, by Remark \ref{remark:DescarteFuncSeq} we have that $\widetilde{g}_1$ has at most three roots in the interval $(-k_+,\infty)$. In particular $\widetilde{g}_1$ has at most three positive roots. 

{To show that three solutions are possible, set $x_1=\dots=x_n$, $k_{1}=\dots=k_{n}$ and $c_{1}=\dots=c_{n}$ in \eqref{eq:Sys2Simp} and recall that $\sgn(i)=1$ for all $i$, meaning $b_+>a_+$
(also note that $b_i-a_i$ is a fixed value, so we choose each $\k_i$ and $\cc_i$ appropriately to yield $k_{1}=\dots=k_{n}$ and $c_{1}=\dots=c_{n}$). Then  for all $i$,  the equations in \eqref{eq:Sys2Simp} have the form 
$$
{\rm sgn}(i)\left(x_1^{a_1}\cdots x_n^{a_n}-\alpha\, x_1^{b_1}\cdots x_n^{b_n}\right)-c_{i}x_i+k_{i} =x_1^{{a_+}}-\alpha\, x_1^{{b_+}} - c_1 x_1 + k_{1}=0$$
for $i=1,\dots, n$. In particular, all steady state equations \eqref{eq:Sys2Simp} are the same for all $i$ and are equal to 
\begin{equation}
p(x_1)=  k_{1}- c_1 x_1 + x_1^{a_+}-\alpha x_1^{b_+} =0.\label{eq:pxInSec3_1_proof}
\end{equation} }%
 The polynomial $\frac{-p(x_1)}{\alpha}$ is an arbitrary monic polynomial with real coefficients of fixed sign. Since $b_+>a_+$, its Descartes sign sequence is {$+,-,+,-$} {(starting with the constant term)}, hence there must exist values of the coefficients for which Descartes bound of three positive solutions is achieved.
\end{proof}

Let 
\begin{equation}
v(y_1)={\prod_{i=1}^n \left(y_1 +k_i \right)^{a_i} -c^a\cdot y_1}, \;\;\;w(y_1)=\alpha c^{a-b} \prod_{i=1}^n \left(y_1 +k_i \right)^{b_i}.\label{eq:v_w}
\end{equation} 
To find the solutions to \eqref{eq:TwoVarSingvarSys} we seek the points $y_1>0$ where $v$ and $w$ intersect. Note that $$
v(0)=k^a, \;\;\;\; v'(0)={k^a} \sum_{j=1}^n \frac{a_j}{k_{j} }-c^a,\;\;\;\; {\rm and}\;\;
w(0)=\frac{\alpha k^b}{c^{b-a}}.
$$

 \begin{wrapfigure}{r}{0.48\textwidth}\vspace{-0.1in}
  \begin{center}
\begin{tikzpicture}[very thick,scale=0.99, every node/.style={scale=0.83}]
\begin{axis}[
    axis lines = middle,
        ymax=10000,
        xmax=7,
        xmin=-0.05,
        ymin= -1000,
    ticks=none,
]
\addplot [
    domain=-0.05:5.79, 
    samples=150, 
    color=oran,smooth, thick
]
{22*(x+5)^2*(x+1)-1518*x} node[right,scale=1.2] {$v(y_1)$};
\addplot [
    domain=-0.05:5.89,
    samples=150, 
    color=MyCiteColor,smooth, thick]
    {(1/69)*(x+5)^4*(x+1)^2} node[left,scale=1.2] {${w(y_1)}$};
     \addplot[
    color=MyRed,
    only marks,
    mark=|,
    mark size=1.5pt,
    ]
    coordinates {
    (1.311061508,0)
    } node[below,scale=1.2] {$\xi$};
         \addplot[
    color=MyRed,
    only marks,
    mark=|,
    mark size=1.5pt,
    ]
    coordinates {
    (4.1,0)
    } node[below,scale=1.2] {$\sigma$};
    \addplot[
    color=Red2,
    only marks,
    mark=*,
    mark size=1.1pt,
    ]
    coordinates {
    (.9409700891,68.017092)
    };
     \addplot[
    color=Red2,
    only marks,
    mark=*,
    mark size=1.1pt,
    ]
    coordinates {
    (2.595817541,623.798461)
    };
     \addplot[
    color=Red2,
    only marks,
    mark=*,
    mark size=1.1pt,
    ]
    coordinates {
    (5.478251216,7331.987334)
    };

\end{axis}
\end{tikzpicture}  \end{center}\vspace{-0.1in}
  \caption{\footnotesize The situation described in Proposition \ref{prop:v_w_intersect} (iii);  $c_i$ and $k_i$ are chosen so that $v(0)>w(0)$ and $v'(0)<0$, and $\alpha$ is chosen so that $v(\xi)<w(\xi)$ and $v(\sigma)>w(\sigma)$ where $\xi$ is a positive root of $v'$ and $\sigma>\xi$.\label{fig:3SolsSec4}}\vspace{-0.1in}
\end{wrapfigure}
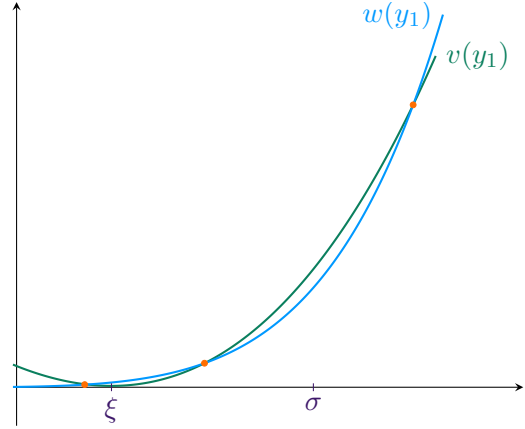 

\begin{proposition}\label{prop:v_w_intersect}
{Suppose that $a_i<b_i$ for all $i$}. Let $v$ and $w$ be as in \eqref{eq:v_w} and consider the solutions to $\widetilde{g}_1(y_1)=0$ for $y_1>0$ with $\widetilde{g}_1$ as in \eqref{eq:TwoVarSingvarSys}. 

\begin{enumerate}[(i)]
\item If {$v(0)\leq w(0)$}, then $\widetilde{g}_1$ has at most two positive roots.
\item If $v(0)>w(0)$, then $\widetilde{g}_1$ has at least one positive root.
\item Suppose that $v(0)>w(0)$ and $v'(0)<0$. Let $\xi$ be the positive root of $v'$ and let $\sigma>\xi$. Then $\widetilde{g}_1$ has exactly three positive roots if $v(\xi)<w(\xi)$ and $v(\sigma)>w(\sigma)$.
\end{enumerate}
\end{proposition}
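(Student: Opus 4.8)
The plan is to reduce the whole statement to a sign analysis of the single polynomial $F(y_1):=v(y_1)-w(y_1)$. Multiplying $\widetilde{g}_1$ from \eqref{eq:TwoVarSingvarSys} by $c^a$ and comparing with \eqref{eq:v_w} gives $c^a\widetilde{g}_1(y_1)=v(y_1)-w(y_1)=F(y_1)$, so, since $c^a>0$, the positive roots of $\widetilde{g}_1$ (with multiplicity) are exactly those of $F$. Two features of $F$ drive everything. First, $F$ has degree $b_+>a_+$ with leading term $-\alpha c^{a-b}y_1^{b_+}$, so $F(y_1)\to-\infty$ as $y_1\to+\infty$. Second, $F(0)=v(0)-w(0)$ is precisely the quantity compared in each hypothesis. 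I would also record that $v'=\bigl(\prod_i(y_1+k_i)^{a_i}\bigr)'-c^a$ is strictly increasing on $(0,\infty)$: by Lemma \ref{lem:Thelemma} the roots of the derivatives of $\prod_i(y_1+k_i)^{a_i}$ are all real and at most $-k_+<0$, so this product is increasing and convex for $y_1>-k_+$. Consequently, when $v'(0)<0$ the equation $v'=0$ has a unique positive solution, which legitimizes the phrase ``the positive root $\xi$ of $v'$'' in (iii).

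For (i) I would pair the bound of Proposition \ref{propn:AtMost3Case1Sec4} --- whose proof in fact shows that $\widetilde{g}_1$ has at most three roots, counted with multiplicity, on the whole interval $(-k_+,\infty)$ --- with a parity argument. If $v(0)<w(0)$ then $F(0)<0$ and $F(+\infty)<0$, so $F$ undergoes an even number of sign changes on $(0,\infty)$, i.e.\ it has an even number of positive roots of odd multiplicity. Each of the three partitions of $3$, namely $3$, $2+1$ and $1+1+1$, contains an \emph{odd} number of odd parts, so three positive roots counted with multiplicity is impossible; hence there are at most two. If instead $v(0)=w(0)$, then $y_1=0$ lies in $(-k_+,\infty)$ and is a root of $\widetilde{g}_1$ of some multiplicity $m\ge1$, so the bound of three on $(-k_+,\infty)$ leaves at most $3-m\le2$ positive roots. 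In both cases (i) holds.

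Part (ii) is then immediate: $v(0)>w(0)$ means $F(0)>0$ while $F(+\infty)<0$, so the intermediate value theorem furnishes a positive root. For (iii) I would read off the signs of $F$ at the ordered points $0<\xi<\sigma$ and at $+\infty$: the hypotheses give $F(0)=v(0)-w(0)>0$, $F(\xi)=v(\xi)-w(\xi)<0$ and $F(\sigma)=v(\sigma)-w(\sigma)>0$, and always $F(+\infty)<0$. The three resulting sign changes, on $(0,\xi)$, on $(\xi,\sigma)$ and on $(\sigma,\infty)$, yield at least three distinct positive roots; since Proposition \ref{propn:AtMost3Case1Sec4} caps the number (with multiplicity) at three, there are exactly three, each of multiplicity one.

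The leading-coefficient and degree check, together with the values $v(0),v'(0),w(0)$ already recorded after \eqref{eq:v_w}, are routine. The step requiring the most care is (i): the parity count must be made airtight, and I expect the genuine subtlety to be the boundary case $v(0)=w(0)$, which should be handled through the root at $y_1=0$ rather than through sign changes. Once the Descartes estimate of Proposition \ref{propn:AtMost3Case1Sec4} is used in its stronger form as a bound on all roots in $(-k_+,\infty)$, both the parity argument and this boundary case fall into line.
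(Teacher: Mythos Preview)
Your proof is correct and follows the same overall strategy as the paper: reduce to the sign analysis of $F=v-w$, invoke the bound from Proposition~\ref{propn:AtMost3Case1Sec4} (in its stronger form on $(-k_+,\infty)$), and locate roots via the intermediate value theorem. Parts (ii) and (iii) match the paper almost verbatim.

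The one place you diverge is part~(i). You split into the strict case $v(0)<w(0)$, handled by a parity count on positive roots of odd multiplicity, and the boundary case $v(0)=w(0)$, handled by the root at $0$ eating one of the three available slots. The paper treats both at once with a single observation you did not use: $v(-k_+)=c^a k_+>0$ while $w(-k_+)=0$, so $F(-k_+)>0$. Combined with $F(0)\le 0$, this forces a root of $F$ in $(-k_+,0]$, leaving at most two in $(0,\infty)$. This is shorter and avoids the case split and the partition analysis entirely. Your parity argument is valid, but since you already need the extended bound on $(-k_+,\infty)$ for the equality case, you may as well exploit the value at $-k_+$ and dispense with the parity step altogether.
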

\noindent\textit{Proof.} Let $k_+=\min(k_i\;|\; i=1,\dots,n)$. By the proof of Proposition \ref{propn:AtMost3Case1Sec4} we know that $v$ and $w$ intersect at most three times in the interval $(-k_+,\infty)$. Also note that $v(-k_+)>0$ and $w(-k_+)=0$. Hence if $v(0)\leq w(0)$, then $v$ and $w$ intersect at least once in $(-k_+,0]$, and hence at most twice in $(0,\infty)$; this proves (i). 

Item (ii) follows immediately from the fact that $w(y_1)$ is larger than $v(y_1)$ for sufficiently large $y_1$. 

Now consider (iii). Suppose that $v(0)>w(0)$ and $v'(0)<0$. Since $v'(\theta)>0$ for $\theta$ large, and since $v'$ has at most one positive real root by Descartes' rule of signs, then $v'$ must have exactly one positive real root $\xi$. If $v(\xi)<w(\xi)$, then $v$ and $w$ intersect at least once in $(0,\xi)$. Similarly if $v(\sigma)>w(\sigma)$ for some $\sigma>\xi$, then $v$ and $w$ must intersect at least once in $(\xi,\sigma)$. Finally we know that for $\zeta$ sufficiently large $v(\zeta)<w(\zeta)$, hence $v$ and $w$ must intersect at least once in the interval $(\sigma,\infty)$. Taken together this implies that $v$ and $w$ intersect exactly once in each of the intervals $(0,\xi)$, $(\xi, \sigma)$ and $(\sigma, \infty)$; this proves (iii). Note that the condition $v'(0)<0$ is necessary since $v'$ is strictly increasing for all positive $y_1$, hence if $v'(0)>0$, then $v'$ has no positive roots. The situation described by (iii) is illustrated in Figure \ref{fig:3SolsSec4}. \qed
\color{black}

\subsubsection{Case (ii)} We now consider the case where either $a_+>1$ and $a_->0$ or $b_->1$ and $b_+>0$ (note we must have $n\geq 2$). 
By computing a cylindrical algebraic decomposition we have found that the maximum number of roots of $\widetilde{g}_1$ in $ (0,k_-)$ is three for the following combinations of vectors $a$ and $b$:

\begin{center}
\begin{tabu}{c|[1pt]ccccccc|cc|}
&  \multicolumn{7}{c|}{$n=2$}  & \multicolumn{2}{c|}{$n=3$}  \\[2pt] \tabucline[1pt]{-} 
$[a_1,\dots,a_n]$ & $ [2,1] $ & $[2,2]$    &  $[2,2]$    &  $[2,2]$    &  $[2,2]$    &  $[2,3]$ &       $[3,1]$   & $[2,1,1]$ & $[2,3,1]$   \\
$[b_1,\dots,b_n]$ &    $[3,0]$ &  $[4,0]$ &  $[4,1]$ &  $[5,1]$ &  $[6,1]$ &  $[3,2]$ &  $[4,0]$  & $[3,0,0]$ & $[3,2,0]$
\end{tabu}
\end{center}

\smallskip
\noindent
We have seen in \S\ref{sec:OneVarGaleSys} that for networks with one non-flow irreversible reaction, increasing $n$ does not change the possible number of positive steady states of the network. In light of this, along with the results of \S\ref{subsubsec:allSignsEqual} and the investigations using cylindrical algebraic decomposition, we conjecture that $\widetilde{g}_1$ admits up to three roots in $(0,k_-)$, see Conjecture \ref{conjecture:atMost3}.

We now give a result which uses Remark \ref{remark:DescarteFuncSeq} to bound the number of roots of $\widetilde{g}_1$ in $(0,k_-)$. 

\begin{proposition}\label{prop:mixedSignBound}
{Let $a_+>0$,} and {let $\widetilde{g}_1$ is as in \eqref{eq:TwoVarSingvarSys}. The number of roots of $\widetilde{g}_1$ in $ (0,k_-)$ is at most $\max(a_-,  b_+ + b_- - a_+ )+2$}. 
\end{proposition}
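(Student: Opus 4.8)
The plan is to apply the function–sequence version of Descartes' rule (Remark \ref{remark:DescarteFuncSeq}), built from Runge's sequence (Remark \ref{remark:SpecificDecFuncSeq}), just as in the proof of Proposition \ref{propn:AtMost3Case1Sec4}, but adapted to the mixed–sign situation. Throughout I restrict to $y_1\in(0,k_-)$, where every factor $\sgn(i)y_1+k_i$ is strictly positive: for $i$ with $\sgn(i)=1$ the root $-k_i$ lies to the left of the interval, while for $i$ with $\sgn(i)=-1$ the root $k_i\ge k_-$ lies to its right. Writing $h_a(y_1)=\prod_i(\sgn(i)y_1+k_i)^{a_i}$ and $h_b(y_1)=\prod_i(\sgn(i)y_1+k_i)^{b_i}$, the object to analyze is the three–term expression $\widetilde g_1=c^{-a}h_a-\alpha c^{-b}h_b-y_1$.

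First I would divide $\widetilde g_1$ by $\prod_{\sgn(i)=1}(y_1+k_i)^{a_i}$, which is positive on $(0,k_-)$ and so does not change the number of roots there. This turns the first product into $\prod_{\sgn(i)=-1}(k_i-y_1)^{a_i}$, a polynomial of degree $a_-$, and the second into $\prod_{\sgn(i)=-1}(k_i-y_1)^{b_i}\prod_{\sgn(i)=1}(y_1+k_i)^{b_i-a_i}$, a polynomial of degree $b_-+(b_+-a_+)=b_++b_--a_+$; the term $-y_1$ becomes $-y_1/\prod_{\sgn(i)=1}(y_1+k_i)^{a_i}$. The two polynomial degrees produced here are exactly the two entries of the claimed maximum, which is the key structural observation. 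I would then assemble a Descartes system on $(0,k_-)$ by refining the larger of these two products into a Runge chain $1,\pi_1,\pi_2,\dots$ of partial products of degrees $0,1,\dots,\max(a_-,\,b_++b_--a_+)$, and incorporating the linear term. Expanding $\widetilde g_1$ in this system, the chain accounts for the two products while the constant term and the linear term contribute at most two further sign variations, so Remark \ref{remark:DescarteFuncSeq} bounds the number of roots in $(0,k_-)$ by $\max(a_-,\,b_++b_--a_+)+2$.

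The main obstacle is that, unlike in Proposition \ref{propn:AtMost3Case1Sec4} where $h_a\mid h_b$ so both products sit on a single Runge chain, here $h_a\nmid h_b$ (their greatest common divisor is $\prod_{\sgn(i)=1}(y_1+k_i)^{a_i}\prod_{\sgn(i)=-1}(k_i-y_1)^{b_i}$), and the two reduced products carry factors whose roots lie on opposite sides of $(0,k_-)$. Consequently the naive sequence of partial products is \emph{not} a Descartes system on the interior interval $(0,k_-)$: a direct check shows that partial products mixing a left root and a right root need not satisfy the sign–change count there. The clean way to repair this, which I would carry out, is the Möbius substitution $y_1=k_-t/(1+t)$ sending $(0,k_-)$ bijectively onto $t\in(0,\infty)$; it moves the root of every factor to $t\le 0$, so after clearing denominators by a function that is positive on $(0,\infty)$ one obtains genuine polynomials all of whose roots are nonpositive, for which a single Runge chain is a legitimate Descartes system on $(0,\infty)$. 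The remaining—and most delicate—step is the sign–change bookkeeping: since the two reduced products still fail to divide one another, one of them must be expanded inside the chain built around the other, and checking that this expansion together with the linear term produces at most $\max(a_-,\,b_++b_--a_+)+2$ sign variations is where the bulk of the care lies.
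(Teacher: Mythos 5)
Your plan correctly isolates the two key degrees $a_-$ and $b_++b_--a_+$ and correctly diagnoses that a naive Runge chain fails because the factors $(k_i-y_1)$ with $\sgn(i)=-1$ have roots to the \emph{right} of $(0,k_-)$. But the proposal stops exactly where the proof has to happen. After you divide by $\prod_{\sgn(i)=1}(y_1+k_i)^{a_i}$, the term $-y_1$ becomes the non-polynomial $-y_1/\prod_{\sgn(i)=1}(y_1+k_i)^{a_i}$, and you never say how this function is to be fitted into a polynomial Runge chain (after the M\"obius substitution it is still a ratio of polynomials, and clearing its denominator undoes the division). More seriously, the final step --- ``expanding one product inside the chain built around the other'' and verifying that the result has at most $\max(a_-,b_++b_--a_+)+2$ sign variations --- is the entire content of the proposition, and you explicitly leave it unverified. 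As written, this is a plausible strategy with two unresolved obstacles, not a proof.

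The paper's argument resolves the difficulty without dividing and without any change of variable. Set $P(y_1)=\prod_{\sgn(i)=1}(y_1+k_i)^{a_i}$ and factor the two product terms of $\widetilde{g}_1$ as $P(y_1)\,\gamma(y_1)$, where
$$\gamma(y_1)=c^{-a}\prod_{\sgn(i)=-1}(-y_1+k_i)^{a_i}-\alpha c^{-b}\prod_{\sgn(i)=1}(y_1+k_i)^{b_i-a_i}\prod_{\sgn(i)=-1}(-y_1+k_i)^{b_i}$$
is an honest polynomial of degree $\Omega=\max(a_-,b_++b_--a_+)$; all factors with roots in $[k_-,\infty)$ are absorbed into $\gamma$ and then \emph{fully expanded} into its coefficients $\gamma_0,\dots,\gamma_\Omega$, so they never need to appear as factors of any function in the Descartes sequence. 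Writing $-y_1=k_\nu-(y_1+k_\nu)$ for an index $\nu$ with $\sgn(\nu)=1$, $a_\nu>0$, one gets $\widetilde{g}_1$ as a combination of the $\Omega+3$ functions $1,\ y_1+k_\nu,\ P,\ y_1P,\dots,y_1^\Omega P$, which form a subsequence of a Runge chain with all roots $\le 0$ and hence satisfy Descartes' rule on all of $(0,\infty)$; the coefficient sequence $k_\nu,-1,\gamma_0,\dots,\gamma_\Omega$ has at most $\Omega+2$ sign changes regardless of the signs of the $\gamma_j$, which is the whole bookkeeping. If you want to salvage your route, the fix is to adopt this grouping: do not divide, keep $-y_1$ as $k_\nu-(y_1+k_\nu)$, and note that expanding a single degree-$\Omega$ polynomial in the monomial multiples of $P$ costs at most $\Omega$ sign changes --- at which point the M\"obius substitution becomes unnecessary.
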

\begin{proof}
Since $a_+>0$, there exists an index $\nu$ such that $a_\nu>0$.
Let $\Omega=\max(a_-,  b_+ + b_- - a_+ )$. 
By Remark \ref{remark:SpecificDecFuncSeq} it follows that the sequence of $\Omega+3$ functions 
\begin{equation}
1,\ y_1+k_\nu,\  \prod_{\sign(i)=1} \left(y_1 +k_{i} \right)^{a_i}, \ y_1\prod_{\sign(i)=1} \left(y_1 +k_{i} \right)^{a_i}, \  \dots,  y_1^{\Omega}\prod_{\sign(i)=1} \left(y_1 +k_i \right)^{a_i}\label{eq:MixedSignDescarteFuncs}
\end{equation}
satisfies Descartes' rule of signs on for $y_1>0$. Let $$
\gamma(y_1)=c^{-a}\prod_{\sign(i)=-1} \left(-y_1 +k_i \right)^{a_i} - \alpha c^{-b} \prod_{\sign(i)=1} \left(y_1 +k_i \right)^{b_i-a_i}\prod_{\sign(i)=-1} \left(-y_1 +k_i \right)^{b_i} ,
$$ and note that $\deg(\gamma)=\Omega$. With this notation $\widetilde{g}_1(y_1)=\prod_{\sign(i)=1} \left(y_1 +k_i \right)^{a_i} \gamma(y_1)-  (y_1+k_\nu)+k_\nu$. Expanding $\gamma$ we see that  
the polynomial $ \widetilde{g}_1$ has at most $\Omega+2$ sign changes when written in terms of the sequence of $\Omega+3$ functions \eqref{eq:MixedSignDescarteFuncs}. 
\end{proof}

{By symmetry, we conclude that network \eqref{eq:myreaction2} admits at most 
$$ \min ( \max(a_-,  b_+ + b_- - a_+ ),\max(b_+,  a_++a_- - b_- )  )+2 $$
positive non-degenerate steady states.}

\begin{remark}
By \cite[Theorem~1]{bates2007bounds} the number of non-zero real solutions to the system \eqref{eq:Sys2Simp} is no more than $n^2 (e^4+3)$. Alternatively, by the Bernstein-Kushnirenko theorem, the number of non-zero real solutions to \eqref{eq:Sys2Simp} is no more that $n! {\rm vol}({\rm conv}(W))$, where ${\rm vol}({\rm conv}(W))$ denotes the Euclidean volume of the convex hull of the columns of the matrix $W$ in \eqref{eq:System2CWMat}. These bounds are both much larger than the bound of three for the case (i) explored in \S\ref{subsubsec:allSignsEqual} and are also often larger than the bound of Proposition \ref{prop:mixedSignBound}. For example, consider the case where $n=4$, $a=[2,3,3,6]$, and $b=[1,2,4,7]$. In this case Proposition \ref{prop:mixedSignBound} tells us that there are at most $ \max(a_-,  b_+ + b_- - a_+ )+2=5+2=7$ positive solutions to \eqref{eq:Sys2Simp}, on the other hand $4^2 (e^4+3)\simeq 876.57$, and $4! {\rm vol}({\rm conv}(W))=40$.
\end{remark}

\subsection{Stability}\label{subsec:stab_steady_2}
In this section we show that if a network of the type \eqref{eq:myreaction2} admits three positive steady states, then it displays bistability. We start with a lemma on bistability. 

\begin{lemma}\label{prop:stability4}
Suppose that either $n=1$ and $1<a_1<b_1$, or $n=2$ and $a_1=1,a_2=1$ $\sign(1)=\sign(2)=1$. 
If  the network \eqref{eq:myreaction2} has three positive steady states, then two of them are asymptotically stable  and one is unstable.
\end{lemma}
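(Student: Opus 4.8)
The plan is to handle the two cases in the spirit of Propositions \ref{prop:stability} and \ref{prop:stability3}, reducing each stability question to the sign of a single derivative. First, a preliminary remark that applies to both cases: here $a_+>1$ and $a_-=0$, so Proposition \ref{propn:AtMost3Case1Sec4} guarantees at most three positive steady states counted with multiplicity. Hence if three \emph{distinct} positive steady states exist they must each have multiplicity one, and by the multiplicity-preserving correspondence of Proposition \ref{Prop:GaleDual2} (together with the remark following it) they are non-degenerate; in particular the Jacobian $J$ is nonsingular at each of them. This is what lets me argue via sign changes.

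For $n=1$ the ODE is $\dot x_1=f(x_1)$ with $f(x_1)=(b_1-a_1)(\ell_1 x_1^{a_1}-\ell_2 x_1^{b_1})-\cc_1x_1+\k_1$. Since $f(0)=\k_1>0$ and $f(x_1)\to-\infty$ as $x_1\to\infty$ (the $-\ell_2 x_1^{b_1}$ term dominates because $b_1>a_1>1$), three simple positive roots are crossed with alternating signs of $f'$, namely $-,+,-$. As the Jacobian is just $f'$, the smallest and largest steady states are asymptotically stable and the middle one is unstable.

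For $n=2$, write $g(x)=\ell_1 x_1x_2-\ell_2 x_1^{b_1}x_2^{b_2}$ and $\mu_i=b_i-1>0$, so that $f_i=\mu_i g-\cc_i x_i+\k_i$. I would form the combination $\mu_2 f_1-\mu_1 f_2$, in which the $g$-terms cancel, to express $x_2=\phi(x_1)$ as an increasing linear function along the steady-state locus, and substitute it to obtain a single-variable function $F(x_1)=f_1(x_1,\phi(x_1))$ whose positive roots (with $\phi(x_1)>0$) are in bijection with the positive steady states. A direct differentiation, exactly as in Proposition \ref{prop:stability3}, gives the identity $\cc_2 F'(x_1)=-\det J(x)$ at any steady state, so $\sgn(\det J)=-\sgn(F')$. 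Checking that $F$ is positive at the left endpoint of the feasible interval ($F(0)=\k_1>0$) and tends to $-\infty$, the three simple roots are crossed with $F'$-signs $-,+,-$, whence $\det J$ has signs $+,-,+$ along the increasing order of steady states.

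The crux is to show that $\tr J<0$ at \emph{every} positive steady state, so that $\det J>0$ forces asymptotic stability and $\det J<0$ forces instability, with no additional hypothesis such as the $\cc_1<\cc_2$ needed in Proposition \ref{prop:stability3}. Here the key observation is that the steady-state relation $\mu_i g=\cc_i x_i-\k_i$ lets me eliminate the degree-one monomial from the diagonal Jacobian entries: dividing the $i$-th relation by $x_i$ and substituting into $J_{ii}=\mu_i\partial_{x_i}g-\cc_i$ yields
\[
J_{ii}=-\frac{\k_i}{x_i}-\mu_i^2\,\ell_2\,\frac{x_1^{b_1}x_2^{b_2}}{x_i},
\]
which is manifestly negative, so $\tr J=J_{11}+J_{22}<0$ unconditionally. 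Combining the two facts, the smallest and largest steady states ($\det J>0$, $\tr J<0$) are asymptotically stable, while the middle one ($\det J<0$) is a saddle and hence unstable, giving exactly two stable and one unstable as claimed. I expect this trace computation to be the only real point of the proof; everything else is a routine adaptation of the arguments for the irreversible case.
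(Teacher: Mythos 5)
Your proof is correct and follows essentially the same route as the paper's: reduce to a one-variable function $F$ along the steady-state line $x_2=\phi(x_1)$, use the identity $\cc_2F'=-\det J$ to read off the sign pattern $+,-,+$ of $\det J$ at the three simple roots, and show $\tr J<0$ unconditionally from the steady-state relations (the paper does the trace computation by evaluating $x_1x_2\tr J$ directly, which yields exactly your two negative diagonal contributions $-\k_ix_2$ and $-\mu_i^2\ell_2 x_1^{b_1}x_2^{b_2}\cdot x_j$). Your entry-by-entry elimination of $J_{ii}$ is a slightly cleaner packaging of the same substitution, and your observation that no hypothesis like $\cc_1<\cc_2$ is needed here matches the paper.
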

\begin{proof}
First consider the case $n=1$. The ODE system of the network \eqref{eq:myreaction2} is 
\begin{align*}
\frac{dx_1}{dt} &=f(x_1),\qquad\textrm{with } f(x_1)=  (b_1-a_1)\ell_1 x_1^{a_1} + (a_1-b_1) \ell_2 x_1^{b_1} - \cc_{1}x_1+\k_{1}.
\end{align*}
Since  $f(0)>0$, if the network has three positive steady states (which must be of multiplicity one), then the smallest and largest steady states satisfy $f'(x_1^*)<0$ and are asymptotically stable, and the middle one satisfies $f'(x_1^*)>0$ and is unstable. 

Now consider the case where $n=2$, $a_1=1$ and $ a_2=1$. 
The ODE system is of the form
\begin{align*}
\frac{dx_i}{dt} &=\alpha_i\ell_1  x_1x_2-\alpha_i\ell_2  x_1^{b_1}x_2^{b_2} - \cc_{i}x_i+\k_{i}, \;\;\; {\rm for \;}i=1,2
\end{align*}
where $\alpha_i=b_i-1>0$. 
The associated Jacobian matrix is
$$J(x) =   \begin{bmatrix}
 \alpha_1 \ell_1 x_2 - b_1\alpha_1 \ell_2  x_1^{b_1-1}x_2^{b_2}  - \cc_{1}  &   \alpha_1 \ell_1 x_1 - b_2\alpha_1\ell_2  x_1^{b_1}x_2^{b_2-1}  \\[6pt]
\alpha_2\ell_1x_2-  b_1\alpha_2 \ell_2 x_1^{b_1-1}x_2^{b_2}  &  \alpha_2\ell_1 x_1   - b_2\alpha_2\ell_2 x_1^{b_1}x_2^{b_2-1}   - \cc_{2}
\end{bmatrix}.$$
We 
have
\begin{align*}
\det(J(x)) &= - \cc_{2} ( \alpha_1 \ell_1 x_2 - b_1\alpha_1\ell_2  x_1^{b_1-1}x_2^{b_2} ) - \cc_{1}( \alpha_2 \ell_1 x_1  - b_2\alpha_2\ell_2 x_1^{b_1}x_2^{b_2-1}  ) + \cc_{1} \cc_{2}, \\
\tr(J(x)) &=   (  \alpha_1 \ell_1 x_2 - b_1\alpha_1\ell_2  x_1^{b_1-1}x_2^{b_2} )  +(\alpha_2 \ell_1 x_1   - b_2\alpha_2\ell_2 x_1^{b_1}x_2^{b_2-1}  ) - \cc_{1}- \cc_{2}.
\end{align*}
A steady state of \eqref{eq:myreaction2} satisfies the equation
$
x_2 =\frac {\alpha_2\cc_{1}x_1 - \alpha_2 \k_{1} + \alpha_1\k_{{2}}}{\alpha_{{1}}\cc_{2}},
$
and $f(x_1)=0$ where \small
\begin{equation}\label{eq:x2A}
 f(x_1)=\alpha_1\ell_1 x_1 \left( \frac {\alpha_2\cc_{1}x_1- \alpha_2 \k_{1} + \alpha_1\k_{{2}}}{\alpha_{{1}}\cc_{2}}  \right)  - \alpha_1 \ell_2 x_1^{b_1}  \left( \frac {\alpha_2\cc_{1}x_1- \alpha_2 \k_1 + \alpha_1\k_{{2}}}{\alpha_{{1}}\cc_{2}}  \right) ^{\hspace{-1mm}b_{{2}}}     -\cc_{1}x_{{1}}+\k_{{1}}.
 \end{equation}\normalsize
If the network has  three positive steady states of multiplicity one, then two of them satisfy $f'(x_1)<0$ and the other satisfies $f'(x_1)>0$. 
Therefore, it is enough to show that if $f'(x_1)<0$, then the steady state is asymptotically stable, and if $f'(x_1)>0$, then the steady state is unstable. Let $x=(x_1,x_2)$ be a positive steady state.
Using \eqref{eq:x2A}, the derivative of $f$ at $x_1$ is  
\begin{align*}
f'(x_1) & =\alpha_1\ell_1 x_2 +  \alpha_2\ell_1  x_1 \tfrac{\cc_{1}}{ \cc_{2}} 
 -b_1\alpha_1\ell_2 x_1^{b_1-1} x_2^{b_2} - b_2 \alpha_2\ell_2 x_1^{b_1} x_2^{b_2-1} \tfrac{\cc_{1}}{ \cc_{2}} 
 -\cc_{1} = \tfrac{-\det(J(x))}{\cc_{2}}.
\end{align*}
 It follows that if $f'(x_1)>0$, then  $\det(J(x))<0$ and the steady state is unstable. If $f'(x_1)<0$, then $\det(J(x))>0$. In order to show that the steady state is asymptotically stable, we need to show that $\tr(J(x))<0$.
From $\tfrac{dx_1}{dt}=\tfrac{dx_2}{dt}=0$ we obtain
$$ \alpha_1\ell_1  x_1x_2 = \alpha_1\ell_2  x_1^{b_1}x_2^{b_2} + \cc_{1}x_1-\k_{1}, \quad 
\alpha_2\ell_1  x_1x_2 =  \alpha_2\ell_2  x_1^{b_1}x_2^{b_2} +  \cc_{2}x_2-\k_{2}, $$
which gives
\begin{align*}
x_1x_2\tr(J(x)) &=     x_2( \alpha_1\ell_2  x_1^{b_1}x_2^{b_2} + \cc_{1}x_1-\k_{1})  - b_1\alpha_1\ell_2  x_1^{b_1}x_2^{b_2+1}   \\ & +  x_1 (  \alpha_2\ell_2  x_1^{b_1}x_2^{b_2} +  \cc_{2}x_2-\k_{2}) - b_2\alpha_2\ell_2 x_1^{b_1+1}x_2^{b_2}   - (\cc_{1}+ \cc_{2})x_1x_2 \\ 
&= -\k_{1}x_2 +(1 - b_1)\alpha_1\ell_2  x_1^{b_1}x_2^{b_2+1}   -\k_{2}x_1 +(1- b_2)\alpha_2\ell_2 x_1^{b_1+1}x_2^{b_2}  <0.\qedhere
\end{align*}
\end{proof}

\begin{theorem}
 If $a_+>1$ or $b_->1$, then  there exists a choice of parameters such that the network \eqref{eq:myreaction2} has two asymptotically stable positive steady states and one unstable positive steady state. 
 \end{theorem}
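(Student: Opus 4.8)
The plan is to reduce the statement to the two base cases already settled in Lemma~\ref{prop:stability4} and then transfer the resulting configuration to the full network through the embedding result of Proposition~\ref{prop:embedded}. First I would exploit the symmetry noted after \eqref{eq:simpGale_y1}: interchanging the roles of $a$ and $b$ (and replacing $\alpha$ by $\alpha^{-1}$) flips every $\sign(i)$ and therefore interchanges $a_+\leftrightarrow b_-$ and $a_-\leftrightarrow b_+$. Since the network \eqref{eq:myreaction2} is itself invariant under exchanging the two directions of the reversible non-flow reaction (relabel $\ell_1\leftrightarrow\ell_2$ together with $a\leftrightarrow b$), the hypothesis ``$a_+>1$ or $b_->1$'' is symmetric, and it suffices to treat the case $a_+>1$; the case $b_->1$ follows verbatim after this relabelling.

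Assuming $a_+>1$, I would split into two subcases according to whether some $a_i$ with $\sign(i)=1$ exceeds $1$. In the first subcase, suppose there is an index $i$ with $\sign(i)=1$ and $a_i>1$, so that $1<a_i<b_i$. Consider the embedded network $N'$ of \eqref{eq:myreaction2} on the single species $\{X_i\}$: the restricted reversible reaction still has distinct ends (because $a_i\neq b_i$) and $X_i$ keeps its inflow and outflow, so $N'$ is again of type \eqref{eq:myreaction2} with $n=1$ and $1<a_i<b_i$. By Proposition~\ref{propn:AtMost3Case1Sec4} (applied with $a_+=a_i>1$ and $a_-=0$) there is a choice of rate constants for which $\widetilde{g}_1$ has three distinct, hence simple, positive roots, equivalently three positive non-degenerate steady states of $N'$. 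Lemma~\ref{prop:stability4} then guarantees that two of these are asymptotically stable and one unstable, and Proposition~\ref{prop:embedded} lifts this to \eqref{eq:myreaction2}, yielding rate constants for which the full network has two asymptotically stable and one unstable positive steady state.

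In the second subcase every $a_i$ with $\sign(i)=1$ satisfies $a_i\leq 1$; since $a_+=\sum_{\sign(i)=1}a_i>1$ is an integer, at least two such indices, say $1$ and $2$, have $a_1=a_2=1$ and $\sign(1)=\sign(2)=1$ (after discarding indices with $a_i=0$, which is permissible by Remark~\ref{remark:a_i_pos}). I would then take the embedded network $N'$ on $\{X_1,X_2\}$, which is of type \eqref{eq:myreaction2} with $n=2$, $a_1=a_2=1$, and both signs positive, so that $a_+=2>1$ and $a_-=0$. As before, Proposition~\ref{propn:AtMost3Case1Sec4} supplies rate constants for which $N'$ has three positive non-degenerate steady states, the $n=2$ case of Lemma~\ref{prop:stability4} shows two are asymptotically stable and one unstable, and Proposition~\ref{prop:embedded} transfers the bistable configuration to \eqref{eq:myreaction2}.

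Because the argument is largely an assembly of results already established, I expect no deep obstacle. The points requiring care are purely bookkeeping: verifying that each embedded network is genuinely of the form \eqref{eq:myreaction2} so that Propositions~\ref{propn:AtMost3Case1Sec4} and~\ref{prop:embedded} and Lemma~\ref{prop:stability4} all apply, and checking that the three roots produced by Proposition~\ref{propn:AtMost3Case1Sec4} are \emph{distinct}, so that the corresponding steady states are non-degenerate---this non-degeneracy is exactly what is needed both to invoke Lemma~\ref{prop:stability4} and to apply the lifting in Proposition~\ref{prop:embedded}. Notably, unlike the proof of Theorem~\ref{thm:stability1}(iii), no analogue of the extending Lemma~\ref{lemma:extending} is required here, since Lemma~\ref{prop:stability4} already handles the two-species configuration directly.
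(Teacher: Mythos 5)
Your argument is correct, and in one branch it genuinely diverges from the paper. The paper splits on whether both signs occur: when $a_+>1$ and $a_->0$ (or $b_->1$ and $b_+>0$) it invokes Proposition~\ref{prop:subnets} to pass from the irreversible network, whose bistability was established in Theorem~\ref{thm:stability1}(iii) via the Routh--Hurwitz analysis and the extending Lemma~\ref{lemma:extending}; only in the remaining uniform-sign case does it use the embedded-network route through Lemma~\ref{prop:stability4} and Proposition~\ref{propn:AtMost3Case1Sec4}, exactly as you do. You instead run the embedded-network argument uniformly: in the mixed-sign case you simply discard the negatively signed species, and the resulting embedded network (one species with $1<a_i<b_i$, or two species with $a_1=a_2=1$ and both signs positive) is again of type \eqref{eq:myreaction2}, still falls under Proposition~\ref{propn:AtMost3Case1Sec4} and Lemma~\ref{prop:stability4}, and lifts by Proposition~\ref{prop:embedded}. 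This is a legitimate simplification of the dependency structure: your proof never touches Proposition~\ref{prop:subnets} or the comparatively heavy Theorem~\ref{thm:stability1}(iii), at the cost of nothing, since Lemma~\ref{prop:stability4} and Proposition~\ref{propn:AtMost3Case1Sec4} are needed for the uniform-sign case anyway. Your bookkeeping is also sound on the two points that matter: the restricted reaction has distinct ends because $a_i\neq b_i$ for every retained species, and the three roots supplied by Proposition~\ref{propn:AtMost3Case1Sec4} are distinct and hence of multiplicity one (the count with multiplicity being capped at three), which is exactly the non-degeneracy required both for Lemma~\ref{prop:stability4} and for the lifting in Proposition~\ref{prop:embedded}.
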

\begin{proof}
If either $a_+>1$ and $a_->0$ or $b_->1$ and $b_+>0$, then the statement follows from Proposition \ref{prop:subnets} and Theorem \ref{thm:stability1}.
Otherwise the network has an embedded network of one of the types in Lemma \ref{prop:stability4}. Hence the statement follows from Propositions \ref{prop:embedded} and \ref{propn:AtMost3Case1Sec4}.\qedhere

\end{proof}

\appendix
\section{More on Gale dual systems}\label{app:galedual}

Let $\mathcal{V}$ be the zero dimensional subscheme of $(\RR_{>0})^{n}$ defined by the system of $n$ Laurent polynomials 
\begin{equation}\label{eq:compInt}
 Cx^W= \begin{bmatrix}
f_1(x_1,\dots,x_{n}) \\
\vdots\\
f_n(x_1,\dots,x_{n})
\end{bmatrix}  =0,
\end{equation} with $W$ and $C$ having Gale dual matrices $Q$ and $D$, respectively, chosen as in \S\ref{sec:galedual}. Recall that $W$ and $C$ are $n\times (n+l+1)$ matrices while $Q$ and $D$ are $(n+l+1)\times (l+1)$ matrices. 
We can now define a homomorphism of algebraic groups specifed by the monomial map determined by the exponents of \eqref{eq:compInt}\begin{align*}
&\varphi_{W} \colon (\RR_{>0})^{n} \to  (\RR_{>0})^{n+l} \times \{1\}  \subset \PP_{\RR}^{n+l}\\
&\varphi_{W}\colon x\mapsto x^W= \left[ x^{w_1}:\dots:x^{w_{n+l}}:1\right]^T.
\end{align*} The homomorphism $\varphi_{W}$ is dual to the homomorphism of free abelian groups $\iota_{W}\colon \ZZ^{l+n}\to \ZZ^n$ which maps the standard $i^{th}$ basis vector of $\ZZ^{l+n}$ to the column vector $w_i$. Let $[z_1:\cdots:z_{n+l+1}]$ be coordinates for $\PP_{\RR}^{n+l}$, then the polynomials $f_i$ are the pullbacks of linear forms $\Lambda_i$ under the monomial map $\varphi_{W} $, that is\begin{equation}
f_i=\varphi_{W}^*(\Lambda_i),\qquad \textrm{where}\quad \Lambda_i=\sum_{j=1}^{n+l+1} c_{i,j}z_j.\label{eq:Lambda_def}
\end{equation}
The scheme $\mathcal{V}$ defined by \eqref{eq:compInt} is the pullback of the linear space $L=V(\Lambda_1,\dots,\Lambda_n) \subset \PP_{\RR}^{n+l}$. Let $\ZZ W$ denote the integer lattice spanned by the columns of $W$. Since $\ZZ W=\ZZ^{n}$ we have that the intersection $Y=L\cap \varphi_{W}((\RR_{>0})^{n} )$ is proper (that is the intersection has the expected dimension) and the map $\varphi_{W}$ defines a scheme theoretic isomorphism between $\mathcal{V}$ and $Y$, see, for example \cite[Proposition~1.1]{bihan2008gale}.

Define the map $\psi_{\mathcal{V}}\colon \RR^l\to \PP_{\RR}^{l+n}$ given by $
 \psi_{\mathcal{V}}(y) = [d_1(y): \cdots : d_{n+l}(y):1]
$ where the $d_i(y)$ are the linear forms in $\R[y_1,\dots,y_{l}]$ defined by the rows of $D\cdot [y_1,\dots,y_{l},1]^T$ as in \eqref{eq:di}. Note that by construction $\psi_{\mathcal{V}}$ is an isomorphism from $\RR^l$ to the linear subspace $L$ of $\PP^{n+l}$. Hence we have an isomorphism of schemes given by $\psi_{\mathcal{V}}^{-1}\circ \varphi_W $ so that $
\mathcal{V}\cong Z=\psi_{\mathcal{V}}^{-1}(\varphi_W(\mathcal{V})) \subset \RR^l.$ The resulting isomorphic scheme $Z$ is referred to as the \textit{Gale dual scheme} of $\mathcal{V}$. 

We now give the equations which describe $Z$. 
Every integer linear relation, $ \sum_i \beta^{(i)} w_i=0$ with $\beta^{(i)}\in \ZZ$, among column vectors $w_i$ in $W$, corresponds to the Laurent monomial equality $
\prod_{i=1}^{n+l} z_i^{\beta^{(i)}}=1
$ on $\varphi_{W}((\RR_{>0})^{n} ) \subset \PP_{\RR}^{l+n}$, here we have that $z_i>0$. Pulling this relation back under the map $\psi_{\mathcal{V}}$ gives the relation \begin{equation}
\prod_{i=1}^{n+l} d_i(y)^{\beta^{(i)}}=1 \;\;\;{\rm in} \;\; \RR[y_1,\dots, y_l]\;\;\; {\rm with}\; \; d_i(y)>0.\label{eq:pullback_mon_eq}
\end{equation}There is one such relation \eqref{eq:pullback_mon_eq} for each row of $Q$ giving the following system of $l$ equations in $\RR[y_1,\dots,y_l]$ \begin{equation} \label{eq:GaleDualSys}
\prod_{i=1}^{n+l} d_i(y)^{q_{i,1}} = 1 , \dots, \prod_{i=1}^{n+l} d_i(y)^{q_{i,l}} = 1 ,\;\;\; {\rm such\; that}\; \; d_i(y)>0.
\end{equation} 
By construction $Z\subset (\RR_{>0})^l$ is the set of solutions to the system of equations \eqref{eq:GaleDualSys} and $Z\cong \mathcal{V}$ as schemes. The system \eqref{eq:GaleDualSys} is referred to as the \textit{Gale dual system} of the original system \eqref{eq:compInt}. Hence the one-to-one correspondence in Proposition \ref{Prop:GaleDual2} is an isomorphism of schemes.

\bigskip \bigskip

\begin{small}

\noindent
{\bf Acknowledgements.}
This work was partially funded by the Independent Research Fund of Denmark.
\end{small}
\bigskip 
\scriptsize

\bibliography{ref}
\bibliographystyle{alpha}
\end{document}